\let\NAT@parse\undefined
\newcommand{\ie}{\emph{i.e.},~}
\newcommand{\eg}{\emph{e.g.},~}
\newcommand{\cf}{\emph{c.f.}~}
\newcommand{\sgn}{\mathrm{sgn}}
\newcommand{\supp}{\mathrm{supp}}
\newcommand{\dist}{\mathrm{dist}}
\newcommand{\Sgn}{\mathrm{Sgn}}
\newcommand{\ve}{\boldsymbol}
\renewcommand{\L}{{\mathcal L}}
\numberwithin{equation}{section}
\theoremstyle{plain}
\newtheorem{thm}{Theorem}[section]
\newtheorem{lem}{Lemma}[section]
\newtheorem{cor}{Corollary}[section]
\newtheorem{defi}{Definition}[section]
\newtheorem{remm}{Remark}[section]
\begin{document}

\begin{frontmatter}
\title{On change point detection using the fused lasso method\protect\thanksref{T1}}
\runtitle{On change point detection}
\thankstext{T1}{This work was partially supported by the Swedish Research Council and the Linnaeus Center ACCESS at KTH. The research leading to these results has received funding from The European Research Council under the European Community's  Seventh Framework program (FP7 2007-2013) / ERC Grant Agreement N. 267381.}%

\begin{aug}
\author{\fnms{Cristian R.} \snm{Rojas}\ead[label=e1]{crro@kth.se}}
\and
\author{\fnms{Bo} \snm{Wahlberg}\ead[label=e2]{bo@kth.se}\ead[label=u1,url]{http://www.ee.kth.se}}
\runauthor{Rojas and Wahlberg}
\affiliation{KTH Royal Institute of Technology}
\address{Department of Automatic Control and ACCESS Linnaeus Center\\
School of Electrical Engineering\\
KTH Royal Institute of Technology\\
SE 100 44 Stockholm,Sweden\\
\printead{e1}
\phantom{E-mails:\ } 
\printead*{e2} 
}
\end{aug}

\begin{abstract}
In this paper we analyze the asymptotic properties of $\ell_1$ penalized maximum likelihood estimation of signals with piece-wise constant mean values and/or variances. The focus is on segmentation of a non-stationary time series with respect to changes in these model parameters.  This change point detection and estimation problem is also referred to as total variation denoising or  $\ell_1$ -mean filtering and has many important applications in most fields of science and engineering. We establish the (approximate) sparse consistency properties, including rate of convergence, of the so-called fused lasso signal approximator (FLSA). We show that this only holds if the sign of the corresponding consecutive changes are all different, and that this estimator is  otherwise incapable of correctly detecting the underlying sparsity pattern.
The key idea is to notice that the optimality conditions for this problem can be analyzed using techniques related to brownian bridge theory.
\end{abstract}

\begin{keyword}[class=AMS]
\kwd[Primary ]{62G08}
\kwd{62G20}
\end{keyword}
\begin{keyword}
\kwd{Fused Lasso, TV denoising, Fused Lasso Signal Approximator, change point detection.}
\end{keyword}
\end{frontmatter}

\section{Introduction}
Methods for estimating the mean, trend or variance of a stochastic process from time series data have applications in almost all areas of science  and engineering. For non-stationary data it is also important to detect abrupt changes in these parameters and to be able to segment the data into the corresponding stationary subsets. Such applications include failure detection and fault diagnosis. An important case is noise removal from a piecewise constant signal, for which there are a wide range of proposed denoising methods.  This problem is also known as step detection, and is a special case of change point detection. We refer to \cite{citeulike:9394683,citeulike:9394684} for a recent survey of such methods, including the method of one dimensional total variation (TV) denoising to be analyzed in the current paper. TV denoising was introduced in \cite{Rudin_1992}, and is closely related to the fused lasso method/signal approximator, \cite{citeulike:28022,jcgs.2010.09208}, the generalized lasso method, \cite{Tibshirani-Taylor-11}, and basis pursuit denoising with a heaviside dictionary, \cite{chen01}. The idea is use $\ell_1$ norm regularization to promote sparseness. Detection of changes in trends using this framework has recently been studied in the paper \cite{Kim-Koh-Boyd-Gorinevsky-09}, where $\ell_1$ trend filtering was introduced. This is an extension of the Hodrick-Prescott filter, \cite{citeulike:11033787}. We will analyze in detail the corresponding $\ell_1$ mean filtering algorithm.

The literature on $\ell_1$ regularized estimation methods in statistics is vast, and we have only given some relevant snap-shots of important references. We refer to the papers above for a more complete bibliography. There are several textbooks covering this topic, \eg \cite{Buhlmann-vandeGeer-11,citeulike:161814}.

In this paper we analyze the asymptotic properties of the fused lasso method/signal approximator (FLSA), focusing in its ability to approximately detect the location of the change points in the measured signal. Even though the support recovery properties of the fused lasso have already been studied in the literature (see the next subsection for references), and it has been established that as the number of samples increases, the fused lasso cannot recover the location of the change points exactly, our focus is on the \emph{approximate} recovery of these change points. In particular, we will show that this is possible under well defined circumstances, based on a interesting interpretation of the fused lasso estimate based on duality theory of convex optimization.

The paper is structured as follows: First we will give an intuitive introduction to the methods and the corresponding theory to be studied in the paper. The main results are presented in Section \ref{sec:2}. In order to improve the readability of the paper, most of the proofs are collected in the Appendix \ref{app:A}, except for those of the main results on consistency and inconsistency of the FLSA algorithm. In Section \ref{sec:3}, we discuss extensions including $\ell_1$ variance filtering. The paper is concluded in Section \ref{sec:4}.

\subsection{Problem formulation} \label{subsec:problem}
Consider the data $\{y_t,t=1,\ldots, N\}$ and assume that it has been generated by the non-stationary Gaussian stochastic process
\begin{align}
y_t \sim \mathcal{N}(m_t, 1),\;\mbox{where $m_{t+1}=m_t$ ``often''.}
\end{align}
The problem is now to estimate the means $m_t$, $t=1,\dots, N$, from the given data. To start with we have simplified the formulation by assuming a given fixed variance. This assumption will be relaxed in Section \ref{sec:3}. In order to solve this problem we first need to specify what we mean by ``often''.  This could be done by specifying the probability of a change and then using for example multiple model estimation methods \cite{gustafsson-2000a}. Here we will just assume that the mean value function is piecewise constant as a function of time $t$. One way to measure the variability of a sequence $\{m_t, t=1, \dots, N\}$ is to calculate its Total Variation (TV):
$$
\sum_{t=2}^N |m_t-m_{t-1}|.
$$
This is the $\ell_1$-norm of the first-difference sequence and can be seen as a convex approximation/relaxation of counting the number of changes. The fit to the data is measured by the least squares cost function
$$
\frac{1}{2}\sum_{t=1}^N (y_t-m_t)^2,
$$
which is related to the Maximum Likelihood (ML) cost function for the normal distributed case.
The so-called $\ell_1$  mean filter, the TV denoising estimate or the FLSA (fused lasso signal approximator) is given by minimizing a convex combination of these two cost functions,
\begin{align} \label{eq:tv}
\min_{m_1,\ldots, m_N} \frac{1}{2}\sum_{t=1}^N (y_t-m_t)^2+\lambda \sum_{t=2}^N |m_t-m_{t-1}|.
\end{align}
This is a convex optimization problem with only one design parameter, namely $\lambda$.
The TV cost will promote solutions for which $m_t-m_{t-1}=0$, \emph{i.e.}, a piecewise constant estimate. The choice of the  regularization parameter $\lambda$ is very important and  provides a balance between the fit to the data and stressing the structure constraint.  The same idea can be used for the multivariate case, \ie for a vector valued stochastic process. The $\ell_1$ norm can then be replaced by a sum of norms and the vector estimate $\ve{m}_t \in \mathbb{R}^n$ is given by
$$
\min_{\ve{m}_1,\ldots \ve{m}_N} \frac{1}{2}\sum_{t=1}^N \|\ve{y}_t - \ve{m}_t\|_2^2 + \lambda \sum_{t=2}^N \|\ve{m}_t-\ve{m}_{t-1}\|_p,
$$
where typically $p=1, 2$. This is known as \emph{sum-of-norms  regularization} \cite{OhlssonLB:10}.

There are several known results and properties for the FLSA \eqref{eq:tv}, but also many open questions:
The convex optimization problem (\ref{eq:tv}) can be solved very efficiently with a wide range of methods. Standard interior point software can be used for moderate sized problems. For larger size problems (where $n \times N$ is large) first order methods, such as the alternating direction method of multipliers (ADMM), have nice scaling properties, see \cite{BoydPCPE11,Wahlberg12}. Our focus here will, however, be on theory rather than algorithms.

The key design parameter is $\lambda$. It is known that for sufficiently large values of $\lambda$, say $\lambda\geqslant \lambda_{\max}$, where $\lambda_{\max}$ will be defined later, the solution to (\ref{eq:tv}) is the empirical mean estimate
$$
\hat{m}_t = \frac{1}{N}\sum_{j=1}^Ny_j,
$$
\ie we only have one segment. We will re-derive the expression for $\lambda_{\max}$ in our analysis to follow. It is also known that the optimal solution $\hat{m}_t(\lambda)$ is piecewise linear as a function of $\lambda$, and that by reducing  $\lambda$ we only introduce new change points but will keep the change points obtained from larger values of $\lambda$. To be more precise, as $\lambda$ decreases, neither the transition times nor signs change, but only new transition times appear. This is referred to as the \emph{boundary lemma} in \cite{Tibshirani-Taylor-11}, it was first proven in \cite{Friedman-Hastie-Hoffling-Tibshirani-07} and further discussed in \cite{jcgs.2010.09208}.

It is known that problem \eqref{eq:tv} can be reformulated as a standard $\ell_1$ lasso problem
$$
\min_{\ve{x}} \|\ve{y}- \ve{A x} \|_2^2 + \lambda\| \ve{x} \|_1,
$$
for which necessary conditions for recovery of sparse solutions $\ve{x}$ are known. For example  the lasso estimator can only asymptotically recover the correct sparsity pattern if the $\ve{A}$ matrix satisfies the so-called \emph{irrepresentable condition}, \cite{citeulike:5177983}. However, even if these conditions do not hold the lasso estimator may still produce $\ell_2$ consistent estimates, see \cite{citeulike:3979760}.

Some asymptotic convergence properties of the fused lasso are given in  \cite{Rinaldo-09}, where conditions are derived under which the FLSA detects the exact location of the change points as $N \to \infty$; however, the results in \cite{Rinaldo-09} are not completely right, since, as discussed in \citep{Harchaoui-Levy_Leduc-08}, it is not possible to recover the exact change points even asymptotically, as the irrepresentable condition does not hold. On the other hand, $\ell_2$ consistency of the FLSA holds under more general conditions, \emph{c.f.} \citep{Harchaoui-Levy_Leduc-08}. In \citep{Qian-Jia-12}, a modified version of the FLSA is shown to recover the exact change points as the noise variance goes to zero.

\subsection{Optimality Conditions} \label{subsec:optim_cond}
Let us rewrite the FLSA problem (\ref{eq:tv}) as
\begin{align}
\begin{array}{cl}
\min\limits_{\{m_t\}_{t=1}^N,\{w_t\}_{t=2}^N} &\; \displaystyle \frac{1}{2}\sum_{t=1}^N (y_t-m_t)^2+\lambda \sum_{t=2}^N |w_t|\\
\text{s.t.} &\;  w_t=m_t-m_{t-1}, \quad t = 2, \dots, N,
\end{array}
\label{eq:primal}
\end{align}
and introduce the variables
\begin{equation}
\label{eq:zt}
z_{t}=  \sum_{j=1}^{t-1}(m_j-y_j),\; t=2,\dots , N.
\end{equation}
We will now show that $\{z_t, t = 2, \dots, N\}$ are the dual variables (prices) of problem \eqref{eq:primal} and that the Karush-Kuhn-Tucker (KKT) optimality conditions \citep{Luenberger-84} are
\begin{align} \label{eq:optcond}
&z_1=z_{N+1}=0,\nonumber\\
& |  z_{t}|\leqslant \lambda,\; t=2,\ldots , N,\nonumber\\
& |  z_{t}|<\lambda\;\mbox{(constant)}\quad \Rightarrow\: m_{t}=m_{t-1}, \\
&| z_{t_k}|=\lambda \;\mbox{(transition)} \quad \Rightarrow\;
 \sgn(m_{t_{k}}-m_{t_{k}-1}) = \sgn(z_{t_k}), \nonumber
\end{align}
where $t_0= 1 < t_1 < \cdots < t_M \leqslant N$ are the optimal transition times (change points).

To prove this, first differentiate the Lagrangian function (using vector notation for its argument)
\begin{align} \label{eq:lagr}
{\L}(\ve{m},\ve{w},\ve{z})=\frac{1}{2}\sum_{t=1}^N (y_t-m_t)^2+\lambda \sum_{t=2}^N |w_t|+ \sum_{t=2}^ N z_t( m_t - m_{t-1} - w_t)
\end{align}
with respect to $m_t$ to obtain
\begin{align} \label{eq:optm1}
-(y_1-m_1)-z_2 &= 0, \nonumber\\
-(y_t-m_t)+z_{t}-z_{t+1} &= 0, \quad t=2,\ldots, N-1, \\
-(y_N-m_N)+z_N &= 0. \nonumber
\end{align}
By adding up these equations, setting $z_1=z_{N+1}=0$, we obtain the expression \eqref{eq:zt} for $z_t$. The sub-gradient \citep{Rockafellar-70} of the Lagrangian \eqref{eq:lagr} with respect to $w_t$ equals
\begin{align*}
\lambda \Sgn(w_t)-z_{t},\quad t=2, \ldots, N,
\end{align*}
where
$$
\Sgn(w_t)  \in \left\{\begin{array}{ll}
\{-1\}, & w_t<0,\\
\, [-1,1],& w_t=0,\\
\{1\}, & w_t>0.
\end{array}\right.
$$
This gives the optimality conditions
$
z_t=\lambda \Sgn(w_t),\; t=2, \ldots, N,
$
which with $w_t=m_t-m_{t-1}$ (the constraint) proves the second part of \eqref{eq:optcond}.

An alternative way to derive these conditions is by means of the dual problem of \eqref{eq:primal}, namely
\begin{align} \label{eq:dual}
\begin{array}{cl}
\max\limits_{z_t} &\; \displaystyle - \frac{1}{2}\sum_{t=1}^N (z_{t+1}-z_t-y_t)^2\\
\text{s.t.} &| z_t|\leqslant \lambda ,\quad z_1=z_{N+1}=0.
\end{array}
\end{align}
The solution to the primal problem \eqref{eq:primal} can be recovered from \eqref{eq:optm1} as
$$
m_t=y_t+z_{t+1}-z_t.
$$
Also notice that the unconstrained solution to \eqref{eq:dual} is
$$
z_t = C_1+C_2t+\sum ( -y_j).
$$
These observations can be found in, \eg~\cite{Tibshirani-Taylor-11} and as pointed out in \cite{Condat12} they are related to the taut string algorithm in \cite{citeulike:6865334} already published in 2001. Our key observation is that the dual variables $z_t$ determined by \eqref{eq:optcond} can be viewed as a {\em Random Bridge}, the discrete equivalent of a Brownian Bridge, \ie a random walk with changing drift and end constraints,
\begin{align*}
z_1=0,\quad  z_t =  \sum_{j=1}^{t-1}[m_j-y_j],\quad z_{N+1}=0.
\end{align*}
The corresponding visual insight will help us to further analyze the properties of the FLSA.

\subsection{Lambda Max}\label{subsec:lambdamax}
To start with, assume that the optimal solution is the empirical mean with corresponding dual variables
$$
\hat{m}=\frac{1}{N}\sum_{j=1}^N y_j, \quad z_{t}(\hat{m})=  \sum_{j=1}^{t-1} (\hat{m}-y_j).
$$
Define
\begin{align} \label{eq:lambdamax}
\lambda_{\max}=\max_{k=1,\ldots ,N}|z_{k+1}(\hat{m})|=\max_{k=1,\ldots ,N}k\left|\frac{1}{N}\sum_{j=1}^N y_j-\frac{1}{k}\sum_{t=1}^k y_t\right|.
\end{align}
Assume that $\lambda>\lambda_{\max}$ in \eqref{eq:optcond}. Then
\begin{align*}
|z_{t}|\leqslant \max_{1\leqslant k\leqslant N}|z_{k+1}|=\lambda_{\rm max}<\lambda,
\end{align*}
which means that $|z_t|$ will never reach $\lambda$ and we can only have one segment. Thus the  optimal solution is the empirical mean $\hat{m}$. This simple analysis provides an intuitive explanation for the $\lambda_{\max}$ result, \eg derived in \cite{Kim-Koh-Boyd-Gorinevsky-09}.
\subsection{The Bias} \label{sec:bias}

We will now use the optimality conditions \eqref{eq:optcond} to obtain a more precise characterization of $z_t$. Let $\{ 1< t_1< \ldots <t_{M-1}\leqslant N\}$ be the transition (change point) times, {\em i.e.}~ $|z_{t_k}|=\lambda$. Then \eqref{eq:optcond} implies
\begin{align*}
z_t&=\sum_{j=1}^{t-1}(m_j-y_j),\quad  z_{t_k}=\lambda \sgn(m_{t_k}-m_{t_{k-1}}).
\end{align*}
Here we have used that $m_{t_{k}-1} =m_{t_{k-1}}$, since there is no transition in the interval $t_{k-1} < t < t_k$.
Subtracting these expressions gives
\begin{align*}
z_{t_{k+1}}- z_{t_{k}}&=\sum_{j=t_{k}}^{t_{k+1}-1}(m_{t_k}-y_j).
\end{align*}
We can now find the FLSA solution as
\begin{align*}
m_{1}&=\frac{1}{t_1 - 1}\sum_{j=1}^{t_1 - 1} y_j +\frac{\lambda}{t_{1}-1}\sgn(m_{t_1}-m_1),\\
m_{t_k}&=\frac{1}{t_{k+1}-t_{k}}\sum_{j=t_k}^{t_{k+1}-1} y_j +\frac{\lambda}{t_{k+1}-t_{k}}\left(\sgn[m_{t_{k+1}}-m_{t_{k}}]-
\sgn[m_{t_{k}}-m_{t_{k-1}}]\right).
\end{align*}
Since
$$
z_t=\lambda \sgn(m_{t_{k}}-m_{t_{k-1}}) + \sum_{t=t_k}^{t-1} (m_{t_k}-y_t),\quad t_k < t < t_{k+1},
$$
we can see that the bias part of $m_{t_k}$, that is,
\begin{equation}
\frac{\lambda}{t_{k+1}-t_{k}}\left(\sgn[m_{t_{k+1}}-m_{t_{k}}]-
\sgn[m_{t_{k}}-m_{t_{k-1}}]\right),
\label{eq:bias}
\end{equation}
will provide a drift term to $z_t$ in the interval $t_{k}<t<t_{k+1}$. This observation will be of utmost importance in the analysis to follow, which will be illustrated by the next two examples.

\subsection{Example 1}
Consider a signal $\{y_t\}$ which satisfies $y_t\sim \mathcal{N}(m_t,1)$, where $\{m_t\}$ is a piece-wise constant sequence:
\begin{align*}
m_t = \left\{
\begin{array}{rl}
1, & \text{if } 0< t \leqslant 1000,\\
2, & \text{if } 1000 < t \leqslant 2000,\\
1, & \text{if } 2000 < t \leqslant 4000.\\
\end{array} \right.
\end{align*}
Given $4000$ measurements $\{y_1, \ldots , y_{4000}\}$ plotted in Figure \ref{fig:data1}, we want to estimate the means $m_1, \ldots, m_{4000}$.
To solve problem~\eqref{eq:tv} with $\lambda=\lambda_{\max}/3$, \cf \eqref{eq:lambdamax}, a package for specifying and solving convex programs called \texttt{CVX}  \cite{Grant-Boyd-08} is used.
The data is plotted in Figure \ref{fig:data1}.
\begin{figure}[ht]
\begin{center}
\includegraphics[width =70mm]{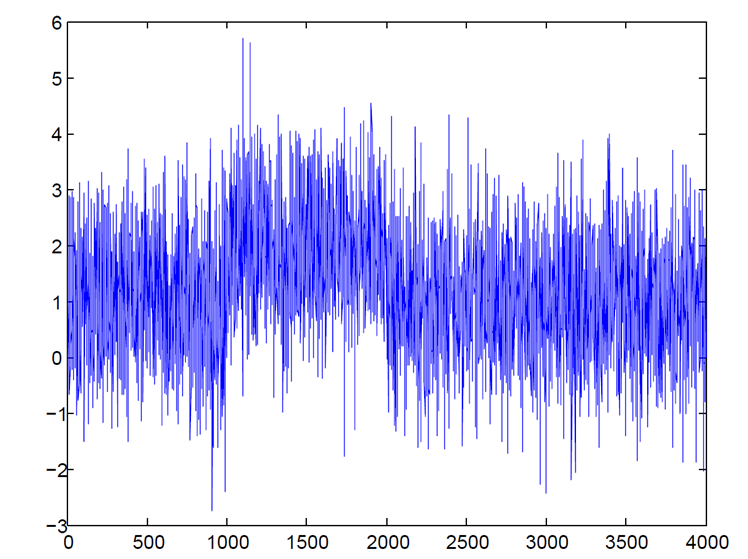}
\caption{Data Set 1.} %
\label{fig:data1} %
\vskip0.5\baselineskip
\includegraphics[width =70mm]{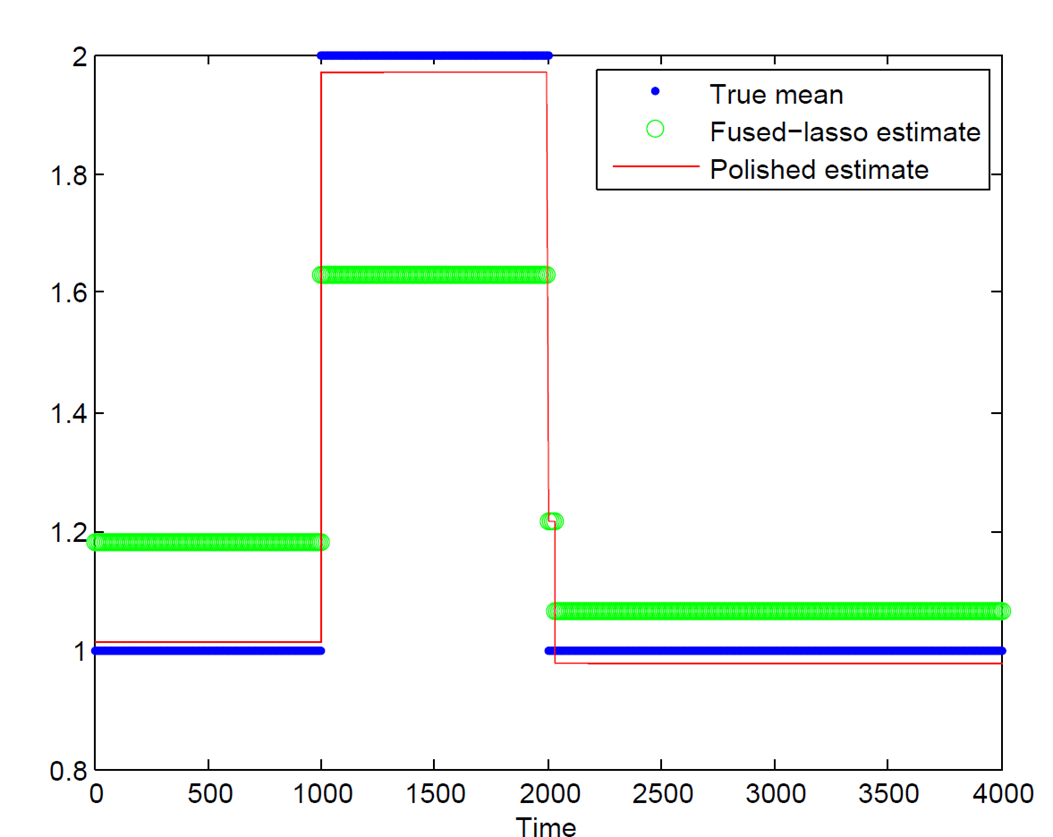}
\caption{Results for Data Set 1.} %
\label{fig:result1}
\end{center}
\end{figure}
Figure \ref{fig:result1} shows the true mean, the fused lasso (FLSA) estimate and the polished estimate, where the means have been re-estimated in the detected intervals. The results are good, in the sense that the change points have been correctly estimated (within reasonable precision).
\subsection{Example 2}
Let us now replace the true mean sequence by
\begin{align*}
m_t = \left\{
\begin{array}{rl}
1, & \text{if } 0< t \leqslant 1000,\\
2, & \text{if } 1000 < t \leqslant 2000,\\
3, & \text{if } 2000 < t \leqslant 4000.\\
\end{array} \right.
\end{align*}
The corresponding data set is plotted in Figure \ref{fig:data2}, and  the resulting estimates are shown in Figure \ref{fig:result2}. Here we have a detection error in
the second interval.
\vskip\baselineskip

\begin{figure}[h!]
\begin{center}
\includegraphics[width =70mm]{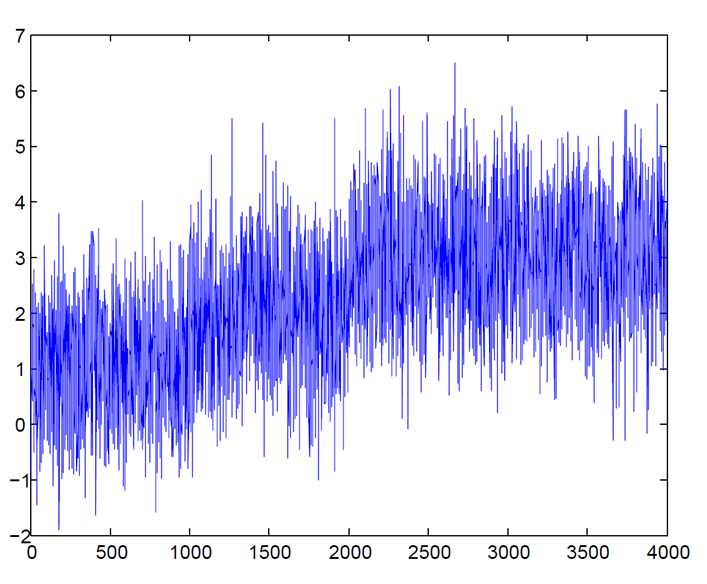}
\caption{Data Set 2.} %
\label{fig:data2} %
\vskip0.5\baselineskip
\includegraphics[width =70mm]{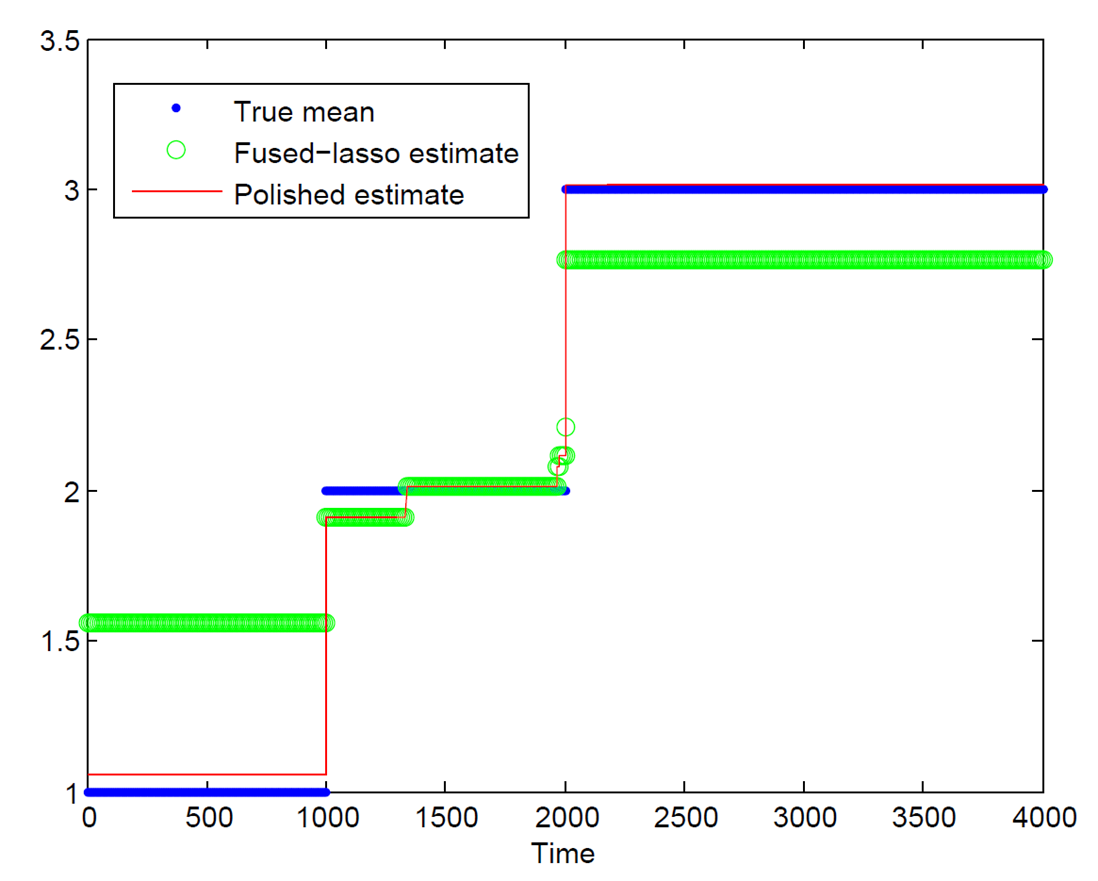}
\caption{Results for Data Set 2.} %
\label{fig:result2}
\end{center}
\end{figure}

\subsection{Explanation of Examples 1 and 2}
To explain the different outcomes let us also plot the corresponding optimal dual variables $\{z_t\}$ called ``random walk'' in Figures \ref{fig:dual1} and \ref{fig:dual2}. We notice that the incorrect detection in Example 2 can be explained by the bias term \eqref{eq:bias} that is zero for that example in the second interval. This means that the drift in the random walk is zero and hence the optimal solution is very sensitive to the noise; in particular, estimated change points appear every time  the random walk touches the $+\lambda$ boundary. We will call this the {\em stair-case problem}, since the reason is that the sign of the changes are both equal. This will be a key observation in the analysis of FLSA to follow in the next section.
\vskip0.1\baselineskip
\begin{figure}[h!]
\begin{center}
\includegraphics[width =80mm]{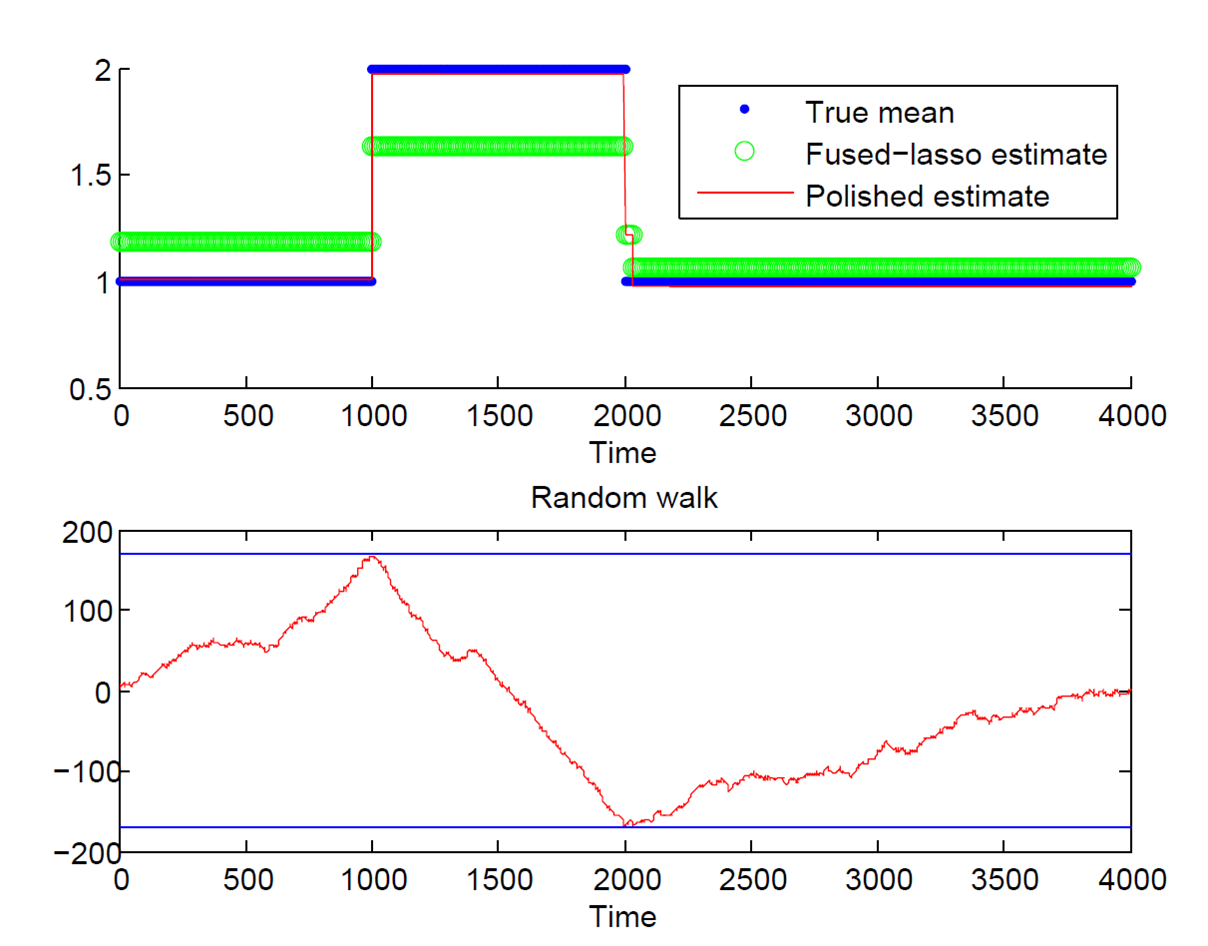}
\caption{Results  and  corresponding optimal dual variables for Example 1.} %
\label{fig:dual1}%
\vskip0.5\baselineskip
\includegraphics[width =80mm]{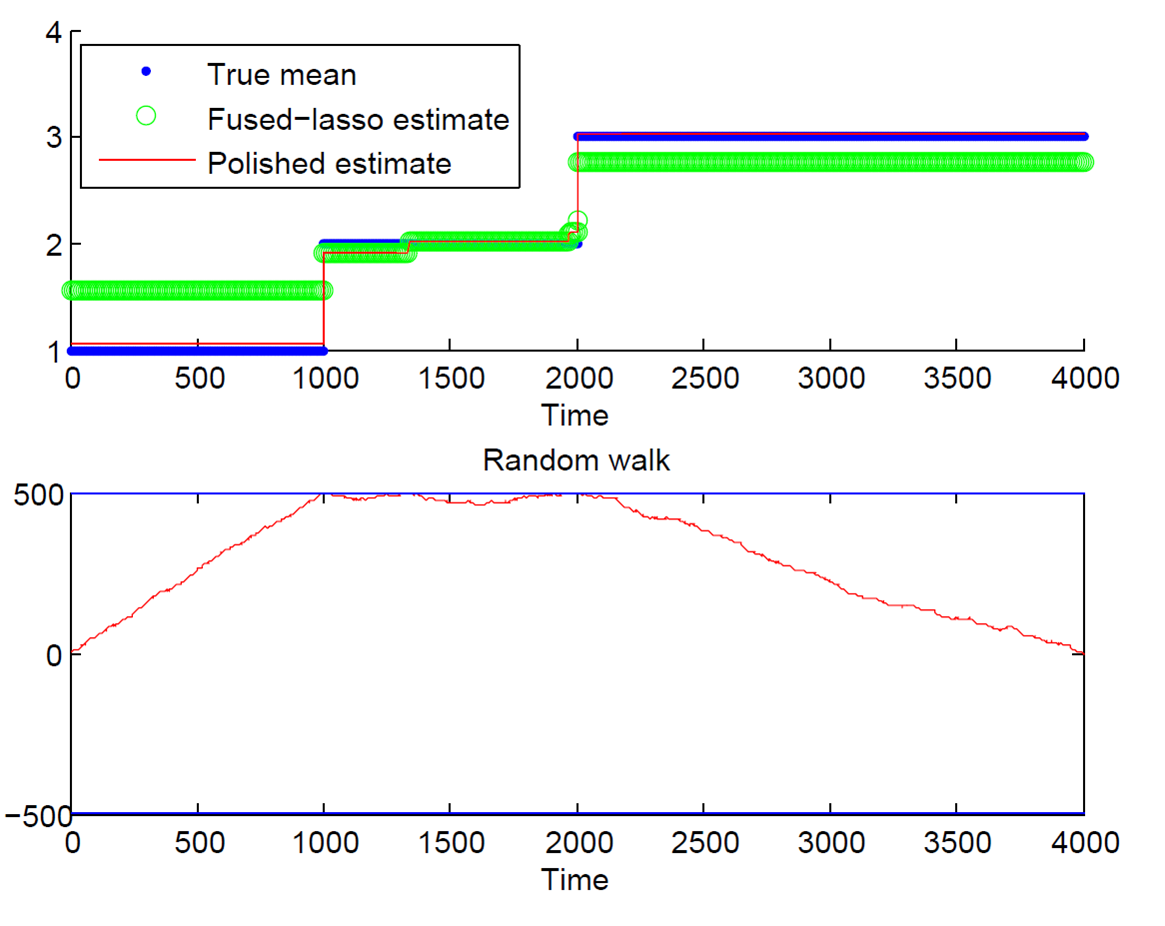}
\caption{Results  and  corresponding optimal dual variables for Example 2.} %
\label{fig:dual2}
\end{center}
\end{figure}

\newpage
\section{Consistency and Lack of Consistency}
\label{sec:2}
In this section we study the sparsity sign consistency (usually called ``sparsistency'') of the standard FLSA, given by
\begin{align} \label{eq:7}
\begin{array}{cl}
\min\limits_{m_1, \ldots, m_N} & \displaystyle \frac{1}{2} \displaystyle \sum\limits_{t = 1}^N (y_t - m_t)^2 + \lambda \sum\limits_{t = 2}^N |m_t - m_{t - 1}|.
\end{array}
\end{align}
where a proper stochastic description of $\{y_t\}$  will be postponed until later. We will rephrase some of the results of the previous section as lemmas.

As seen in Section~\ref{subsec:optim_cond}, the KKT conditions for the optimal solution of \eqref{eq:7} are given by
\begin{align} \label{eq:8}
\ve{m}- \ve{y} = \ve{A} \ve{z}
\end{align}
where $\ve{y} := [y_1 \; \cdots \; y_N]^T$,  $\ve{m} := [m_1 \; \cdots \; m_N]^T$, and
\begin{align} \label{eq:9}
&\ve{A} := \left[ \begin{array}{cccc}
1  &            &           & 0  \\
-1 &1          &           &     \\
    & \ddots & \ddots &    \\
    &            & -1       & 1  \\
0  &            &           & -1
\end{array} \right], \quad
\ve{z} := \left[ \begin{array}{c}
z_2 \\
\vdots \\
z_N
\end{array} \right], \\
&z_t \left\{ \begin{array}{ll}
=\lambda \sgn (m_t - m_{t - 1}), & \text{if } m_t \ne m_{t - 1} \\
\in [-\lambda, \lambda], & \text{otherwise.}
\end{array} \right. \nonumber
\end{align}
Based on the KKT conditions, a simple characterization of the optimal solutions of problem \eqref{eq:7} can be derived.

\begin{lem}[Characterization of optimal solution] \label{lem:3}
$\ve{m} := [m_1 \; \cdots \; m_N]^T$ is an optimal solution of problem \eqref{eq:7} iff
\begin{align} \label{eq:10}
\max\limits_{1 \leqslant k \leqslant N - 1} \left| \sum\limits_{t = 1}^k (m_t - y_t) \right| &\leqslant \lambda \nonumber \\
\sum\limits_{t = 1}^{t_k - 1} (m_t - y_t) &= \lambda \sgn(m_{t_k + 1} - m_{t_k}),\quad k = 1, \ldots, M - 1 \\
\sum\limits_{t = 1}^N y_t &= \sum\limits_{t = 1}^N m_t. \nonumber
\end{align}
where $1 < t_1 <  \cdots < t_{M - 1} \leqslant N$ are the values of $t \in \{1, \ldots, N\}$ at which $m_t \ne m_{t - 1}$, and $t_0 = 1$.
\end{lem}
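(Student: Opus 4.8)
The plan is to exploit that \eqref{eq:7} is an \emph{unconstrained} minimization of the strictly convex objective $f(\ve m)=\tfrac{1}{2}\sum_{t=1}^N(y_t-m_t)^2+\lambda\sum_{t=2}^N|m_t-m_{t-1}|$; for such a problem $\ve m$ is optimal if and only if $\ve 0\in\partial f(\ve m)$, with no constraint qualification needed (Fermat's rule for convex functions). This subgradient condition is exactly the KKT system \eqref{eq:optcond} (equivalently \eqref{eq:8}--\eqref{eq:9}) already established in Section~\ref{subsec:optim_cond}. So the whole task reduces to checking that \eqref{eq:10} is nothing but a component-wise re-encoding of \eqref{eq:optcond}. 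Strict convexity of the quadratic part, whose Hessian is the identity, also makes the minimizer unique, so speaking of ``the'' optimal solution is legitimate.

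The decisive simplification is that, once $\ve m$ is fixed, the dual vector is \emph{forced}: summing the stationarity equations \eqref{eq:optm1} under $z_1=z_{N+1}=0$ gives $z_{k+1}=\sum_{t=1}^k(m_t-y_t)$, cf.\ \eqref{eq:zt}. Hence verifying KKT amounts to testing magnitude and sign constraints on this determined $\ve z$, which I would translate line by line. The terminal identity $z_{N+1}=\sum_{t=1}^N(m_t-y_t)=0$ is the third line of \eqref{eq:10}; dual feasibility $|z_t|\leqslant\lambda$ for $t=2,\dots,N$ reads $\max_{1\leqslant k\leqslant N-1}|\sum_{t=1}^k(m_t-y_t)|\leqslant\lambda$, the first line; and at each transition $t_k$ the sign/complementarity condition becomes $\sum_{t=1}^{t_k-1}(m_t-y_t)=z_{t_k}=\lambda\,\sgn(m_{t_k}-m_{t_k-1})$, the middle line.

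Both directions of the ``iff'' then follow from this dictionary. For necessity, an optimal $\ve m$ comes with the $\ve z$ above satisfying \eqref{eq:optcond}, and the translation yields \eqref{eq:10}. For sufficiency, given $\ve m$ obeying \eqref{eq:10} I would \emph{construct} $\ve z$ by $z_{k+1}:=\sum_{t=1}^k(m_t-y_t)$ and read \eqref{eq:10} backwards: the third line gives $z_{N+1}=0$, the first gives $|z_t|\leqslant\lambda$, and at plateaus ($m_t=m_{t-1}$) the subgradient set $\Sgn(0)=[-1,1]$ asks only for $|z_t|\leqslant\lambda$, which is already in hand; thus $(\ve m,\ve z)$ meets \eqref{eq:optcond} and $\ve m$ is optimal.

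The one place demanding care --- the main (and essentially only) obstacle --- is the transition-versus-plateau bookkeeping of the sign constraints: I must confirm that imposing $|z_t|\leqslant\lambda$ for all $t$ together with the \emph{equality} $z_{t_k}=\lambda\,\sgn(m_{t_k}-m_{t_k-1})$ only at the change points reproduces \emph{exactly} the full subgradient relation $z_t\in\lambda\,\Sgn(m_t-m_{t-1})$, leaving no sign constraint at plateaus and no redundancy at transitions. Since the transition equality automatically saturates the magnitude bound, the first and middle lines of \eqref{eq:10} jointly encode both dual feasibility and complementarity, which closes the equivalence.
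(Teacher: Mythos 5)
Your proposal is correct and follows essentially the same route as the paper's own proof: both reduce the claim to the KKT system \eqref{eq:8}--\eqref{eq:9}, obtain the three lines of \eqref{eq:10} by forming partial sums (the paper's ``adding the first $k$ components'' and ``adding all components''), and recover the KKT conditions in the converse direction by differencing consecutive lines. Your write-up is merely more explicit about the plateau-versus-transition bookkeeping, which the paper leaves implicit.
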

By using
$$
z_1=0,\;z_j=\sum_{t=1}^{j-1}[m_t-y_t],\quad z_{N+1}=0,
$$
Lemma  \ref{lem:3} just gives the optimality conditions \eqref{eq:optcond} derived in Section \ref{subsec:optim_cond}. The bias result in Section \ref{sec:bias} is given in Lemma \ref{lem:4}. This lemma establishes the solution of problem \eqref{eq:7} when the location of the transition times $1 = t_0 < t_1 < \cdots < t_{M - 1} \leqslant N$ (see the notation in Lemma~\ref{lem:3}) and the transition signs are known.

\begin{lem}[Solution for known transition times] \label{lem:4}
Following the notation of Lemma~\ref{lem:3}, assume that the transition times $1 = t_0 < t_1 < \cdots < t_{M - 1} \leqslant N$ and the signs $s_k := \sgn(m_{t_k} - m_{t_{k - 1}})$ ($k = 1, \ldots, M - 1$) for an optimal solution $\ve{m}$ of problem \eqref{eq:7} are known. Then, $\ve{m}$ is given by
\begin{align*}
m_1 &= \frac{1}{t_1 - 1} \sum\limits_{t = 1}^{t_1 - 1} y_t + \frac{1}{t_1 - 1} \lambda s_1, \\ %
m_{t_k} &= \frac{1}{t_{k + 1} - t_k} \sum\limits_{t = t_k}^{t_{k + 1} - 1} y_t  + \frac{1}{t_{k + 1} - t_k} \lambda (s_{k + 1} - s_k); \quad k = 1, \ldots, M - 1,
\end{align*}
where $s_m := 0$.
\end{lem}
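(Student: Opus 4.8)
The plan is to derive the stated formulas directly from the optimality conditions of Lemma~\ref{lem:3}, phrased in terms of the dual variables $z_t = \sum_{j=1}^{t-1}(m_j - y_j)$ with the boundary conventions $z_1 = 0$ and $z_{N+1} = 0$. The computation is essentially the one already sketched in Section~\ref{sec:bias}, reorganized into a clean derivation. The first observation I would record is structural: by the very definition of the transition times $t_1 < \cdots < t_{M-1}$ as the indices where $m_t \ne m_{t-1}$, the sequence $m_t$ is constant on each half-open segment, so that $m_j = m_{t_k}$ for $t_k \leqslant j \leqslant t_{k+1}-1$ (and $m_j = m_1$ for $1 \leqslant j \leqslant t_1 - 1$). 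The second ingredient is that at each transition the optimality condition reads $z_{t_k} = \lambda\,\sgn(m_{t_k}-m_{t_{k-1}}) = \lambda s_k$, which fixes the value of the dual variable at every change point.

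Next I would subtract the transition conditions at two consecutive change points. Using the definition of $z_t$ together with the constancy of $m$ on the $k$-th segment, I would obtain
\[
z_{t_{k+1}} - z_{t_k} = \sum_{j=t_k}^{t_{k+1}-1}(m_j - y_j) = (t_{k+1}-t_k)\,m_{t_k} - \sum_{j=t_k}^{t_{k+1}-1} y_j .
\]
Solving for $m_{t_k}$ and inserting $z_{t_k} = \lambda s_k$, $z_{t_{k+1}} = \lambda s_{k+1}$ produces
\[
m_{t_k} = \frac{1}{t_{k+1}-t_k}\sum_{j=t_k}^{t_{k+1}-1} y_j + \frac{\lambda(s_{k+1}-s_k)}{t_{k+1}-t_k},
\]
which is exactly the claimed expression, with the bias term \eqref{eq:bias} appearing naturally from the sign difference.

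Finally I would treat the two boundary segments separately, since they invoke the end conventions rather than a genuine transition. For the first segment I would use $z_1 = 0$ together with $z_{t_1} = \lambda s_1$ to get $m_1 = \frac{1}{t_1-1}\sum_{j=1}^{t_1-1} y_j + \frac{\lambda s_1}{t_1-1}$. For the last segment, running up to $t = N$, I would use the end constraint $z_{N+1} = 0$; setting $t_M := N+1$ and $s_M := 0$ turns this into a special instance of the general formula, which is precisely the reason the lemma stipulates $s_M = 0$. Since each $m_{t_k}$ is pinned down uniquely by this procedure, the displayed formulas give \emph{the} optimal $\ve m$ compatible with the prescribed transition times and signs.

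There is no genuinely hard step here: the content is an unwinding of the KKT/duality conditions already established in Lemma~\ref{lem:3} and \eqref{eq:10}. The only points that require care are the bookkeeping of the segment index ranges over the half-open intervals $[t_k, t_{k+1})$, the correct telescoping of the $z_t$ differences, and the uniform treatment of the two boundary segments through the conventions $z_1 = z_{N+1} = 0$ and $s_M = 0$; getting these sign differences and endpoints right is what makes the formulas match exactly.
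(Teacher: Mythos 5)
Your proof is correct, but it follows a different route from the one the paper uses in Appendix~\ref{app:A}. You derive the formulas as \emph{necessary} conditions from the dual characterization of Lemma~\ref{lem:3}: you evaluate $z_t=\sum_{j=1}^{t-1}(m_j-y_j)$ at consecutive transition times, use $z_{t_k}=\lambda s_k$, telescope, and handle the two boundary segments via $z_1=z_{N+1}=0$ with the conventions $t_M=N+1$, $s_M=0$ — essentially a rigorous version of the Section~\ref{sec:bias} computation. The paper instead works entirely on the primal side: it restricts the cost of problem~\eqref{eq:7} to sequences that are constant on the prescribed segments, replaces each $|m_{t_k}-m_{t_k-1}|$ by $s_k(x_{t_k}-x_{t_{k-1}})$ using the known signs, and then differentiates the resulting smooth quadratic in the segment values $x_{t_k}$, setting the derivatives to zero. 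The trade-off: the paper's argument is self-contained (it never invokes Lemma~\ref{lem:3}) and exhibits the formulas as the stationary point of the restricted problem, while your derivation leans on Lemma~\ref{lem:3} but integrates better with the paper's random-bridge narrative, making the bias term \eqref{eq:bias} appear transparently as the difference of the pinned dual values $\lambda s_{k+1}-\lambda s_k$, and treating the end segments uniformly through the zero boundary conditions rather than as separate stationarity equations. Both establish the same uniqueness of $\ve{m}$ given the transition times and signs, so your proof stands as a valid alternative.
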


The following lemma establishes that, as $\lambda$ is decreased, neither the transition times $1 = t_0 < t_1 < \cdots < t_M = N + 1$ nor the signs $s_k = \sgn(m_{t_k} - m_{t_{k - 1}})$ change, but only new transition times appear. This lemma is essentially \citep{Friedman-Hastie-Hoffling-Tibshirani-07}[Proposition~2 (A2)] and is similar to the so-called ``boundary lemma'' of \citep{Tibshirani-Taylor-11}. For a proof, we refer the reader to those references.

\begin{lem}[Immobility of transition times] \label{lem:5}
Following the notation of Lemma~\ref{lem:4}, let $1 = t_0^\lambda  < t_1^\lambda < \cdots < t_{M_\lambda}^\lambda = N + 1$ be the transition times for a particular value of $\lambda$. Then, if $t$ is a transition time for $\lambda = \lambda_0$, \ie $t = t_k^{\lambda_0}$   for some $k \in \{1, \ldots, M_{\lambda_0} - 1\}$, then for every $\lambda < \lambda_0$,  $t = t_{k'}^\lambda$ for some $k' \in \{1, \ldots, M_\lambda - 1\}$.
\end{lem}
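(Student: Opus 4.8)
The plan is to translate the statement into the language of the dual problem \eqref{eq:dual} and its active constraints. Writing the dual as the minimization of $\frac{1}{2}\|\ve{A}\ve{z} - \ve{y}\|_2^2$ over the box $B_\lambda := \{\ve{z} : |z_t| \leqslant \lambda\}$ (with $z_1 = z_{N+1} = 0$), the objective is strictly convex because $\ve{A}$ has full column rank, so there is a unique optimal $\ve{z}(\lambda)$. By \eqref{eq:optcond} a time $t$ is a transition time precisely when the corresponding box constraint is active, $|z_t(\lambda)| = \lambda$, with the sign of $z_t$ matching the sign of the jump. Hence the lemma is equivalent to the monotonicity statement that the active set $\mathcal{A}(\lambda) := \{t : |z_t(\lambda)| = \lambda\}$ satisfies $\mathcal{A}(\lambda_0) \subseteq \mathcal{A}(\lambda)$ for every $\lambda < \lambda_0$, together with constancy of the activated signs.

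First I would record the piecewise-linear structure of the path $\lambda \mapsto (\ve{m}(\lambda), \ve{z}(\lambda))$. On any interval of $\lambda$-values over which $\mathcal{A}$ and the signs $s_k$ are frozen, Lemma~\ref{lem:4} expresses $\ve{m}(\lambda)$ explicitly as an affine function of $\lambda$; feeding this back through $z_t = \sum_{j=1}^{t-1}(m_j - y_j)$ shows that each inactive coordinate $z_t(\lambda)$ is affine in $\lambda$ on that interval, while each active coordinate tracks $\pm\lambda$. This is exactly the \emph{Random Bridge} picture: between consecutive transition times the dual variable is a bridge pinned to $\pm\lambda$ at its endpoints, and as $\lambda$ shrinks the bridge is pinched toward the tightening boundary. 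Breakpoints of the path occur only when some inactive $|z_t|$ meets the moving boundary $\lambda$ (a constraint joins $\mathcal{A}$) or, a priori, when some active $z_t$ would detach from it. The content of the lemma is precisely that the latter never happens as $\lambda$ decreases.

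The hard part, and the technical heart of the argument, is ruling out detachment. I would characterize $\ve{z}(\lambda)$ by the variational inequality $\langle \ve{A}^T(\ve{A}\ve{z}(\lambda) - \ve{y}), \ve{z} - \ve{z}(\lambda)\rangle \geqslant 0$ for all $\ve{z} \in B_\lambda$, and compare the optima at $\lambda_0$ and at $\lambda < \lambda_0$, exploiting $B_\lambda \subset B_{\lambda_0}$. Suppose, for contradiction, that some $t^*$ is active at $\lambda_0$ with $z_{t^*}(\lambda_0) = s^*\lambda_0$ but becomes inactive just below $\lambda_0$. The crux is a monotonicity property of this box-constrained least-squares problem under uniform shrinkage of the box: since the Hessian $\ve{A}^T\ve{A}$ is the tridiagonal discrete second-difference matrix — a symmetric $M$-matrix with entrywise nonnegative inverse — lowering the boundary at an active coordinate can only push the neighbouring components in the same direction, so the constraint at $t^*$ stays binding with unchanged sign rather than releasing. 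Making this comparison quantitative (equivalently, the taut-string statement that a shortest path confined to a thinner tube touches its boundary on a superset of times) is exactly the \textbf{boundary lemma} of \citep{Tibshirani-Taylor-11} and \citep{Friedman-Hastie-Hoffling-Tibshirani-07}; I would invoke the $M$-matrix monotonicity to close the contradiction and conclude that $\mathcal{A}$ is nondecreasing as $\lambda$ decreases with frozen signs, which is the assertion of the lemma.
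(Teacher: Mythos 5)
The paper itself contains \emph{no} proof of Lemma~\ref{lem:5}: it explicitly refers the reader to \citep{Friedman-Hastie-Hoffling-Tibshirani-07}[Proposition~2 (A2)] and to the boundary lemma of \citep{Tibshirani-Taylor-11}. Your proposal ends in exactly the same place: the no-detachment step, which is the entire content of the lemma, is closed by invoking precisely those references. So in substance you have reproduced the paper's treatment (a reduction to the cited literature), wrapped in a dual active-set framing. That framing --- the dual box-constrained least-squares problem \eqref{eq:dual}, uniqueness of $\ve{z}(\lambda)$ from full column rank of $\ve{A}$, the piecewise-linear path, and the identification of the lemma with persistence of boundary contact --- is correct as far as it goes and is consistent with the paper's own discussion of the optimality conditions \eqref{eq:optcond}.

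Two cautions, however, if the text is meant to be more than a citation. First, your opening ``equivalence'' is only half true: by \eqref{eq:optcond} a transition time is necessarily an active dual coordinate, but an active coordinate need not be a transition time, since $|z_t|=\lambda$ is compatible with $m_t=m_{t-1}$. This happens, for instance, at $\lambda=\lambda_{\max}$ (the dual touches the boundary while the solution is the global mean, with no transitions) and along staircase segments where $z_t$ sits at $+\lambda$ over an interval. Consequently, monotonicity of the active set $\mathcal{A}(\lambda)$ does not by itself yield the lemma, which is a statement about primal jumps; one must additionally handle the failure of strict complementarity at the breakpoints of the path, where a coordinate is active but the corresponding jump is still zero. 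Second, the $M$-matrix remark is a heuristic, not an argument: as $\lambda$ decreases the box shrinks at \emph{every} coordinate and from \emph{both} sides, so the upper and lower constraints move in opposite directions, and neither entrywise nonnegativity of $(\ve{A}^T\ve{A})^{-1}$ nor the submodularity implied by the nonpositive off-diagonal entries gives off-the-shelf comparative statics. Indeed, the persistence property is special to the one-dimensional FLSA structure --- the analogous statement need not hold for general generalized-lasso penalty matrices, which is why \citep{Tibshirani-Taylor-11} must prove it as a separate lemma --- so genuine work is required at exactly the point you delegate to the citation. Relative to the paper, which delegates the same step, this is not a defect; but the proposal should be read as a (slightly lossy) reformulation plus citation, not as an independent proof.
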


Lemmas~\ref{lem:3}, \ref{lem:4} and \ref{lem:5} give a nice interpretation of the FLSA estimate. Consider Figure~\ref{fig:interp}. Here, $y_t$ corresponds to the sketch of a noisy piece-wise constant signal, and $m_t$ is its FLSA estimate. Below this diagram, the dual variables $z_t = \sum_{j=1}^{t-1} [m_j - y_j]$ are displayed. According to Lemma~\ref{lem:3}, $z_t$ corresponds to a random bridge (\cf previous section), \emph{i.e.}, a conditioned random walk with drift whose end points are fixed at zero: $z_0 = z_N = 0$. Furthermore, $m_t$ is such that $|z_t|$ is bounded by $\lambda$, staying constant in segments where $|z_t| < \lambda$. $z_t$ takes the value $\lambda$ at those time instants where $m_t$ increases, and $-\lambda$ when $m_t$ decreases; in other words, $z_t$ is forced to take specific values at the end points of each segment where $m_t$ remains constant. To satisfy these end conditions, $m_t$ is subject to a bias, which is positive or negative depending on whether $m_t$ in the respective segment is a local minimum or maximum, respectively, \emph{c.f.} Lemma~\ref{lem:4}; in case the segment is part of an ascending or descending ``staircase'' (\emph{i.e.}, the segments lie between two change points where $m_t$ increases or decreases on both), the bias is zero. Notice in addition that the bias is higher for larger values of $\lambda$; in fact, as $\lambda$ is increased, the values of $m_t$ for consecutive segments get closer, until some critical value of $\lambda$ is reached, beyond which a change point disappears (\emph{i.e.}, some consecutive segments are fused together). However, according to Lemma~\ref{lem:5}, the location of the change points does not change with $\lambda$; they can merely disappear as $\lambda$ increases. As seen in Section~\ref{subsec:lambdamax}, for $\lambda \geqslant \lambda_{\max}$, all segments are fused together into one single segment.

\begin{figure}
\begin{center}
\includegraphics[height=0.7\columnwidth]{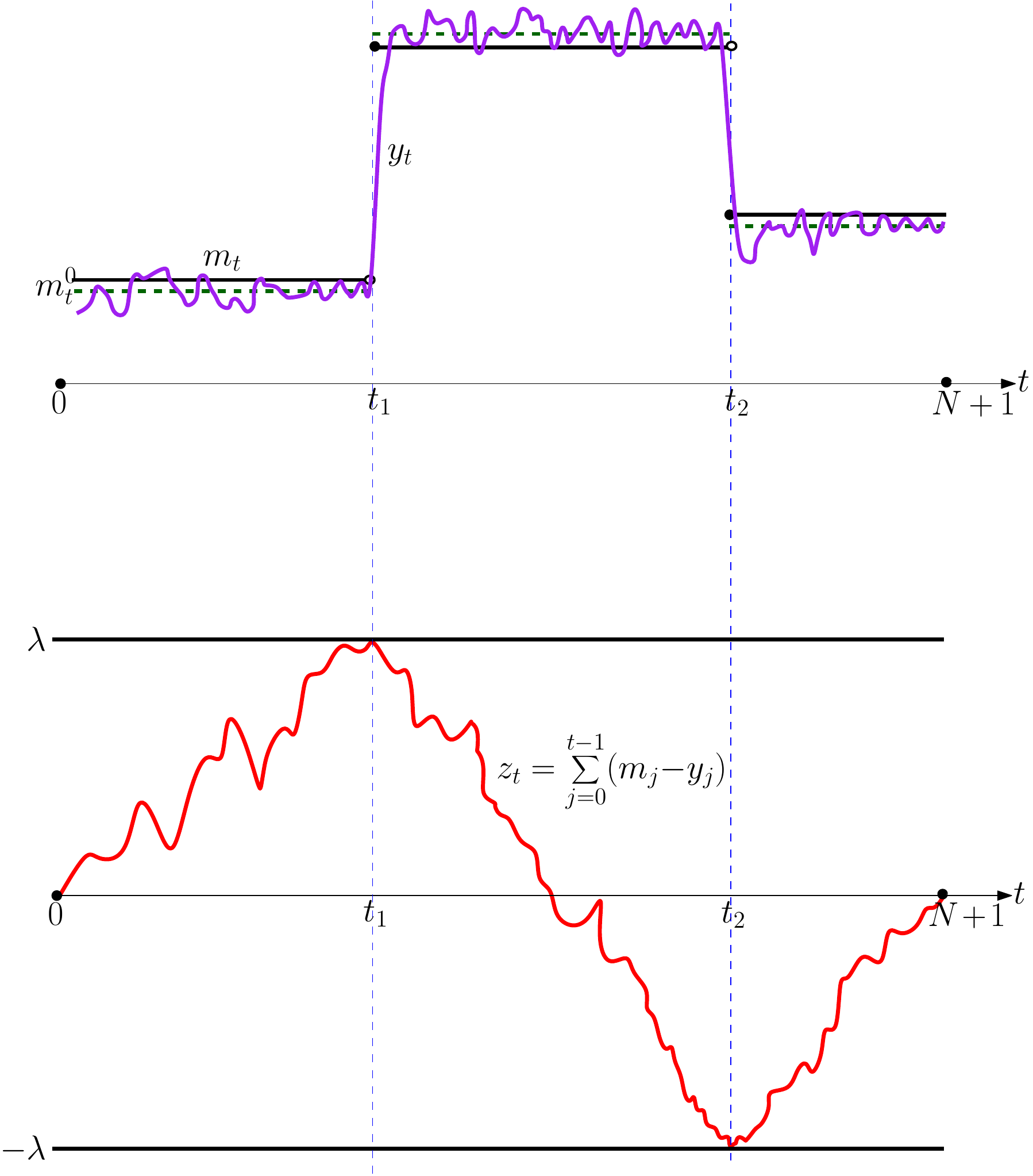}
\end{center}
\caption{Top: Piece-wise constant signal (dashed line) contaminated with noise (purple, solid line), and its FLSA estimate (solid, thick line). Bottom: Dual variable associated with the FLSA estimate.}
\label{fig:interp}
\end{figure}

The random bridge interpretation is very clear, and gives an intuitive explanation of conditions under which the FLSA provides a consistent estimate. We will digress for the moment, and study the consistency problem from the point of view of the so-called ``irrepresentable conditions''.

\subsection{The Irrepresentable Conditions}
One approach to study the consistency of FLSA is to reformulate it as a standard lasso problem, and then to work with the so-called irrepresentable condition. It is well known {\cite{Tibshirani-Taylor-11} that the FLSA can be formulated as an almost standard lasso problem of the form
\begin{align} \label{eq:11}
\min\limits_{\ve{\tilde{x}}} \;\frac{1}{2} \left\| \ve{y} - \ve{\tilde{A}} \ve{\tilde{x}} \right\|_2^2 + \lambda \sum\limits_{t = 2}^N \left| \tilde{x}_t \right|.
\end{align}
To this end, we can define $\tilde{x}_1 := m_1$, $\tilde{x}_t := m_t - m_{t - 1}$ for $t = 2, \dots, N$, and
\begin{align*}
\ve{\tilde{A}} := \left[ \begin{array}{cccc}
1         & 0        & \cdots & 0         \\
1         & 1        & \ddots & \vdots \\
\vdots & \vdots & \ddots & 0        \\
1         &1         & \cdots &1
\end{array} \right] \in \mathbb{R}^{N \times N}.
\end{align*}
Equation~\eqref{eq:11} is not a standard lasso estimator yet, since the $\ell_1$ penalty involves only $N-1$ of the $N$ components in $\ve{\tilde{x}}$. However, it is possible to formulate the FLSA as a fully standard lasso, as shown in the following lemma.

\begin{lem}[Lasso equivalent form of the FLSA] \label{lem:6}
The FLSA can be reformulated as
\begin{align} \label{eq:12}
\min\limits_{\ve{x}} \; \frac{1}{2} \left\| \ve{\tilde{y}} - \ve{A x} \right\|_2^2 + \lambda \|\ve{x}\|_1,
\end{align}
where $\ve{x} \in \mathbb{R}^{N - 1}$ is given by $x_t := \tilde{x}_{t + 1} = m_{t + 1} - m_t$ for $t = 1, \ldots, N - 1$, $\ve{\tilde{y}} := \ve{y} - (N^{-1} \sum\nolimits_{t = 1}^N y_t) \ve{1}_{N,1}$, and $\ve{A} \in \mathbb{R}^{N \times N - 1}$ is given by
\begin{align*}
A_{i,j} = \left\{ \begin{array}{ll}
\displaystyle \frac{j - N}{N}, & i \leqslant j \\
\displaystyle \frac{j}{N},       & i > j.
\end{array} \right.
\end{align*}
In addition, the solution of \eqref{eq:12} can be converted back into that of the FLSA by making
\begin{align*}
m_1 = \frac{1}{N} \sum\limits_{t = 1}^N y_t - \frac{1}{N} \sum\limits_{t = 1}^{N - 1} \sum\limits_{k = 1}^t x_k; \quad %
m_t = m_1 + \sum\limits_{k = 1}^{t - 1} x_k ,\quad t = 2, \ldots, N.
\end{align*}
Notice, finally, that the new data vector satisfies the equation $\ve{\tilde{y}} = \ve{A} \ve{x}_0 + \ve{\varepsilon'}$, where $\ve{x}_0 \in \mathbb{R}^{N - 1}$  is given by $(\ve{x}_0)_t := (\ve{m}_0)_{t + 1} - (\ve{m}_0)_t$ for $t = 1, \dots, N - 1$ and $\ve{\varepsilon '} := (\ve{I}_N - N^{-1} \ve{1}_{N,1} \ve{1}_{N,1}^T) \ve{\varepsilon}$, \ie the new noise vector has zero mean by construction, but it does not have independent components if $\ve{\varepsilon}$ had.
\end{lem}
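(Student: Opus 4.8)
The plan is to obtain the fully standard lasso form by \emph{profiling out} the single unpenalized variable $m_1 = \tilde{x}_1$ from the almost-standard formulation \eqref{eq:11}. The structural observation that makes this work is that the first column of $\ve{\tilde{A}}$ is exactly the all-ones vector $\ve{1}_{N,1}$, so that $\ve{\tilde{A}}\ve{\tilde{x}} = m_1 \ve{1}_{N,1} + \ve{\tilde{A}}_{2:N}\,\ve{\tilde{x}}_{2:N}$, where $\ve{\tilde{A}}_{2:N}$ collects columns $2,\dots,N$ of $\ve{\tilde{A}}$ and $\ve{\tilde{x}}_{2:N} := [\tilde{x}_2 \; \cdots \; \tilde{x}_N]^T$. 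Since the $\ell_1$ penalty in \eqref{eq:11} does not involve $\tilde{x}_1$, I would invoke partial minimization, $\min_{\tilde{x}_1,\,\ve{\tilde{x}}_{2:N}} = \min_{\ve{\tilde{x}}_{2:N}}\min_{\tilde{x}_1}$, and solve the inner unconstrained quadratic in $m_1$ in closed form.

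Setting the derivative of the quadratic with respect to $m_1$ to zero gives $\ve{1}_{N,1}^T(\ve{y} - m_1 \ve{1}_{N,1} - \ve{\tilde{A}}_{2:N}\ve{\tilde{x}}_{2:N}) = 0$, i.e. $m_1 = N^{-1}\ve{1}_{N,1}^T(\ve{y} - \ve{\tilde{A}}_{2:N}\ve{\tilde{x}}_{2:N})$; this is precisely the stationarity condition $\sum_t m_t = \sum_t y_t$ already recorded in Lemma~\ref{lem:3}. Substituting this optimal $m_1$ back, the residual becomes $\ve{P}(\ve{y} - \ve{\tilde{A}}_{2:N}\ve{\tilde{x}}_{2:N})$ with the centering projection $\ve{P} := \ve{I}_N - N^{-1}\ve{1}_{N,1}\ve{1}_{N,1}^T$. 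Recognizing $\ve{P}\ve{y} = \ve{\tilde{y}}$, identifying $\ve{x} := \ve{\tilde{x}}_{2:N}$ (so $x_t = m_{t+1}-m_t$), and setting $\ve{A} := \ve{P}\ve{\tilde{A}}_{2:N}$, the objective collapses to the claimed standard lasso \eqref{eq:12}.

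It then remains to make three computations explicit. First, for the entries of $\ve{A}$: the $j$-th column of $\ve{\tilde{A}}_{2:N}$ is the indicator vector with ones in rows $i>j$, whose entries sum to $N-j$; applying $\ve{P}$ subtracts $(N-j)/N$ from each entry, yielding $A_{i,j} = (j-N)/N$ for $i\leqslant j$ and $A_{i,j} = j/N$ for $i>j$, matching the stated formula. Second, the back-conversion: $m_t = m_1 + \sum_{k=1}^{t-1}x_k$ is immediate by telescoping, while substituting $\ve{1}_{N,1}^T\ve{\tilde{A}}_{2:N}\ve{x} = \sum_{j=1}^{N-1}(N-j)x_j$ into the formula for $m_1$ and reindexing via $\sum_{j}(N-j)x_j = \sum_{t=1}^{N-1}\sum_{k=1}^t x_k$ recovers the claimed expression for $m_1$. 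Third, writing $\ve{y} = \ve{m}_0 + \ve{\varepsilon}$ and using $\ve{m}_0 = (\ve{m}_0)_1\ve{1}_{N,1} + \ve{\tilde{A}}_{2:N}\ve{x}_0$ together with $\ve{P}\ve{1}_{N,1}=\ve{0}$ gives $\ve{\tilde{y}} = \ve{P}\ve{m}_0 + \ve{P}\ve{\varepsilon} = \ve{A}\ve{x}_0 + \ve{\varepsilon}'$ with $\ve{\varepsilon}' = \ve{P}\ve{\varepsilon}$; since $\ve{P}$ is a rank-$(N-1)$ projection, $\cov(\ve{\varepsilon}') = \ve{P}\,\cov(\ve{\varepsilon})\,\ve{P}$ is singular and non-diagonal, so the independence of the components is lost.

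I would expect no deep obstacle here, since the content is essentially a change of variables plus elimination of the intercept. The one point demanding genuine care is justifying the equivalence of the two \emph{optimization problems} rather than merely of their objectives: I must argue that partial minimization preserves the optimal value and that the recovered pair $(m_1,\ve{x})$ reproduces the FLSA optimizer. This holds because the inner quadratic in $m_1$ is strictly convex, hence has a unique minimizer for each $\ve{x}$, and because the profiling equation coincides with the $m_1$-stationarity condition embedded in Lemma~\ref{lem:3}. The explicit entrywise evaluation of $\ve{A}$ and the reindexing in the $m_1$ formula are then the only routine bookkeeping steps.
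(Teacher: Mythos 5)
Your proposal is correct and follows essentially the same route as the paper: the paper also eliminates the unpenalized intercept $\tilde{x}_1 = m_1$ (by completing the square rather than by setting the derivative to zero, which is the same computation) and then uses the idempotency of the centering matrix $\ve{I}_N - N^{-1}\ve{1}_{N,1}\ve{1}_{N,1}^T$ to collapse the reduced objective into the standard lasso form \eqref{eq:12}. Your explicit entrywise verification of $\ve{A}$ and of the noise decomposition only spells out steps the paper leaves implicit.
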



As an example, notice that for $N=4$ we have
\begin{align*}
\ve{A}=\frac{1}{4}\left[
\begin{matrix}
-3 & -2& -1\\ 1 & -2 & -1\\
1 & 2 & -1\\1 & 2 & 3
\end{matrix}\right].
\end{align*}

From the previous lemma, the asymptotic properties of the FLSA can be established in principle from the existing body of results on the lasso. The exact support recovery properties, in particular, are known to depend on the fulfillment of the so-called \emph{irrepresentable condition}~\cite{citeulike:5177983}. This condition, and its several variants, relies on a particular construction depending on the regressor matrix $\ve{A}$, which, for the case of the FLSA, is developed in the next two lemmas.

\begin{lem}[Normal matrix for the lasso equivalent of the FLSA] \label{lem:7}
For the lasso equivalent formulation of the FLSA, given by Lemma~\ref{lem:6}, the normal matrix $\ve{C} := \ve{\tilde{A}}^T \ve{\tilde{A}}$  is given by
\begin{align} \label{eq:13}
C_{ki} = C_{ik} = \frac{i (N - k)}{N}, \quad \text{for } i \leqslant k.
\end{align}
\end{lem}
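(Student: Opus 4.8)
The plan is to obtain \eqref{eq:13} by a direct evaluation of the Gram matrix, taking advantage of the fact that each column of $\ve{A}$ takes only two distinct values. Here I read the ``normal matrix'' as $\ve{A}^{T}\ve{A}$ for the centered $N\times(N-1)$ matrix $\ve{A}$ of Lemma~\ref{lem:6} (rather than the all-ones $\ve{\tilde{A}}$ of \eqref{eq:11}); this is the Gram matrix of the fully standardized lasso \eqref{eq:12}, and the displayed $N=4$ example confirms it reproduces \eqref{eq:13}. First I would record the column structure: for each $j$, the entry $A_{r,j}$ equals $(j-N)/N$ on the $j$ ``top'' rows $r\le j$ and $j/N$ on the $N-j$ ``bottom'' rows $r>j$. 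As a consistency check, each column sums to zero, in agreement with the centering $\ve{A}^{T}\ve{1}_{N,1}=\ve{0}$ built into Lemma~\ref{lem:6}.

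The key step is then to fix $i\le k$ and split the sum $C_{ki}=\sum_{r=1}^{N}A_{r,k}A_{r,i}$ according to the two thresholds $i$ and $k$, which partition the row index into three blocks on each of which both factors are constant. On the $i$ rows with $r\le i$ both columns are in their top phase; on the $k-i$ rows with $i<r\le k$ column $i$ has switched to its bottom value while column $k$ is still in its top phase; and on the $N-k$ rows with $r>k$ both are in their bottom phase. This yields
\begin{align*}
N^{2}C_{ki}=i\,(i-N)(k-N)+(k-i)\,i\,(k-N)+(N-k)\,ik.
\end{align*}
Expanding the three products, the $i^{2}k$, $i^{2}N$ and $ik^{2}$ contributions cancel in pairs, leaving $iN^{2}-ikN=iN(N-k)$, so that $C_{ki}=i(N-k)/N$. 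The equality $C_{ik}=C_{ki}$ is immediate since $\ve{C}$ is symmetric as a Gram matrix.

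There is no conceptual obstacle here; the only thing to watch is the bookkeeping of the three row blocks --- in particular, correctly identifying that on the middle block it is column $i$ (the smaller index) that is already in its bottom phase while column $k$ remains in its top phase --- and then trusting the cancellation in the expansion. I would verify the final formula against the explicit $N=4$ matrix already displayed before the lemma.
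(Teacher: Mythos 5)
Your proof is correct and follows essentially the same route as the paper's: split the sum $\sum_{r=1}^N A_{r,i}A_{r,k}$ over the three row blocks $r\leqslant i$, $i<r\leqslant k$, $r>k$ determined by the two-valued column structure of the centered matrix $\ve{A}$, and simplify. You also correctly resolved the notational slip in the lemma statement — the formula \eqref{eq:13} indeed refers to $\ve{A}^T\ve{A}$ for the centered matrix of Lemma~\ref{lem:6}, which is exactly what the paper's own proof computes.
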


\begin{lem}[Interpolation property of the normal matrix] \label{lem:8}
Let $\ve{C} \in \mathbb{R}^{n \times n}$ be as in Lemma~\ref{lem:7}, and consider a set $K \subseteq \{1, \ldots, n\}$. Then, the matrix $\ve{X} := \ve{C}_{K^C,K} \ve{C}_{K,K}^{-1}$ (where $K^C := \{1, \ldots, n\} \backslash K$) has the following property\footnote{We use the following (abusive but convenient) notation: If $A$ is a given (ordered) set, then $A:\{1, \ldots, |A|\} \to A$ is a function that maps the index $i \in \{1, \ldots, |A|\}$ to $A(i)$, the respective element in $A$. Furthermore, we take $A(0) = 0$ and $A(i) = n + 1$ for $i > |A|$.}: for every $i \in \{1, \ldots, |K^C|\}$, $k \in \{1, \ldots, |K|\}$,
\begin{align*}
X_{i,k} = \left\{ \begin{array}{ll}
0, & K^C(i) \leqslant K(k - 1) \\
\displaystyle \frac{K^C(i) - K(k - 1)}{K(k) - K(k - 1)}, & K(k - 1) \leqslant K^C(i) \leqslant K(k) \\
\displaystyle \frac{K(k + 1) - K^C(i)}{K(k + 1) - K(k)}, & K(k) < K^C(i) \leqslant K(k + 1) \\
0, & K^C(i) > K(k + 1).
\end{array} \right.
\end{align*}
\end{lem}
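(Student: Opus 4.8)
The plan is to sidestep any explicit inversion of $\ve{C}_{K,K}$ and instead verify the claimed formula by checking that the proposed matrix satisfies the linear equation that defines $\ve{X}$. First I would record that $\ve{C}$ is positive definite (it is a Gram matrix, and equivalently the covariance of a discrete Brownian bridge), so every principal submatrix $\ve{C}_{K,K}$ is invertible and $\ve{X} := \ve{C}_{K^C,K}\ve{C}_{K,K}^{-1}$ is the \emph{unique} solution of $\ve{X}\,\ve{C}_{K,K}=\ve{C}_{K^C,K}$. Hence it suffices to take the matrix $\ve{X}$ defined entrywise by the tent formula in the statement and verify this single matrix identity; uniqueness then completes the argument.

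Next I would reduce the matrix identity to a scalar one, row by row and column by column. Writing $p := K^C(i)$ for the row and $q := K(\ell)$ for the column, and using the conventions $K(0)=0$, $K(|K|+1)=n+1$ together with the bridge-consistent endpoint values $C_{0,q}=C_{n+1,q}=0$, I would observe that for a fixed $p$ the row $X_{i,\cdot}$ has at most two nonzero entries: if $a := K(k^\ast)$ and $b := K(k^\ast+1)$ denote the two consecutive elements of $K$ bracketing $p$ (so that $a<p<b$), then only
$$X_{i,k^\ast}=\frac{b-p}{b-a}, \qquad X_{i,k^\ast+1}=\frac{p-a}{b-a}$$
survive, and these two weights sum to $1$. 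The equation $\ve{X}\,\ve{C}_{K,K}=\ve{C}_{K^C,K}$ then collapses, for each $q\in K$, to
$$\frac{b-p}{b-a}\,C_{a,q}+\frac{p-a}{b-a}\,C_{b,q}=C_{p,q}.$$

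The heart of the matter is then an elementary observation about $\ve{C}$ (Lemma~\ref{lem:7}). Viewing $t\mapsto C_{t,q}=\min(t,q)(N-\max(t,q))/N$ as a function of a continuous variable $t$, it is piecewise affine with a \emph{single} kink, located exactly at $t=q$ (slope $(N-q)/N$ for $t<q$, slope $-q/N$ for $t>q$). The left-hand side of the displayed identity is precisely the value at $p$ of the affine interpolant of $C_{\cdot,q}$ through the nodes $a$ and $b$; it therefore equals $C_{p,q}$ if and only if $C_{\cdot,q}$ is affine on $[a,b]$, that is, iff the kink $q$ does not lie in the open interval $(a,b)$. But $a$ and $b$ are \emph{consecutive} elements of $K$, so $(a,b)$ contains no element of $K$ at all; since $q\in K$, necessarily $q\notin(a,b)$, and exactness of the interpolation follows. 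This is nothing but the discrete Markov property of the bridge: $\ve{X}$ is the coefficient matrix of the conditional mean of the $K^C$-coordinates given the $K$-coordinates of a centered Gaussian vector with covariance $\ve{C}$, and conditioning a bridge on its values at the nodes $K$ leaves independent sub-bridges between consecutive nodes, whose conditional means interpolate affinely.

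The step I expect to be most delicate is not conceptual but combinatorial: one must confirm that the four cases of the tent formula genuinely partition the possible positions of $p$ relative to $K(k-1)$, $K(k)$, $K(k+1)$ without gaps or overlaps, so that each row really has exactly the two claimed entries. One must also treat the boundary situations $p<K(1)$ and $p>K(|K|)$, which are absorbed into the same argument only once the virtual nodes $K(0)=0$ and $K(|K|+1)=n+1$ and the endpoint values $C_{0,\cdot}=C_{n+1,\cdot}=0$ are in place; these correspond exactly to the pinned ends $z_1=z_{N+1}=0$ of the random bridge. With the index conventions handled carefully, every case reduces to the single affine-interpolation identity above.
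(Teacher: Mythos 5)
Your proof is correct, and it reaches the result by a verification-plus-uniqueness route rather than the paper's derivation. The paper works with the extended matrix $\ve{\tilde{X}} := \ve{C}_{:,K}\ve{C}_{K,K}^{-1}$ and the equation $\ve{C}_{K,K}\ve{\tilde{X}}^T = \ve{C}_{K,:}$: it first observes that the rows of $\ve{\tilde{X}}$ indexed by elements of $K$ are unit row vectors, and then, using the explicit formula of Lemma~\ref{lem:7}, that $\ve{C}_{K,i}$ depends affinely on $i$ between consecutive elements of $K$, so that $(\ve{\tilde{X}}_{i,:})^T = \ve{C}_{K,K}^{-1}\ve{C}_{K,i}$ is affine in $i$ there; the tent shape then follows as the affine interpolation of unit vectors. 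You instead take the tent matrix as a candidate, check that it satisfies $\ve{X}\,\ve{C}_{K,K} = \ve{C}_{K^C,K}$ --- which collapses to the same key fact from Lemma~\ref{lem:7}, namely that $t \mapsto C_{t,q}$ is piecewise affine with its only kink at $t=q$, hence affine across any interval between consecutive nodes of $K$ --- and then invoke uniqueness. So the two arguments share the same essential ingredient but differ in logical organization: yours requires invertibility of $\ve{C}_{K,K}$ to be justified (the Gram-matrix observation alone gives only positive semidefiniteness, but since the matrix $\ve{A}$ of Lemma~\ref{lem:6} has full column rank, $\ve{C}$ is indeed positive definite; alternatively, invertibility is already presupposed by the lemma's statement), whereas the paper's route derives the formula without having to guess it, at the cost of the extra observation about unit rows at node indices. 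Your treatment of the virtual endpoints $K(0)=0$, $K(|K|+1)=n+1$ via $C_{0,q}=C_{n+1,q}=0$ is consistent with the paper's conventions, and the probabilistic reading (conditional mean of a Gaussian bridge and its Markov property) is a pleasant complement to the paper's random-bridge interpretation, though not needed for the proof itself.
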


The strong irrepresentable condition~\cite{citeulike:5177983} states that in order for the lasso to achieve support recovery, it is sufficient that $| \ve{C}_{K^C,K} \ve{C}_{K,K}^{-1} \ve{s}| < \delta$ for some $\delta < 1$ independent of the number of samples $N$, where $K$ is the support of the true $\ve{x}$, and $\ve{s} = \sgn(\ve{x}_K)$. Figure~\ref{fig:irrep} provides an interpretation of this condition for the FLSA, based on Lemma~\ref{lem:8}. Here, $m_t$ corresponds to a piece-wise constant signal, and $\ve{C}_{t,K} \ve{C}_{K,K}^{-1} \ve{s}$ is also plotted as a function of $t$; notice that this latter plot is consistent with Lemma~\ref{lem:8}, since $\ve{C}_{t,K} \ve{C}_{K,K}^{-1} \ve{s}$ is basically a linear combination (weighted by the entries of $\ve{s}$) of linear spline functions with knots at the change points of $m_t^o$. 

According to the strong irrepresentable condition, $\ve{C}_{t,K} \ve{C}_{K,K}^{-1} \ve{s}$ should be uniformly bounded in magnitude by some $\delta$ for every $t$ which is not a change point of $m_t^o$; as Figure~\ref{fig:irrep} shows that this is not possible, since $\ve{C}_{t,K} \ve{C}_{K,K}^{-1} \ve{s}$ approaches $\pm 1$ linearly at every change point. Notice the resemblance between the shapes of $\ve{C}_{t,K} \ve{C}_{K,K}^{-1} \ve{s}$ and the dual variables $z_t$, \cf Figure~\ref{fig:interp}. From this analogy, it is easy to see that the situation is even worse in the presence of a stair-case, \cf Figure~\ref{fig:dual2}, since in this case $|\ve{C}_{t,K} \ve{C}_{K,K}^{-1} \ve{s}| = 1$ for every $t$ in the segment between two change points of the same sign.

\begin{figure}
\begin{center}
\includegraphics[height=0.7\columnwidth]{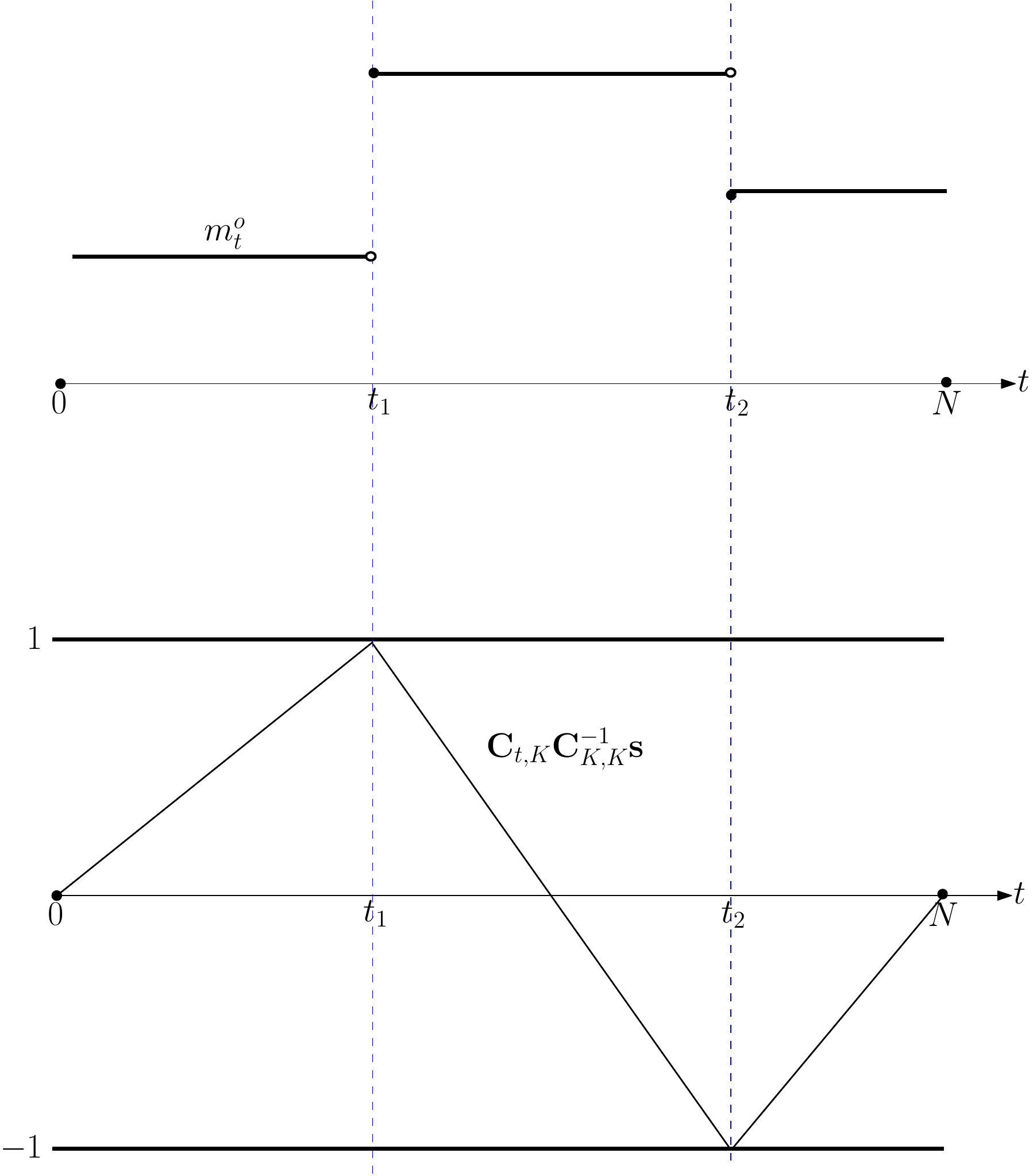}
\end{center}
\caption{Top: Piece-wise constant signal. Bottom: Plot of $\ve{C}_{t,K} \ve{C}_{K,K}^{-1} \ve{s}$, where $K$ is the set of values of $t$ for which $m_t \neq m_{t+1}$, and $\ve{s} = \sgn(m_{t+1} - m_t)$.}
\label{fig:irrep}
\end{figure}

While the strong irrepresentable condition is a sufficient criterion for support recovery of the lasso, other variants of this condition are indeed necessary for such property to hold. The reader is referred to \cite{Buhlmann-vandeGeer-11} for several interesting variants of the irrepresentable condition and related criteria. The following lemma presents a particular variant which is relevant to our analysis of the FLSA.



\begin{lem}[Necessity of the irrepresentable condition of the lasso] \label{lem:10}
Consider the lasso problem
\begin{align} \label{eq:14}
\min\limits_{\ve{x} \in \mathbb{R}^n} \; \frac{1}{2} \|\ve{y} - \ve{A x}\|_2^2 + \lambda \|\ve{x}\|_1,
\end{align}
where $\ve{y} \in \mathbb{R}^N$. Let $\ve{C} := \ve{A}^T \ve{A}$, consider a subset $K \subset \{1, \ldots, n\}$ and assume that the irrepresentable condition\footnote{The matrix $\ve{C}_{K,L}$ , where $K,L \subseteq \{1, \ldots, N\}$, is formed by taking the rows and columns of $\ve{C}$ indexed by $K$ and $L$ respectively.}
\begin{align} \label{eq:15}
| \ve{C}_{K^C,K} \ve{C}_{K,K}^{-1} \ve{s}| < \ve{1}
\end{align}
does not hold component-wisely for some vector $\ve{s} \in \{-1, 1\} ^{|K|}$. If $\ve{x}_0 \in \mathbb{R}^n$ is some vector satisfying $(\ve{x}_0)_i = 0$ for every $i \notin K$, and $\sgn(\ve{x}_0)_{K(i)} = s_i$, and $\ve{y} = \ve{A} \ve{x}_0 + \ve{\varepsilon}$, where $\ve{\varepsilon} \in \mathbb{R}^N$ is such that $\delta := \min _{\ve{\alpha} \in \mathbb{R}^N \backslash \{\ve{0}\}} \min \{P[\ve{\alpha}^T \ve{\varepsilon} > 0], P[\ve{\alpha}^T \ve{\varepsilon} < 0]\} > 0$, then with probability at least $\delta$ the solution $\ve{x}$ of \eqref{eq:14} does not satisfy simultaneously

\begin{itemize}
\item ${x_i} = 0$ for every $i \notin K$,

\item $\sgn(x_{K(i)}) = s_i$,
\end{itemize}

for any value of $\lambda > 0$. In other words, the lasso cannot estimate the zero entries of $\ve{x}_0$ and the sign of its nonzero entries for all possible realizations of $\ve{\varepsilon}$, even if this noise is ``almost negligible''.
\end{lem}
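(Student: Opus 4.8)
The plan is to argue directly from the stationarity (KKT) conditions of the lasso and to exhibit a single event, of probability at least $\delta$ and independent of $\lambda$, on which \emph{no} vector with the prescribed support and signs can satisfy those conditions. First I would recall that any minimizer $\ve x$ of \eqref{eq:14} is characterized by the existence of a subgradient $\ve g \in \partial \|\ve x\|_1$ with $\ve A^T(\ve A \ve x - \ve y) + \lambda \ve g = \ve 0$; concretely, $(\ve A^T(\ve A \ve x - \ve y))_i = -\lambda \sgn(x_i)$ whenever $x_i \ne 0$, while $|(\ve A^T(\ve A \ve x - \ve y))_i| \leqslant \lambda$ whenever $x_i = 0$. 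Then I would suppose, toward a contradiction, that a solution $\ve x$ has support exactly $K$ with $\sgn(x_{K(i)}) = s_i$ and $x_i = 0$ for $i \notin K$.

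Next I would eliminate the active block. Since $x_{K^C} = \ve 0$ we have $\ve A \ve x = \ve A_K \ve x_K$, and using $\ve y = \ve A_K (\ve x_0)_K + \ve\varepsilon$ the stationarity equations indexed by $K$ read $\ve C_{K,K} \ve x_K = \ve A_K^T \ve y - \lambda \ve s$, hence $\ve x_K - (\ve x_0)_K = \ve C_{K,K}^{-1}(\ve A_K^T \ve\varepsilon - \lambda \ve s)$. Substituting this into the inactive block $|(\ve A^T(\ve A \ve x - \ve y))_{K^C}| \leqslant \lambda \ve 1$ and writing $\ve X := \ve C_{K^C,K} \ve C_{K,K}^{-1}$ gives the component-wise requirement
$$ \bigl| \ve X \ve A_K^T \ve\varepsilon - \ve A_{K^C}^T \ve\varepsilon - \lambda \ve X \ve s \bigr| \leqslant \lambda \ve 1. $$
By hypothesis the irrepresentable condition fails, so there is an index $j$ with $|(\ve X \ve s)_j| \geqslant 1$. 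Looking only at the $j$-th row and setting $\ve\alpha := \ve A_K \ve X^T \ve e_j - \ve a_{K^C(j)} = -(\ve I - \ve P_{\ve A_K}) \ve a_{K^C(j)}$ (with $\ve P_{\ve A_K}$ the orthogonal projector onto the range of $\ve A_K$ and $\ve a_i$ the $i$-th column of $\ve A$, the identity following from $\ve X^T = \ve C_{K,K}^{-1}\ve C_{K,K^C}$), this scalar inequality becomes $|\ve\alpha^T \ve\varepsilon - \lambda (\ve X \ve s)_j| \leqslant \lambda$.

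Finally I would extract a $\lambda$-free half-space. If $(\ve X \ve s)_j \geqslant 1$, the inequality forces $\ve\alpha^T \ve\varepsilon \geqslant \lambda((\ve X \ve s)_j - 1) \geqslant 0$; if $(\ve X \ve s)_j \leqslant -1$, it forces $\ve\alpha^T \ve\varepsilon \leqslant \lambda((\ve X \ve s)_j + 1) \leqslant 0$. In both cases the bound has a sign independent of $\lambda$, so on the complementary open half-space $\{\ve\alpha^T \ve\varepsilon < 0\}$ (respectively $\{\ve\alpha^T \ve\varepsilon > 0\}$) the KKT conditions cannot be met with the prescribed support and signs for \emph{any} $\lambda > 0$. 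By the anti-concentration hypothesis each such half-space has probability at least $\delta$ provided $\ve\alpha \neq \ve 0$, and since the event does not depend on $\lambda$ this yields the claim uniformly in $\lambda$; note also that no uniqueness of the minimizer is needed, since the argument rules out every candidate with the given support and signs. I expect the only genuine obstacle to be the verification that $\ve\alpha \neq \ve 0$: from $\ve\alpha = -(\ve I - \ve P_{\ve A_K})\ve a_{K^C(j)}$ this holds exactly when $\ve a_{K^C(j)}$ does not lie in the range of $\ve A_K$, which is guaranteed whenever $\ve A$ has full column rank, as for the FLSA matrix of Lemma~\ref{lem:6}; in the degenerate boundary situation $|(\ve X \ve s)_j| = 1$ with $\ve a_{K^C(j)} \in \mathrm{range}(\ve A_K)$ the $j$-th inequality holds with equality, and one must instead invoke a strictly violated index or the full-rank non-degeneracy of $\ve A$.
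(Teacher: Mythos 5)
Your proof is correct and follows essentially the same route as the paper's: both eliminate the active block via the KKT/subdifferential conditions, reduce the inactive-block condition at a violating index $j$ to a scalar inequality coupling $\lambda(\ve{X}\ve{s})_j$ with a noise functional $\ve{\alpha}^T\ve{\varepsilon}$ whose bad sign event is independent of $\lambda$, and then invoke the anti-concentration hypothesis on that half-space. You are in fact more careful than the paper, which silently assumes $\ve{\alpha} \neq \ve{0}$ when applying the definition of $\delta$; your explicit identification of $\ve{\alpha}$ as the projection residual $-(\ve{I}-\ve{P}_{\ve{A}_K})\ve{a}_{K^C(j)}$ and your flagging of the degenerate case $\ve{a}_{K^C(j)} \in \mathrm{range}(\ve{A}_K)$ with $|(\ve{X}\ve{s})_j| = 1$ address a gap that the paper's own proof leaves open.
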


This lemma states that, for many interesting noise distributions (including Gaussian and $\chi^2$ distributions, \cf Section~\ref{sec:3}) the FLSA cannot achieve exact support recovery. In the next sections, we will focus instead on the possibility or impossibility of achieving \emph{approximate} support set recovery with the FLSA. To this end, we require a new consistency concept, suitable for the study of approximate support set recovery:

\begin{defi} \label{def:eps-sign-consistency}
The FLSA estimate is said to be \emph{$\varepsilon$-sign-consistent} if, given any $\varepsilon, \gamma > 0$, no matter how small, there is an $N_0 \in \mathbb{N}$ such that, for all $N \geqslant N_0$, the probability that there is an $\lambda  > 0$ such that the optimal solution $\ve{m}_N$ of problem \eqref{eq:7} has all its change points inside an $\varepsilon N$-neighborhood of the change points of $\ve{m}_N^0$, and that for every change point of $\ve{m}_N^0$ there is a change point of $\ve{m}_N$ at a distance of at most $\varepsilon N$, is at least $1 - \gamma$. Otherwise, the FLSA estimate is said to be \emph{$\varepsilon$-sign-inconsistent}.
\end{defi}

Let $C(\ve{m}_N)$ and $C(\ve{m}_N^0)$ be the sets of change points of $\ve{m}_N$ and $\ve{m}_N^0$, respectively. Then, the statement in Definition~\ref{def:eps-sign-consistency} that $\ve{m}_N$ has all its change points inside an $\varepsilon N $-neighborhood of those of $\ve{m}_N^0$, and that for every change point of $\ve{m}_N^0$ there is a change point of $\ve{m}_N$ at a distance of at most $\varepsilon N$, can be formulated as: $\dist(C(\ve{m}_N),C(\ve{m}_N^0)) < \varepsilon N$, where
\begin{align*}
\dist(A,B) := \max \left\{ \max_{x \in A} \min_{y \in B} |x-y|, \max_{y \in B} \min_{x \in A} |x-y| \right\}
\end{align*}
for any sets $A, B \subset \{1, \dots, N\}$.

Notice that the standard notion of support set recovery (applied to the FLSA) is recovered by setting $\varepsilon = 0$. The definition of $\varepsilon$-sign-consistency is a more reasonable requirement for change point detection than exact change point recovery, since the latter is in general impossible to achieve. On the other hand, the analysis of this new property is more difficult than for support set recovery, due to the slack provided by $\varepsilon$. Notice in particular that $\varepsilon$-sign-consistency assumes a natural order (and topological) relation between the regressors of the $\ve{A}$ matrix associated with the lasso equivalent of FLSA, while most asymptotic results for the lasso and its variants do not consider any relation between its regressors.

\subsection{Inconsistency of the FLSA} \label{subsec:inconsistency}


In this section we will analyze conditions under which the FLSA is inconsistent. To this end, we need the following basic lemma from fluctuation theory (for a proof, see \citep{Sparre-Andersen-53}[Theorem~3] or \citep{Spitzer-56}[pp. 328]):

\begin{lem}[Combinatorial lemma of fluctuation theory] \label{lem:inconsistency1}
Let $\ve{x} = [x_1\; \cdots $ $x_n]^T$ be a vector of real exchangeable continuous random variables, and
\begin{align*}
u_k := \sum\limits_{i = 1}^k x_i - \frac{k}{n} \sum\limits_{i = 1}^n x_i ,\quad k = 1, \ldots, n.
\end{align*}
Then, for $r = 0, 1, \dots, n-1$, the probability that exactly $r$ of the sums $u_k$ ($k=1, \dots, n$) is positive is $1/n$.
\end{lem}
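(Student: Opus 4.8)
The plan is to prove this by the classical cyclic-permutation (Sparre Andersen / Spitzer) argument: exchangeability makes the joint law invariant under cyclic shifts, while the continuity hypothesis rules out ties. Set $S_0 := 0$ and $S_k := \sum_{i=1}^k x_i$, so that $u_k = S_k - (k/n) S_n$; note $u_0 = u_n = 0$ and that $u$ extends to an $n$-periodic sequence, since $u_{k+n} = S_{k+n} - \frac{k+n}{n}S_n = u_k$ once we extend $S$ by $S_{m+n}=S_n+S_m$. The quantity of interest is $N := \#\{k \in \{1,\dots,n\} : u_k > 0\}$, whose law I must show is uniform on $\{0,1,\dots,n-1\}$.

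First I would introduce the $n$ cyclic shifts of the data: for $j \in \{0,\dots,n-1\}$ let $x^{(j)}$ be the sequence started at position $j+1$, with bridge increments $u^{(j)}_k$ defined analogously. A short computation (splitting into the cases $j+k \leqslant n$ and $j+k > n$, using $S^{(j)}_n = S_n$ together with the periodicity of $u$) gives the clean identity
$$
u^{(j)}_k = u_{j+k} - u_j, \qquad k = 1,\dots,n,
$$
with indices read modulo $n$. Writing $N(j)$ for the number of positive $u^{(j)}_k$, and noting that as $k$ runs over $1,\dots,n$ the index $j+k$ runs over all residues modulo $n$ exactly once, I obtain
$$
N(j) = \#\{m \in \{0,1,\dots,n-1\} : u_m > u_j\},
$$
and in particular $N(0) = N$ because $u_0 = 0$.

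The heart of the argument is the combinatorial fact that, for a fixed realization in which $u_0,\dots,u_{n-1}$ are pairwise distinct, the map $j \mapsto N(j)$ is a bijection of $\{0,\dots,n-1\}$ onto itself. This is immediate from the last display: $N(j)$ counts how many of the $n$ distinct values $u_0,\dots,u_{n-1}$ exceed $u_j$, so the largest $u_j$ yields $N(j)=0$, the next largest $1$, and so on down to $n-1$ for the smallest. Hence $\sum_{j=0}^{n-1} \mathbf{1}[N(j) = r] = 1$ for each $r \in \{0,\dots,n-1\}$, almost surely. Here the continuity hypothesis enters: each difference $u_j - u_i$ ($i \ne j$) is a nontrivial rational linear combination of the $x_i$, hence has no atom at $0$, so ties occur with probability zero.

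It then remains to combine this with exchangeability. Since a cyclic shift is a permutation, exchangeability gives that $x^{(j)}$ has the same law as $x$ for every $j$, whence each $N(j)$ is equal in distribution to $N(0) = N$. Taking expectations in the almost-sure identity and using this equality of laws,
$$
1 = \sum_{j=0}^{n-1} P[N(j) = r] = n\, P[N = r],
$$
so $P[N = r] = 1/n$ for every $r \in \{0,\dots,n-1\}$, as claimed. I expect the main obstacle to be the transformation identity $u^{(j)}_k = u_{j+k}-u_j$ and the ensuing bijection claim; once those are established, exchangeability and the no-ties fact close the argument in a single line. A secondary technical point to handle carefully is deducing the absence of ties (and hence the bijection) rigorously from the stated \emph{continuous} hypothesis.
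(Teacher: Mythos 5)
Your proof is correct, and it is essentially the same argument as the one behind the paper's version of this lemma: the paper does not prove it at all, but cites \citep{Sparre-Andersen-53} and \citep{Spitzer-56}, whose proofs are precisely this cyclic-shift/rank argument (the deterministic bijection $j \mapsto N(j)$ is Spitzer's combinatorial lemma applied to the centered increments $x_i - S_n/n$, and exchangeability then distributes the count uniformly). One caveat on the ``secondary technical point'' you flagged: your claim that a nontrivial linear combination of continuous random variables has no atom at $0$ is \emph{not} valid under marginal continuity alone (take $x_1 = \cdots = x_n$ equal to a single continuous variable, which is exchangeable, yet $u_k \equiv 0$); however, in that case the lemma itself fails, so the hypothesis must be read as joint continuity of the law of $(x_1,\dots,x_n)$ (or, as in the cited references, as the assumption that ties among the $u_k$ are null), and under that reading your no-ties step, and hence the whole proof, is sound.
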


The following result is also needed to establish inconsistency of the FLSA:

\begin{lem}[Lower bound on crossing probability] \label{lem:inconsistency2}
Let $x_t \sim \mathcal{N}(\mu, \sigma^2)$, $t = 1, \dots, N$, be independent random variables, and define $s_t := \sum_{i=1}^t x_i$, $t = 1, \dots, N$, where $s_0 := 0$. Then, given $\delta > 0$ small, there exist $\varepsilon_0 \in (0,1)$, $N_0 \in \mathbb{N}$ such that for all $N\geqslant N_0$ and $0 \leqslant \varepsilon \leqslant \varepsilon_0$ the probability
\begin{align*}
Q := P\left\{ s_t \geqslant \frac{t}{N} s_N \; \forall t \in \{1, \ldots, \lfloor \varepsilon N \rfloor\}  \cup \{N - \lfloor \varepsilon N \rfloor  + 1, \ldots ,N\} \right\}
\end{align*}
is lower bounded by $1/[(1 + \delta) \pi^2 \varepsilon N]$.
\end{lem}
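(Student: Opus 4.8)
The plan is to lower-bound the probability that the partial-sum walk $s_t$ stays above the straight line $\tfrac{t}{N}s_N$ joining its endpoints — equivalently, that the bridge-type deviation $u_t := s_t - \tfrac{t}{N}s_N$ stays nonnegative — but only over the two end-windows $\{1,\dots,\lfloor\varepsilon N\rfloor\}$ and $\{N-\lfloor\varepsilon N\rfloor+1,\dots,N\}$, leaving the bulk unconstrained. The key structural observation is that $u_t$ is exactly the centered partial sum appearing in Lemma~\ref{lem:inconsistency1}, and that it does \emph{not} depend on the common mean $\mu$: replacing $x_t$ by $x_t-\mu$ shifts both $s_t$ and $\tfrac{t}{N}s_N$ by $\mu t$, so $u_t$ is invariant. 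Hence we may assume $\mu=0$ throughout, and $u_t$ is the deviation of a centered Gaussian random walk from its endpoint-interpolating line.

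First I would split the event into its two halves. By exchangeability and the symmetry $t\mapsto N-t$ of the construction (under which $u_t$ transforms into the analogous deviation read from the right end), the two window-events are identically distributed; I would argue they are positively associated — both are increasing events in a suitable coordinatewise sense via the FKG/Gaussian-correlation machinery, or more simply I would bound the joint probability below by the product after conditioning on $s_N$. Conditioning on $s_N = c$ renders the walk a discrete Brownian bridge pinned at $0$ and $c$, and on this bridge the left-window and right-window staying-above-the-chord events become conditionally independent (they involve disjoint time-blocks of the bridge increments), so $Q \geq \mathbb{E}\big[ p_L(s_N)\,p_R(s_N)\big]$ where $p_L,p_R$ are the conditional one-sided probabilities. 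By the left-right symmetry $p_L$ and $p_R$ have the same law, reducing the task to lower-bounding a single window probability and controlling the correlation through $s_N$.

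Next I would estimate the single-window probability. On a window of length $\ell := \lfloor\varepsilon N\rfloor$, the event that the bridge deviation $u_t$ stays nonnegative for all $t$ in the window is, up to the pinning correction, the probability that a random walk of length $\ell$ stays positive. By the Sparre-Andersen theory — and this is precisely where Lemma~\ref{lem:inconsistency1} enters — the probability that all $\ell$ partial sums are positive, for exchangeable continuous increments, is of order $1/\ell$; more precisely the fluctuation-theoretic identity gives an exact combinatorial value, and for the centered-bridge version one obtains an asymptotic of the form $c_0/\ell$ with $c_0$ a universal constant. I would then track the constant explicitly: the factor $1/\pi^2$ and the $1/\varepsilon$ scaling in the claimed bound $1/[(1+\delta)\pi^2\varepsilon N]$ strongly suggest combining two independent end-windows each contributing an arcsine/square-root-law factor of order $1/\sqrt{\ell}\sim 1/\sqrt{\varepsilon N}$, whose product gives the $1/(\varepsilon N)$ order, with the $\pi^2$ emerging from the two $\sqrt{2/\pi}$-type Gaussian constants (since $(\sqrt{2/\pi})^{-2}$ and the bridge normalizations assemble into $\pi^2$). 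I would make this precise by invoking the local central limit theorem to pass from the discrete walk to its Gaussian limit uniformly over the window, picking up $\varepsilon_0$ small enough and $N_0$ large enough that the Gaussian approximation error is absorbed into the slack factor $(1+\delta)^{-1}$.

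The main obstacle I anticipate is \emph{uniformity of the constant} and the simultaneous handling of \emph{both} endpoint constraints together with the pinning at $s_N$. Each individual ingredient — the Sparre-Andersen $1/\ell$ one-sided bound, the local CLT, the arcsine law — is standard, but assembling them so that the product of the two window probabilities is bounded below by $1/[(1+\delta)\pi^2\varepsilon N]$ \emph{uniformly for all} $N\geq N_0$ and all $0\leq\varepsilon\leq\varepsilon_0$ requires care: the pinning correction $\mathbb{E}[\,\cdot\,]$ over $s_N$ must not destroy the leading constant, and the correlation between the two window events (both tied to the shared value $s_N$) must be shown to be asymptotically negligible or favorable. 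I would therefore expect the bulk of the technical work to lie in a careful Gaussian computation on the bridge, showing that the covariance between the left- and right-window indicators is nonnegative (or at least $o(1)$ relative to the leading term), so that the naive product lower bound survives with only the prescribed $(1+\delta)$ loss.
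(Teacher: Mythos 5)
Your strategy---decouple the left and right window events and multiply two per-window estimates of order $\frac{1}{\pi}(\varepsilon N)^{-1/2}$---is genuinely different from the paper's proof, which never decouples the windows: the paper embeds the (exactly Gaussian) walk into a Brownian motion, writes the constrained process as a Brownian bridge $B(t) = W(t) - tW(1)$, uses the time-inversion representation $B(t) = (1-t)W(t/(1-t))$ to map the two end windows onto the two intervals $[a,b] \cup [1/b, 1/a]$ of a \emph{single} Brownian motion, and then evaluates $P\{W(t) \geqslant 0 \;\forall t \in [a,b] \cup [1/b,1/a]\}$ exactly as a four-fold Gaussian integral (Markov property plus reflection), extracting the asymptotics $\frac{a}{b}\left[\frac{1}{\pi^2} + o(a) + O(b)\right]$. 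Your route could be repaired into a shorter proof, but two of your steps are wrong as written. First, the claimed conditional independence given $s_N$ is false: the bridge deviation $u_t := s_t - \frac{t}{N}s_N$ is independent of $s_N$ (so conditioning changes nothing), and the values of $u$ in the two windows are correlated, with $\cov(u_s, u_t) = \sigma^2 s (1 - t/N) > 0$ for $s \leqslant t$; equivalently, conditionally on $s_N$ the increments in disjoint blocks carry a residual covariance $-\sigma^2/N$. What rescues you is precisely that all these pairwise covariances are \emph{nonnegative}: by Pitt's theorem on association of Gaussian vectors, the two increasing events $L$ and $R$ satisfy $P(L \cap R) \geqslant P(L)\,P(R)$. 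Note that the association argument must be applied to the bridge values $u_t$, not to the increments $x_t$, since $L$ and $R$ are not monotone functions of the $x_t$'s.

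Second, your per-window estimate is misattributed and mis-stated. Lemma~\ref{lem:inconsistency1} concerns the \emph{full} bridge pinned at both ends and gives probability exactly $1/n$ for each possible count of positive sums; it says nothing about the event that the bridge is nonnegative on an initial window only, whose probability is of order $(\varepsilon N)^{-1/2}$, not $(\varepsilon N)^{-1}$---your text first asserts ``of order $1/\ell$'' and then silently switches to $1/\sqrt{\ell}$, and only the latter is correct, but it does not follow from Lemma~\ref{lem:inconsistency1}. To get it you need either Sparre--Andersen fluctuation theory for \emph{free} walks (all partial sums positive with probability $\sim 1/\sqrt{\pi \ell}$) together with control of the correction term $\frac{t}{N}s_N$, or, cleaner and closest in spirit to the paper, the exact Brownian embedding plus time inversion and the arcsine law: $P\{W(t) > 0 \;\forall t \in [a,b]\} = \frac{1}{\pi}\arcsin\sqrt{a/b} \geqslant \frac{1}{\pi}\sqrt{a/b}$, which with $a \approx 1/N$ and $b \approx \varepsilon/(1-\varepsilon)$ gives each window probability at least about $\frac{1}{\pi}\sqrt{(1-\varepsilon)/(\varepsilon N)}$, whence association yields $Q \geqslant \frac{1-\varepsilon}{\pi^2 \varepsilon N}\,(1 + o(1)) \geqslant \frac{1}{(1+\delta)\pi^2 \varepsilon N}$ for all $\varepsilon \leqslant \varepsilon_0$ and $N \geqslant N_0$. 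Two further small points: no local CLT is needed anywhere, since the increments are exactly Gaussian and the embedding into $W$ is exact; and the constant $\pi^2$ arises as the product of the two $\frac{1}{\pi}\arcsin$ factors, not from $\sqrt{2/\pi}$-type normalizations.
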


Based on the previous lemmas, we can establish a negative result on the inconsistency of the FLSA when two or more successive change points occur in the same direction (\ie either upwards or downwards). The main ingredient of the proof is that the probability that a random walk, whose ends are fixed at $0$, takes only positive values is asymptotically very small; while this result is classical, the definition of $\varepsilon$-sign-consistency allows some slack in the location of the endpoints of the random walk, which adds some complications to the proof.

\begin{thm}[$\varepsilon$-sign-inconsistency] \label{thm:inconsistency}
Consider the notation of Lemma~\ref{lem:3}, and assume that the data satisfies an equation of the form $\ve{y}_N = \ve{m}_N^0 + \ve{\varepsilon}_N$, where $\ve{\varepsilon}_N$ is a random vector of independent and identically distributed normal components. Furthermore, assume that the following conditions hold:
\begin{itemize}
\item[(a)] $\min_{k = 2, \dots, M^0 + 1} (t_k^0 - t_{k - 1}^0) \geqslant c N$.
\item[(b)] There is a pair of consecutive signs, $s_k^0$ and $s_{k+1}^0$, which are equal.
\end{itemize}
Then, the FLSA is $\varepsilon$-sign-inconsistent (\cf Definition~\ref{def:eps-sign-consistency}). 
\end{thm}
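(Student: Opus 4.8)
The plan is to localize the whole analysis to a single segment lying between two equally-signed change points and to show that on this segment the dual variable of Lemma~\ref{lem:3} is a \emph{driftless} random bridge that is pinned against the same boundary $+\lambda$ at both ends, so that $\varepsilon$-sign-consistency would force this bridge to stay strictly on one side of $\lambda$ --- an event whose probability, even after allowing the $\varepsilon N$ slack of Definition~\ref{def:eps-sign-consistency}, stays bounded away from $1$. First I would fix a pair of consecutive equal signs guaranteed by (b); replacing $\ve{y}_N$ by $-\ve{y}_N$ if necessary, take them positive, so that $\ve{m}_N^0$ performs an ascending staircase with upward jumps at $t_k^0$ and $t_{k+1}^0$. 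Let $I := \{t_k^0, \dots, t_{k+1}^0 - 1\}$, which by (a) has length $L \geqslant cN$ and on which $m_t^0$ is constant, and introduce the two $\varepsilon N$ endzones $W_1, W_2$ around $t_k^0, t_{k+1}^0$ and the central core $J := \{t_k^0 + \lfloor \varepsilon N \rfloor, \dots, t_{k+1}^0 - \lfloor \varepsilon N \rfloor\}$; for $\varepsilon < c$ every point of $J$ lies at distance $> \varepsilon N$ from \emph{every} change point of $\ve{m}_N^0$.

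Next I would translate consistency into a statement about the noise walk $S_t := \sum_{j = t_k^0}^{t - 1} \varepsilon_j$ on $I$. Suppose that for some $\lambda$ the configuration is good. Then $\ve{m}_N$ has a change point in each of $W_1, W_2$ but none in $J$; let $\tau_1$ (resp.\ $\tau_2$) be the rightmost (resp.\ leftmost) of these, so $\ve{m}_N$ is constant on $\{\tau_1, \dots, \tau_2 - 1\} \supseteq J$. Because the signal-driven drift of $z_t$ across the two true jumps is of order $N$ while the noise contributes only order $\sqrt{N}$, the change points the FLSA places near $t_k^0$ and $t_{k+1}^0$ are \emph{upward} with probability tending to one (this is a genuine point to be argued from optimality and Lemma~\ref{lem:4}, since Definition~\ref{def:eps-sign-consistency} constrains only the \emph{locations} of the change points, not their signs), so $z_{\tau_1} = z_{\tau_2} = +\lambda$. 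By Lemma~\ref{lem:3}, on $\{\tau_1, \dots, \tau_2\}$ the dual variable is $z_t = \lambda - P_t$, where $P_t := S_t - \tfrac{t - \tau_1}{\tau_2 - \tau_1} S_{\tau_2}$ is the bridge of the centred residuals, pinned at $0$ at both ends. Since the bridge is pinned at the boundary value $+\lambda$ itself, the constraint $|z_t| \leqslant \lambda$ reduces on the interior to $P_t \geqslant 0$: the bridge must stay one-sided, i.e.\ $S$ must lie above the chord joining $(\tau_1, S_{\tau_1})$ and $(\tau_2, S_{\tau_2})$ on all of the core. This is the crux and exactly the mechanism of Figure~\ref{fig:dual2}: the staircase forces both endpoints onto the same boundary, leaving the driftless bridge no room.

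The key reduction is then purely combinatorial. Let $t^\ast$ be the location of the minimum of $S_t$ over $t \in I$. If $t^\ast \in J$, then for \emph{every} admissible pair $(\tau_1, \tau_2) \in W_1 \times W_2$ we have $S_{t^\ast} \leqslant \min\{S_{\tau_1}, S_{\tau_2}\} \leqslant \text{chord}(t^\ast)$, hence $P_{t^\ast} < 0$ and $z_{t^\ast} > \lambda$; the only way the FLSA can respect $|z_t| \leqslant \lambda$ is to insert a change point inside $J$, which is spurious and at distance $> \varepsilon N$ from all true ones. Thus $\{t^\ast \in J\}$ implies that \emph{no} value of $\lambda$ yields a good configuration, and it remains only to bound $P[t^\ast \in J]$ below. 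Here I would invoke fluctuation theory: since the $\varepsilon_j$ are i.i.d., Lemma~\ref{lem:inconsistency1} (the Sparre--Andersen uniform law) governs the location of the extremum of $S$ and yields, through the discrete arcsine law it entails, $P[t^\ast \in J] \geqslant 1 - \tfrac{4}{\pi}\arcsin\sqrt{\varepsilon/c} - o(1)$, with the two-endzone crossing estimate of Lemma~\ref{lem:inconsistency2} supplying precisely the finite-$N$ lower bound on the probability that the bridge fails to remain positive in both endzones, which is what makes this passage valid uniformly in $N$. Combining with the $o(1)$ sign-forcing error gives $P[\exists\,\lambda\ \text{good}] \leqslant \tfrac{4}{\pi}\arcsin\sqrt{\varepsilon/c} + o(1)$; choosing $\varepsilon$ small enough that this is $< \tfrac12$ and setting $\gamma = \tfrac12$, we get $P[\exists\,\lambda\ \text{good}] < 1 - \gamma$ for all large $N$, contradicting $\varepsilon$-sign-consistency.

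I expect the main obstacle to be exactly the slack built into Definition~\ref{def:eps-sign-consistency}. For a \emph{fixed} pair of endpoints the one-sided-bridge event has the clean probability $1/L$ of Lemma~\ref{lem:inconsistency1}, but the quantifier ``there exists $\lambda$'' lets the estimator slide its boundary change points anywhere in the $\varepsilon N$ endzones, so a naive union over the $\Theta((\varepsilon N)^2)$ candidate chords is hopelessly lossy. The device that tames this quantifier is the reduction to the single event $\{t^\ast \in J\}$, which rules out \emph{all} admissible chords simultaneously; converting the asymptotic arcsine statement into a genuine finite-$N$ bound is then the delicate quantitative step, and is precisely where Lemma~\ref{lem:inconsistency2} is needed. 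The secondary technical point I would be most careful about is the sign-forcing claim $z_{\tau_1} = z_{\tau_2} = +\lambda$: it cannot be read off from Definition~\ref{def:eps-sign-consistency} and must be established from optimality together with the high signal-to-noise ratio implied by (a) and the $\Theta(1)$ jump sizes.
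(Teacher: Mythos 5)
Your core mechanism coincides with the paper's: in a good configuration the two change points that the FLSA places near $t_k^0$ and $t_{k+1}^0$ are pinned at the \emph{same} boundary, so by Lemmas~\ref{lem:3} and \ref{lem:4} the dual variable on the intervening segment equals that boundary value minus a bridge, and $|z_t|\leqslant\lambda$ forces the walk to stay on one side of the chord joining its endpoint values; fluctuation theory then makes this event improbable. Where you genuinely depart from the paper is in how the ``exists $\lambda$'' slack is tamed. The paper conditions on the innermost admissible change-point positions (its event $Q(t_i,t_j)$), gets the one-sided-bridge probability $1/(t_j-t_i+1)$ from Lemma~\ref{lem:inconsistency1}, and absorbs the conditioning by dividing by the endzone-crossing lower bound $1/(\pi^2\varepsilon N)$ of Lemma~\ref{lem:inconsistency2}, arriving at $\pi^2\varepsilon/(c-\varepsilon)$. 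You instead rule out \emph{all} admissible chords with the single event that the argmin of the noise walk falls in the core $J$, and bound its complement by the arcsine law. Your device is cleaner --- it sidesteps the paper's rather delicate conditional-ratio step and, contrary to your own description, makes Lemma~\ref{lem:inconsistency2} unnecessary (the discrete arcsine law for i.i.d.\ symmetric continuous increments is already an exact finite-$N$ statement; Lemma~\ref{lem:inconsistency2} plays no role in your route, and the role you assign it matches neither proof). The price is a quantitatively weaker bound: $\tfrac{4}{\pi}\arcsin\sqrt{\varepsilon/c}=O(\sqrt{\varepsilon})$ versus the paper's $O(\varepsilon)$; both suffice, since one only needs the failure probability small for small $\varepsilon$.

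Three repairs are needed. First, your identity $z_t=\lambda-P_t$ with $P_t$ the \emph{noise} bridge presupposes that $m_t^0$ is constant on all of $\{\tau_1,\dots,\tau_2\}$, which fails when $\tau_1<t_k^0$ or $\tau_2\geqslant t_{k+1}^0$; the correct statement is $z_t=\lambda-(\text{bridge of }y)$, and one must then add the paper's observation (its display \eqref{eq:inconsistence2}) that the partial sums of the staircase signal are convex, so the signal bridge has a definite sign and the one-sidedness of the $y$-bridge still implies one-sidedness of the noise bridge. Second, take $t^\ast$ to be the argmin of $S_t$ over $W_1\cup I\cup W_2$, not over $I$: as written, for $\tau_1$ in the left half of $W_1$ (which lies outside $I$) nothing prevents $S_{\tau_1}<S_{t^\ast}$, and your inequality $S_{t^\ast}\leqslant\min\{S_{\tau_1},S_{\tau_2}\}$ fails; with the enlarged domain the argument goes through and only the constant in the arcsine bound changes. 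Third, the arcsine law for the location of the minimum of a \emph{free} walk is not Lemma~\ref{lem:inconsistency1} itself; you also need Sparre--Andersen's equivalence principle (the argmin position is equidistributed with the number of positive partial sums), which is in the cited reference \citep{Sparre-Andersen-53} but must be invoked explicitly. Finally, on the sign-forcing point ($z_{\tau_1}=z_{\tau_2}=+\lambda$): you are right that Definition~\ref{def:eps-sign-consistency} constrains only locations and that this needs a separate argument; note that the paper's own proof assumes it silently (its use of the bias-free formula of Lemma~\ref{lem:4} and of the pinned value presupposes the estimated signs match the true ones), so flagging it makes your write-up, if anything, more careful on this step than the original.
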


\begin{proof}
Fix $\varepsilon > 0$ sufficiently small, with $\varepsilon < c$. Let us assume, without loss of generality, that $s_k^0 = s_{k + 1}^0 = -1$, and that $\lambda > 0$ is chosen so that $\ve{m}_N$ has change points within the $\varepsilon$-neighborhood of each $t_i^0$. Let $Q(t_i, t_j)$  be the event that $t_i$ is the largest integer in $(t_k^0 - \varepsilon N/2, t_k^0 + \varepsilon N/2)$ and $t_j$ is the smallest integer in $(t_{k + 1}^0 - \varepsilon N/2, t_{k + 1}^0 + \varepsilon N/2)$. Conditioned on $Q(t_i, t_j)$, and assuming that there is no change point between $t_i$ and $t_j$, notice that, by Lemma~\ref{lem:4},
\begin{align*}
m_t = \frac{1}{t_j - t_i} \sum\limits_{s = t_k}^{t_{k + 1} - 1} y_s, \quad t = t_k, \dots, t_{k + 1} - 1,
\end{align*}
hence the condition for $\ve{m}_N$ not having any change points between $t_i$ and $t_j$ is that
\begin{align*}
\left| \frac{t - t_i}{t_j - t_i} \sum\limits_{s = t_i}^{t_j - 1} y_s - \sum\limits_{s = t_i}^t y_s - \lambda \right| < \lambda, \quad t = t_i, \dots, t_j - 1,
\end{align*}
\ie
\begin{align} \label{eq:inconsistence1}
-2 \lambda < \sum\limits_{s = t_i}^t y_s - \frac{t - t_i}{t_j - t_i} \sum\limits_{s = t_i}^{t_j - 1} y_s < 0, \quad t = t_i, \dots, t_j - 1.
\end{align}
Now, $y_t = e_t + m_t^0$, hence
\begin{align} \label{eq:inconsistence2}
&\sum\limits_{s = t_i}^t y_s - \frac{t - t_i}{t_j - t_i} \sum\limits_{s = t_i}^{t_j - 1} y_s \nonumber \\%
&= \sum\limits_{s = t_i}^t e_s - \frac{t - t_i}{t_j - t_i} \sum\limits_{s = t_i}^{t_j - 1} e_s + \sum\limits_{s = t_i}^t m_s^0 - \frac{t - t_i}{t_j - t_i} \sum\limits_{s = t_i}^{t_j - 1} m_s^0 \nonumber \\
&= \sum\limits_{s = t_i}^t e_s - \frac{t - t_i}{t_j - t_i} \sum\limits_{s = t_i}^{t_j - 1} e_s + \sum\limits_{s = t_i}^t \left( m_s^0 - \frac{1}{t_j - t_i} \sum\limits_{r = t_i}^{t_j - 1} m_r^0 \right) \\
&= \sum\limits_{s = t_i}^t e_s - \frac{t - t_i}{t_j - t_i} \sum\limits_{s = t_i}^{t_j - 1} e_s + \sum\limits_{s = t_i}^t \left[ [m_s^0 - m_{t_k^0}^0] - \sum\limits_{r = t_i}^{t_j - 1} \frac{m_r^0 - m_{t_k^0}^0}{t_j - t_i} \right] \nonumber \\
&> \sum\limits_{s = t_i}^t e_s - \frac{t - t_i}{t_j - t_i} \sum\limits_{s = t_i}^{t_j - 1} e_s, \quad t = t_i, \dots, t_j - 1, \nonumber
\end{align}
since $m_s^0 - m_{t_k^0}^0 > 0$ for $t_i \leqslant s < t_k^0$, $m_s^0 - m_{t_k^0}^0 = 0$ for $t_k^0 \leqslant s < t_{k + 1}^0$ and $m_s^0 - m_{t_k^0}^0 < 0$ for $t_{k + 1}^0 \leqslant s < {t_j}$ (\ie $\sum\nolimits_{s = t_i}^t m_s^0 - m_{t_k^0}^0$ is strictly concave, hence its graph lies above the chord determined by the points $(t_i, 0)$ and $(t_j, \sum\nolimits_{s = t_i}^{t_j} m_s^0 - m_{t_k^0}^0)$; this implies that the second term in the fourth line of \eqref{eq:inconsistence2} is strictly positive). Now, denote by $R(t_i, t_j)$ the event that $\sum\nolimits_{s = t_i}^t e_s - \frac{t - t_i}{t_j - t_i} \sum\nolimits_{s = t_i}^{t_j - 1} e_s < 0$ for all $t_i < t < t_k^0 + \varepsilon N/2$ and $t_{k + 1}^0 - \varepsilon N/2 < t < t_j$. By Lemma~\ref{lem:inconsistency2}, given \eg $\delta  = 1$, $P\{ R(t_i, t_j) \}  > 2/(2 \pi^2 N\varepsilon)$ for all $N$ sufficiently large and $\varepsilon$ sufficiently small. Therefore, the probability that \eqref{eq:inconsistence1} holds is upper bounded by
\begin{align*}
&P\left\{ \left. \sum\limits_{s = t_i}^t e_s - \frac{t - t_i}{t_j - t_i} \sum\limits_{s = t_i}^{t_j - 1} e_s < 0 \; \forall t_i < t < t_j \right| Q(t_i, t_j) \right\} \\\
&\qquad \qquad \qquad \qquad \qquad = \frac{P\left\{ \sum\limits_{s = t_i}^t e_s - \frac{t - t_i}{t_j - t_i} \sum\limits_{s = t_i}^{t_j - 1} e_s < 0 \; \forall t_i < t < t_j \right\}}{P\{ R(t_i, t_j)\}} \\
&\qquad \qquad \qquad \qquad \qquad < \frac{\pi^2 N \varepsilon}{t_j - t_i + 1} \\
&\qquad \qquad \qquad \qquad \qquad < \frac{\pi^2 N \varepsilon}{(c - \varepsilon) N} \\
&\qquad \qquad \qquad \qquad \qquad = \frac{\pi^2 \varepsilon}{c - \varepsilon},
\end{align*}
where Lemma~\ref{lem:inconsistency1} has been used. Therefore, taking expectations over $Q(t_i, t_j)$ we see that for $1 - \gamma  \geqslant \pi^2 \varepsilon / (c - \varepsilon)$ the statement of the theorem holds.
\end{proof}

\subsection{Conditions for $\varepsilon$-sign-consistency} \label{subsec:consistency}
The characterization provided by Lemma \ref{lem:3} can be used to establish conditions for the (almost) sparse support recovery of the optimal solution of problem~\eqref{eq:7}. As in the proof of Theorem~\ref{thm:inconsistency}, the slack provided by the definition of $\varepsilon$-sign-consistency needs some special attention; in particular, to show that the dual variables $z_t$ do not cross the $\pm \lambda$ boundary within a given segment, a bound on the min-max value of a sample average has been developed (\cf Lemma~\ref{lem:B2}), which seems to be of independent interest.

\begin{thm}[$\varepsilon$-sign-consistency] \label{thm:4}
Consider the notation of Lemma~\ref{lem:3}, and assume that the data satisfies an equation of the form $\ve{y}_N = \ve{m}_N^0 + \ve{\varepsilon}_N$, where $\ve{\varepsilon}_N$ is a random vector of independent sub-exponential continuous components such that $\kappa := \sup _{N \in \mathbb{N}} \max_{i \in \{1, \ldots, N\}} \| (\ve{\varepsilon}_N)_i\|_{\psi_1} < \infty$ (see Appendix~\ref{app:B} for definitions of these quantities). Furthermore, suppose that the following conditions are satisfied\footnote{The superscript $0$ is used to denote the ``true'' values of $M$, $t_1, \dots, t_M$, and so on.}:
\begin{itemize}
\item[(a)] $M^0 \leqslant M_1$.

\item[(b)] $\min\limits_{k = 2, \ldots, M^0 + 1} (t_k^0 - t_{k - 1}^0) \geqslant M_2 N$.

\item[(c)]  $0 < M_3 \leqslant \min _{k = 2, \ldots, M^0} |m_{t_k^0}^0 - m_{t_k^0 - 1}^0| \leqslant M_4$.

\item[(d)] $\lambda_N = M_5 N^{c_1}$ for some $1/2 < c_1 < 1$.

\item[(e)] All consecutive signs $s_k$ are different, \ie $s_k = -s_{k + 1}$ for all $k = 1, \ldots, M^0 - 2$.
\end{itemize}
Then, the optimal solution $\ve{m}_N$ of problem~\eqref{eq:7} with $\lambda = \lambda_N$ is $\varepsilon$-sign-consistent, \ie it satisfies
\begin{itemize}
\item $(\ve{m}_N)_{(K_N^\varepsilon )^C} = \ve{0}$,

\item For every $i \in K_N^0 := \supp(\ve{m}_N^0)$, there is a $k \in I_i^{\varepsilon ,N}$ such that $\sgn(\ve{m}_N)_k = \sgn(\ve{m}_N^0)_i$,
\end{itemize}
where $I_l^{\varepsilon, N} := \{k \in \{1, \ldots, N - 1\}:\; |k - l| < \varepsilon N\}$ for $i \in \{1, \ldots, N - 1\}$, and $K_N^\varepsilon := \bigcup\nolimits_{i \in K_N^0} I_i^{\varepsilon, N}$.
\end{thm}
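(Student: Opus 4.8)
The plan is to verify the Karush--Kuhn--Tucker characterization of Lemma~\ref{lem:3} for a candidate solution whose change points sit at (or within an $\varepsilon N$-buffer of) the true change points $t_1^0, \ldots, t_{M^0-1}^0$ with signs $s_k^0$, translating everything into statements about the dual random bridge $z_t = \sum_{j=1}^{t-1}(m_j - y_j)$. By Lemma~\ref{lem:4}, once the transition times and signs are fixed the candidate means are explicit, so $z_t$ in each true flat segment is the superposition of a \emph{deterministic drift} coming from the bias term $\lambda(s_{k+1}-s_k)/(t_{k+1}-t_k)$ and a \emph{random fluctuation} coming from the centered partial sums of the noise $e_t$. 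The whole argument then reduces to two claims: (i) no boundary touches occur in the \emph{core} of any segment (points at distance $>\varepsilon N$ from every true change point), so there are no false change points there; and (ii) the boundary $\pm\lambda$ is actually reached within $\varepsilon N$ of each true change point, with the sign dictated by the direction of the true jump.

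For claim (i), the crucial point is that, under the alternating-sign hypothesis (e), $s_{k+1}-s_k = \pm 2$, so the deterministic part of $z_t$ sweeps \emph{linearly} from $\lambda s_k$ at $t_k^0$ to $\lambda s_{k+1} = -\lambda s_k$ at $t_{k+1}^0$; hence at every $t$ in the core it is bounded away from $\pm\lambda$ by a margin of order $\varepsilon \lambda_N = \varepsilon M_5 N^{c_1}$ (using (b) so that the segment length is of order $N$). The random fluctuation is the maximum over the segment of a centered partial sum of $N$ independent sub-exponential variables, which by the $\psi_1$ bound $\kappa$ and a maximal inequality (Lemma~\ref{lem:B2}) is $O(\sqrt{N}\,)$ with overwhelming probability. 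Since $c_1 > 1/2$, the deterministic margin dominates this fluctuation for $N$ large, forcing $|z_t| < \lambda$ strictly throughout every core and thus ruling out spurious change points, \ie $(\ve{m}_N)_{(K_N^\varepsilon)^C} = \ve{0}$.

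For claim (ii), the same drift drives $z_t$ toward $\pm\lambda$ as $t$ approaches a true change point, and one must check that the boundary is genuinely attained nearby with the right sign. Here conditions (c) and (d) enter: the magnitude $|m_{t_k^0}^0 - m_{t_k^0-1}^0| \in [M_3, M_4]$ guarantees a detectable jump, while $\lambda_N = M_5 N^{c_1}$ with $c_1 < 1$ makes the bias $\lambda_N/(t_{k+1}-t_k) = \Theta(N^{c_1-1}) \to 0$, so the candidate means converge to the true means and the estimated jump inherits the sign $s_k^0$ through the KKT sign condition $z_{t_k} = \lambda \sgn(m_{t_k}-m_{t_k-1})$. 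This yields, within each $\varepsilon N$-buffer, at least one boundary touch of the correct sign, establishing the second bullet of the conclusion. The immobility Lemma~\ref{lem:5} can then be invoked to confirm that this structure is stable as $\lambda$ is varied, so that the single choice $\lambda = \lambda_N$ realizes all detections simultaneously.

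The hard part will be the uniform control of the random-bridge fluctuations and, more subtly, the self-referential nature of the problem: the optimal means depend on the optimal change points, which are exactly what we are trying to locate. The clean way around this is the candidate-plus-KKT route above, but because exact recovery is impossible (Lemma~\ref{lem:10}), the candidate with change points pinned \emph{exactly} at $t_k^0$ will violate $|z_t|\le\lambda$ inside the buffers; the argument must therefore allow the optimal solution to absorb these violations by introducing extra change points confined to the $\varepsilon N$-buffers, while still proving that no violation escapes a buffer. Quantifying the maximal deviation of the centered partial sums sharply enough (the content of Lemma~\ref{lem:B2}) against the margin $\varepsilon M_5 N^{c_1}$, uniformly over all $t$ and all $M^0 \le M_1$ segments, and doing so for general sub-exponential noise rather than merely Gaussian, is the main technical obstacle.
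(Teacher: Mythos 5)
Your proposal is correct and follows essentially the same route as the paper's own proof: your claim (ii) is the paper's event $A$ (the dual bridge touches $\pm\lambda$ inside each $\varepsilon N$-buffer with the sign forced by the true jump, conditions (c) and (d) making the jump dominate the bias and the segment noise averages), and your claim (i) is the paper's event $B$ (under the alternating-sign condition the deterministic drift of order $M_5 N^{c_1-1}$ per step dominates the centered sub-exponential fluctuations via Lemma~\ref{lem:B2}/Corollary~\ref{cor:B1}, which is exactly where $c_1 > 1/2$ enters). Even the difficulty you single out --- that the optimizer must be allowed extra change points confined to the buffers, tracked through the first and last touching instants --- is precisely what the paper handles with its $\delta_k^{(1)}, \delta_k^{(2)}$ bookkeeping.
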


\begin{proof}
To simplify the presentation of the proof, consider as an example the sketch in Figure~\ref{fig:1}.

\begin{figure}
\begin{center}
\includegraphics[height=0.4\columnwidth]{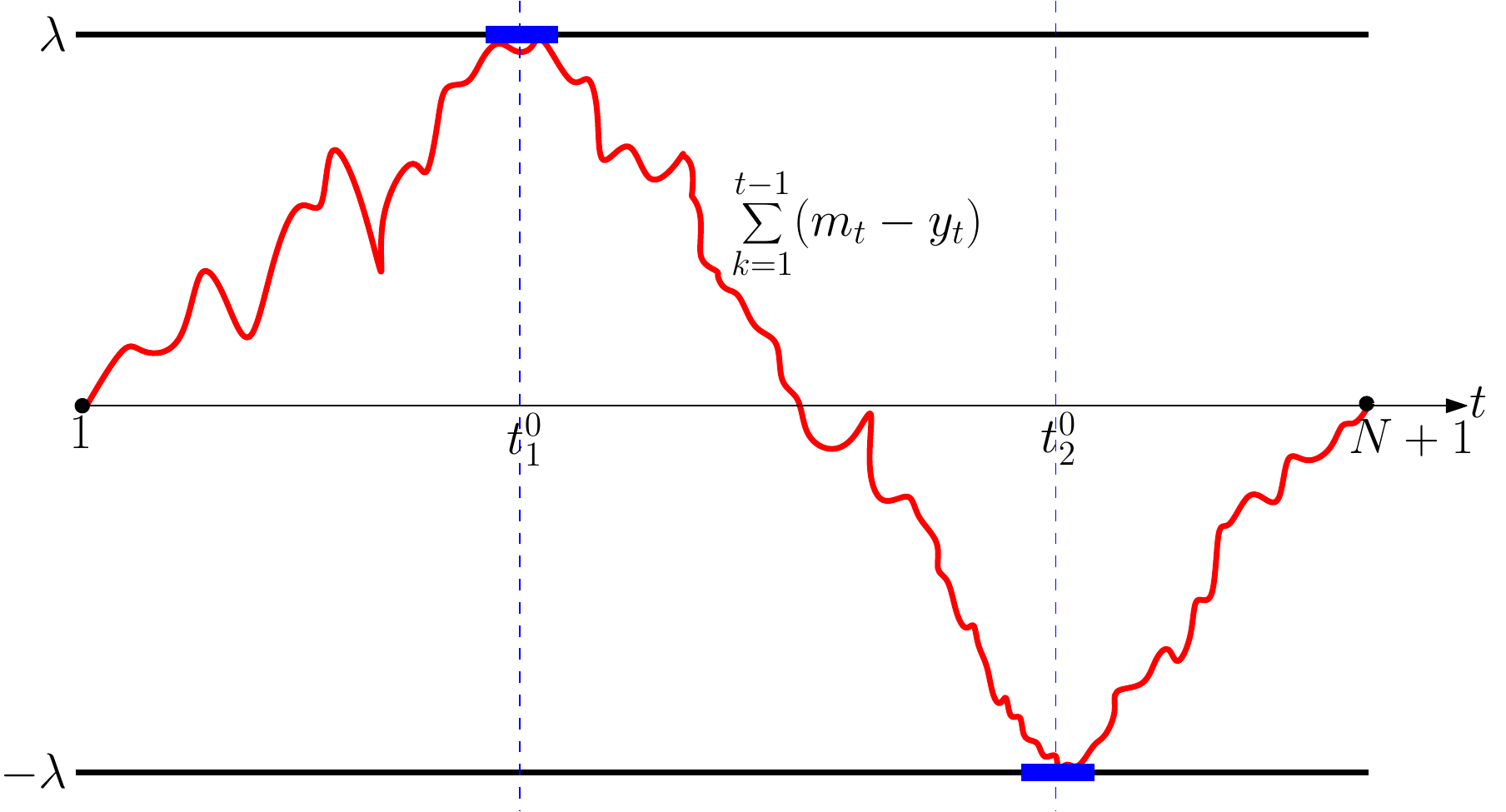}
\end{center}
\caption{Sketch of $z_t = \sum\nolimits_{k = 1}^{t-1} (m_t - y_t)$ (red). According to the optimality conditions from Lemma~\ref{lem:3}, this curve lies between the levels $\lambda$ and $-\lambda$ (black), starting and ending at $0$. The true change points are located at $t = t_1^0$ and $t = t_2^0$. $\varepsilon$-sign consistency means that the curve touches the levels $\lambda$ and $-\lambda$ within an $\varepsilon$-neighborhood of $t_1^0$ and $t_2^0$ (blue bars). }
\label{fig:1}
\end{figure}

Our first goal is to show that, with the given choice of $\lambda$, a sequence $\{m_t\}$, constant along the intervals of $(K_N^\varepsilon)^C$, can be chosen such that the random walk $\{\sum\nolimits_{k = 1}^{t-1} (m_k - y_k)\}$ touches the levels $\pm \lambda$ at the $\varepsilon$-neighborhoods of the true change points, $I_{t_k^0}^{\varepsilon, N}$, where the sign of the level being reached at each $t_k^0$ satisfies the second condition in \eqref{eq:10}. We will require additionally that $\{\sum\nolimits_{k = 1}^{t-1} (m_k - y_k)\}$ reaches $\pm \lambda$ at least once before $t_k^0$ and after $t_k^0$ within $I_{t_k^0}^{\varepsilon, N}$. Let us denote this event as $A$. If, for each $t_k^0$ we denote by $t_k^0 - \delta_k^{(1)}$ and   $t_k^0 + \delta_k^{(2)}$ the first and last touching instants within $I_{t_k^0}^{\varepsilon, N}$ (where, of course, $|\delta_k^{(1)}|, |\delta_k^{(2)}| < \varepsilon$), then, according to Lemma~\ref{lem:4}, this event is equivalent to
\begin{multline} \label{eq:25}
\sgn\left(\frac{1}{t_{k + 1}^0 - \delta_{k + 1}^{(1)} - t_k^0 - \delta _k^{(2)}} \left[ \sum\limits_{t = t_k^0 + \delta_k^{(2)}}^{t_{k + 1}^0 - \delta_{k + 1}^{(1)} - 1} y_t + \lambda (s_{k + 1} - s_k) \right] \right. \\
\left. - \frac{1}{t_k^0 - \delta_k^{(1)} - t_{k - 1}^0 - \delta_{k - 1}^{(2)}} \left[ \sum\limits_{t = t_{k - 1}^0 + \delta_{k - 1}^{(2)}}^{t_k^0 - \delta_k^{(1)} - 1} y_t + \lambda (s_k - s_{k - 1}) \right] \right) = s_k,
\end{multline}
where $s_k := \sgn(m_{t_k^0} - m_{t_{k - 1}^0})$. We can assume without loss of generality that $s_k = (-1)^{k + 1}$, and take into account the nature of the data, with which \eqref{eq:25} can be rewritten as
\begin{align*}
&\sgn\left( \left[ \frac{1}{t_{k + 1}^0 - t_k^0 - \delta_{k + 1}^{(1)} - \delta_k^{(2)}} + \frac{1}{t_k^0 - t_{k - 1}^0 - \delta_k^{(1)} - \delta_{k - 1}^{(2)}} \right] 2 \lambda ( - 1)^k + \right .\\
&\frac{1}{t_{k + 1}^0 - \delta_{k + 1}^{(1)} - t_k^0 - \delta_k^{(2)}} \sum\limits_{t = t_k^0 + \delta_k^{(2)}}^{t_{k + 1}^0 - \delta_{k + 1}^{(1)} - 1} \varepsilon_t - \frac{1}{t_k^0 - \delta_k^{(1)} - t_{k - 1}^0 - \delta_{k - 1}^{(2)}} \sum\limits_{t = t_{k - 1}^0 + \delta_{k - 1}^{(2)}}^{t_k^0 - \delta_k^{(1)} - 1} \varepsilon_t \\
&\hspace{8cm}+ m_{t_k^0}^0 - m_{t_{k - 1}^0}^0 \Bigg) = (-1)^{k + 1}.
\end{align*}
In order for the condition to hold, it is sufficient to require that
\begin{align*}
&\left| 2 \lambda \left[ \frac{1}{t_{k + 1}^0 - t_k^0 - \delta_{k + 1}^{(1)} - \delta_k^{(2)}} + \frac{1}{t_k^0 - t_{k - 1}^0 - \delta_k^{(1)} - \delta_{k - 1}^{(2)}} \right] + \right. \\
&\left. \frac{1}{t_{k + 1}^0 - t_k^0 - \delta_{k + 1}^{(1)} - \delta_k^{(2)}} \sum\limits_{t = t_k^0 + \delta_k^{(2)}}^{t_{k + 1}^0 - \delta_{k + 1}^{(1)} - 1} \varepsilon_t - \frac{1}{t_k^0 - t_{k - 1}^0 - \delta_k^{(1)} - \delta_{k - 1}^{(2)}} \sum\limits_{t = t_{k - 1}^0 + \delta_{k - 1}^{(2)}}^{t_k^0 - \delta_k^{(1)} - 1} \varepsilon_t  \right| \\
&\hspace{10cm} \leqslant |m_{t_k^0}^0 - m_{t_{k - 1}^0}^0|,
\end{align*}
and, due to the assumptions of the theorem, it is enough to require that
\begin{align*}
&\left| \frac{1}{t_{k + 1}^0 - t_k^0 - \delta_{k + 1}^{(1)} - \delta_k^{(2)}}\sum\limits_{t = t_k^0 + \delta_k^{(2)}}^{t_{k + 1}^0 - \delta_{k + 1}^{(1)} - 1} \varepsilon_t - \frac{1}{t_k^0 - t_{k - 1}^0 - \delta_k^{(1)} - \delta_{k - 1}^{(2)}} \sum\limits_{t = t_{k - 1}^0 + \delta_{k - 1}^{(2)}}^{t_k^0 - \delta_k^{(1)} - 1} \varepsilon _t \right| \\
&\hspace{8.2cm} \leqslant M_4 - \frac{4 M_5}{(M_2 - 2 \varepsilon)} N^{c_1 - 1}.
\end{align*}
Therefore, the probability of event $A$ is bounded by
\begin{align*}
P\{A\} %
&\geqslant P\left\{ \left| \frac{1}{t_{k + 1}^0 - t_k^0 - \delta_{k + 1}^{(1)} - \delta_k^{(2)}} \sum\limits_{t = t_k^0 + \delta_k^{(2)}}^{t_{k + 1}^0 - \delta_{k + 1}^{(1)} - 1} \varepsilon_t \right. \right. \\
&\qquad \left. \left. - \frac{1}{t_k^0 - t_{k - 1}^0 - \delta_k^{(1)} - \delta_{k - 1}^{(2)}} \sum\limits_{t = t_{k - 1}^0 + \delta_{k - 1}^{(2)}}^{t_k^0 - \delta_k^{(1)} - 1} \varepsilon _t \right| \leqslant M_4 - \frac{4 M_5}{(M_2 - 2 \varepsilon)} N^{c_1 - 1}, \right. \\
&\hspace{8cm} \text{ for all } k = 1, \ldots, M^0 \Bigg\} \\
&\geqslant 1 - 2 M_1 \exp \left[ -c(M_2 - 2 \varepsilon) N \min \left( \frac{1}{\kappa^2} \left( M_4 - \frac{4 M_5}{(M_2 - 2 \varepsilon)} N^{c_1 - 1} \right)^2, \right. \right. \\
&\hspace{6.2cm} \left. \left. \frac{1}{\kappa} \left( M_4 - \frac{4 M_5}{(M_2 - 2 \varepsilon)} N^{c_1 - 1} \right) \right) \right],
\end{align*}
where in the last step we used Lemma~\ref{lem:B1}, and $c > 0$ is an absolute constant.

The next step is to show that it is possible to select $\{m_t\}$ satisfying $A$ and the condition that   $\{\sum\nolimits_{k = 1}^{t-1} (m_k -y_k)\}$ does not reach the levels $\pm$ in $(K_N^\varepsilon)^C$. Let us denote this latter event as $B$. To compute the probability of $B$, consider a given intermediate segment $\{ t_{k - 1}^0, \ldots, t_k^0\}$ where $2 \leqslant k \leqslant M^0 - 1$, for example, the segment $\{t_1^0, \ldots, t_2^0\}$ from Figure~\ref{fig:1}. The event that the random walk touches at least one of the levels $\pm$ in the segment $\{t_{k - 1}^0 + \varepsilon N, \ldots, t_k^0 - \varepsilon N\}$ can be decomposed in the events of separately reaching $\lambda$ and $-\lambda$; let us consider the former sub-event (reaching $\lambda$), since the other can be treated similarly. For simplicity, let us assume also that $s_k = \sgn(m_{t_k^0} - m_{t_{k - 1}^0}) = -1$. We have that
\begin{align} \label{eq:26}
&P\left\{ \sum\limits_{i = 1}^{t-1} (m_i - y_i) > \lambda \text{ for some } t_{k - 1}^0 + \varepsilon N \leqslant t \leqslant t_k^0 - \varepsilon N \right\} \\
&= P\left\{ \sum\limits_{i = t_{k - 1}^0}^{t-1} (m_i - y_i) > \lambda - \sum\limits_{i = 1}^{t_{k - 1}^0 - 1} (m_i - y_i) \text{ for some } t_{k - 1}^0 + \varepsilon N \leqslant t \leqslant t_k^0 - \varepsilon N \right\} \nonumber \\
&\leqslant P\left\{ \sum\limits_{i = t_{k - 1}^0}^{t-1} (m_i - y_i) > 0 \text{ for some } t_{k - 1}^0 + \varepsilon N \leqslant t \leqslant t_k^0 - \varepsilon N \right\}. \nonumber
\end{align}
Using Lemma~\ref{lem:5}, \eqref{eq:26} can be upper bounded as follows:
\begin{align} \label{eq:27}
&P\left\{ \sum\limits_{i = t_{k - 1}^0}^{t-1} (m_i - y_i) > 0 \text{ for some } t_{k - 1}^0 + \varepsilon N \leqslant t \leqslant t_k^0 - \varepsilon N \right\} \\
&= P\left\{ \sum\limits_{i = t_{k - 1}^0}^{t-1} \left( \frac{1}{t_k^0 - \delta_k^{(1)} - t_{k - 1}^0 + \delta_{k - 1}^{(2)}} \left[ \sum\limits_{j = t_{k - 1}^0 + \delta_{k - 1}^{(2)}}^{t_k^0 - \delta_k^{(1)} - 1} (m_j^0 + \varepsilon_j)  - 2\lambda \right] - m_i^0 - \varepsilon_i \right) \right. \nonumber \\
&\hspace{5.5cm} \left. > 0 \text{ for some } t_{k - 1}^0 + \varepsilon N \leqslant t \leqslant t_k^0 - \varepsilon N \rule{0cm}{0.85cm} \right\} \nonumber \\
&\leqslant P\left\{ \frac{1}{t - t_{k - 1}^0} \sum\limits_{i = t_{k - 1}^0}^{t-1} \left( \tilde{\varepsilon}_i - \frac{1}{t_k^0 - \delta_k^{(1)} - t_{k - 1}^0 + \delta_{k - 1}^{(2)}} \sum\limits_{j = t_{k - 1}^0 + \delta_{k - 1}^{(2)}}^{t_k^0 - \delta_k^{(1)} - 1} \varepsilon_j \right) > \right. \nonumber \\
&\hspace{2.5cm} \left. \frac{2M_5 N^{c_1}}{t_k^0 - \delta_k^{(1)} - t_{k - 1}^0 + \delta_{k - 1}^{(2)}} \text{ for some } t_{k - 1}^0 + \varepsilon N \leqslant t \leqslant t_k^0 - \varepsilon N \rule{0cm}{0.85cm} \right\} \nonumber \\
&= P\left\{ \max\limits_{\varepsilon N \leqslant t \leqslant t_k^0 - t_{k - 1}^0 - \varepsilon N} \frac{1}{t} \sum\limits_{i = 0}^{t-1} \left( \tilde{\varepsilon}_{i + t_{k - 1}^0} - \frac{1}{t_k^0 - \delta_k^{(1)} - t_{k - 1}^0 + \delta_{k - 1}^{(2)}} \sum\limits_{j = t_{k - 1}^0 + \delta_{k - 1}^{(2)}}^{t_k^0 - \delta_k^{(1)} - 1} \varepsilon_j \right) \right. \nonumber \\
&\hspace{7.7cm} \left. > \frac{2 M_5 N^{c_1}}{t_k^0 - \delta_k^{(1)} - t_{k - 1}^0 + \delta_{k - 1}^{(2)}} \rule{0cm}{0.85cm} \right\}, \nonumber
\end{align}
where $\tilde{\varepsilon}_t := -\varepsilon_t$. From Corollary~\ref{cor:B1},
\begin{align*}
&P\left\{ \max\limits_{\varepsilon N \leqslant t \leqslant t_k^0 - t_{k - 1}^0 - \varepsilon N} \frac{1}{t} \sum\limits_{i = 0}^{t-1} \left( \tilde{\varepsilon}_{i + t_{k - 1}^0} - \frac{1}{t_k^0 - \delta_k^{(1)} - t_{k - 1}^0 + \delta_{k - 1}^{(2)}} \sum\limits_{j = t_{k - 1}^0 + \delta_{k - 1}^{(2)}}^{t_k^0 - \delta_k^{(1)} - 1} \varepsilon_j \right) \right. \\
&\hspace{8cm} \left. > \frac{2 M_5 N^{c_1}}{t_k^0 - \delta_k^{(1)} - t_{k - 1}^0 + \delta_{k - 1}^{(2)}} \rule{0cm}{0.85cm} \right\}
\end{align*}
\begin{align*}
&\leqslant P\left\{ \max\limits_{\delta_{k - 1}^{(2)} \leqslant t \leqslant N} \frac{1}{t} \sum\limits_{i = 0}^{t-1} \left( \tilde{\varepsilon}_{i + t_{k - 1}^0} - \frac{1}{t_k^0 - \delta_k^{(1)} - t_{k - 1}^0 + \delta_{k - 1}^{(2)}} \sum\limits_{j = t_{k - 1}^0 + \delta_{k - 1}^{(2)}}^{t_k^0 - \delta_k^{(1)} - 1} \varepsilon_j \right) \right. \\
&\hspace{10cm}\left. > 2 M_5 N^{c_1 - 1} \rule{0cm}{0.85cm} \right\} \\
&\leqslant 2[(1 - \varepsilon) N + 1]\exp \left[ -\frac{c \varepsilon}{1 - \varepsilon} \min \left( \frac{4 M_5^2 N^{2 c_1 - 1}}{\kappa^2}, \frac{2 M_5 N^{c_1}}{\kappa} \right) \right],
\end{align*}
where $c > 0$ is an absolute constant. In the case of the initial segment, $\{1, \ldots, t_1^0\}$, a similar calculation gives
\begin{multline*}
P\left\{ \sum\limits_{i = 0}^{t-1} (m_i - y_i)  > \lambda \text{ for some } 1 \leqslant t \leqslant t_1^0 - \varepsilon N \right\} \\
\leqslant 2 [(1 - \varepsilon) N + 1] \exp \left[ -\frac{c \varepsilon}{1 - \varepsilon} \min \left( \frac{M_5^2 N^{2 c_1 - 1}}{\kappa^2}, \frac{M_5 N^{c_1}}{\kappa} \right) \right],
\end{multline*}
and an analogous computation provides a bound for the final segment $\{t_{M^0 - 1}^0,$ $\ldots, N\} $.

Combining all the previous results (for the full set of segments $\{t_{k - 1}^0, \ldots, t_k^0\}$) gives the following bound for the probability of event $B$:
\begin{align*}
P\{ B\} %
&\geqslant 1 - 8[(1 - \varepsilon) N + 1] \exp \left[ -\frac{c \varepsilon}{1 - \varepsilon} \min \left( \frac{M_5^2 N^{2 c_1 - 1}}{\kappa^2}, \frac{M_5 N^{c_1}}{\kappa} \right) \right] \hfill \\
&\quad -4 (M^0 - 1) [(1 - \varepsilon) N + 1] \exp \left[ -\frac{c \varepsilon}{1 - \varepsilon} \min \left( \frac{4 M_5^2 N^{2 c_1 - 1}}{\kappa^2}, \frac{2 M_5 N^{c_1}}{\kappa} \right) \right] \\
&\geqslant 1 - 8[(1 - \varepsilon) N + 1] \exp \left[ -\frac{c \varepsilon}{1 - \varepsilon} \min \left( \frac{M_5^2 N^{2 c_1 - 1}}{\kappa^2}, \frac{M_5 N^{c_1}}{\kappa} \right) \right] \\
&\quad - 4 (M_1 - 1) [(1 - \varepsilon) N + 1] \exp \left[ -\frac{c \varepsilon}{1 - \varepsilon} \min \left( \frac{4 M_5^2 N^{2 c_1 - 1}}{\kappa^2}, \frac{2 M_5 N^{c_1}}{\kappa} \right) \right].
\end{align*}
Therefore, $P\{ A\}, P\{ B\} \xrightarrow{N \to \infty} 1$, which establishes the $\varepsilon$-sign consistency of the solution of problem~\eqref{eq:7}. This concludes the proof.
\end{proof}

\begin{cor}[Rate of convergence of $\varepsilon$-sign-consistency] \label{cor:1}
Under the conditions of Theorem~\ref{thm:4}, the probability $P_N$ that the FLSA fails to recover the true support of $\ve{m}_N^o$ is dominated by $\exp(-N^{2 c_1 - 1})$ (recall that $1/2 < c_1 < 1$ is the exponent of $\lambda$), in the sense that there exists a $C > 0$ such that
\begin{align*}
\lim_{N \to \infty} \frac{-\ln P_N}{N^{2 c_1 - 1}} \leqslant C.
\end{align*}
\end{cor}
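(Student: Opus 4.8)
The plan is to read the convergence rate directly off the two probability estimates already produced in the proof of Theorem~\ref{thm:4}. There the successful (approximate) support recovery of the FLSA was shown to be implied by the simultaneous occurrence of the two events $A$ and $B$ introduced in that proof, so that $\{A \cap B\} \subseteq \{\text{success}\}$ and hence $P_N = P\{\text{failure}\} \leqslant P\{A^C\} + P\{B^C\} = (1 - P\{A\}) + (1 - P\{B\})$. Both $1 - P\{A\}$ and $1 - P\{B\}$ were bounded above by explicit expressions of the form (polynomial in $N$) $\times \exp(-(\text{exponent}))$, so the entire corollary reduces to deciding which of these two bounds decays the slowest and checking that the polynomial prefactors are negligible after taking logarithms.

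First I would record the two exponents. The bound on $1 - P\{A\}$ has exponent proportional to $N \min\bigl(\kappa^{-2}(M_4 - c N^{c_1 - 1})^2,\ \kappa^{-1}(M_4 - c N^{c_1 - 1})\bigr)$; since $c_1 < 1$ gives $M_4 - c N^{c_1 - 1} \to M_4 > 0$, this exponent is of order $N$. The bound on $1 - P\{B\}$ has exponent proportional to $\min\bigl(\kappa^{-2} M_5^2 N^{2 c_1 - 1},\ \kappa^{-1} M_5 N^{c_1}\bigr)$, and because $1/2 < c_1 < 1$ forces $2 c_1 - 1 < c_1$, for all large $N$ this minimum is attained by the first argument and is of order $N^{2 c_1 - 1}$. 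The decisive comparison is then $N^{2 c_1 - 1} = o(N)$, which holds precisely because $2 c_1 - 1 < 1$; thus $1 - P\{B\}$ is the dominant contribution and, for $N$ large, $P_N \leqslant K N \exp(-c N^{2 c_1 - 1})$ for suitable constants $K, c > 0$, the factor $N$ arising from the $[(1 - \varepsilon) N + 1]$ prefactors. Taking logarithms gives $-\ln P_N \geqslant c N^{2 c_1 - 1} - \ln(K N)$, and dividing by $N^{2 c_1 - 1}$ the $\ln(K N) = O(\ln N)$ term disappears, pinning the decay rate of $P_N$ to the exponent $2 c_1 - 1$.

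The step I expect to carry the real (though modest) weight is the bookkeeping, not a new idea: one must verify that within each $\min(\cdot, \cdot)$ the $N^{2 c_1 - 1}$ argument is genuinely the active one for all large $N$, that the subdominant $\exp(-c' N)$ term from $1 - P\{A\}$ is truly swamped by the $\exp(-c N^{2 c_1 - 1})$ term, and that every polynomial-in-$N$ prefactor contributes only $O(\ln N)$ upon taking logarithms. Finally, to obtain the stated bound $\limsup_{N} (-\ln P_N)/N^{2 c_1 - 1} \leqslant C$ --- which asserts that the rate is no \emph{faster} than $N^{2 c_1 - 1}$ --- one supplements the above upper bound on $P_N$ with a matching lower bound: it suffices to exhibit a single failure event, namely the large deviation in which the partial-sum process over one segment of length $\Theta(N)$ reaches the level $\lambda = M_5 N^{c_1}$, whose probability is at least $\exp(-C N^{2 c_1 - 1})$ by the (sharp) Gaussian lower-tail estimate, since a deviation of size $N^{c_1}$ for a sum of $\Theta(N)$ independent terms costs $\Theta(N^{2 c_1}/N) = \Theta(N^{2 c_1 - 1})$.
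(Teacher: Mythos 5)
Everything up to your last sentence is correct and is precisely the paper's own (implicit) argument: Corollary~\ref{cor:1} is stated without a separate proof because it is meant to be read directly off the two probability bounds in the proof of Theorem~\ref{thm:4}, exactly as you do. Failure is contained in $A^C \cup B^C$; the bound on $1 - P\{A\}$ has exponent of order $N$ (since $M_4 - \frac{4 M_5}{M_2 - 2\varepsilon} N^{c_1 - 1} \to M_4 > 0$); the bound on $1 - P\{B\}$ has exponent of order $\min(N^{2 c_1 - 1}, N^{c_1}) = N^{2 c_1 - 1}$ because $2 c_1 - 1 < c_1$; and since $2 c_1 - 1 < 1$ the latter term dominates, giving $P_N \leqslant K N \exp(-c N^{2 c_1 - 1})$ and hence $\liminf_{N \to \infty} (-\ln P_N)/N^{2 c_1 - 1} \geqslant c > 0$ after absorbing the $O(\ln N)$ contribution of the prefactors. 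That bookkeeping is all the paper intends by the corollary: it is the sense in which $P_N$ is ``dominated by'' $\exp(-N^{2 c_1 - 1})$, with the exponent inherited from the choice of $\lambda_N$.

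Your final step --- the matching lower bound on $P_N$ aimed at the literal reading $\limsup_N (-\ln P_N)/N^{2 c_1 - 1} \leqslant C$ --- is where the proposal breaks down, for two reasons. First, Theorem~\ref{thm:4} assumes only independent sub-exponential continuous noise with uniformly bounded $\psi_1$-norms; sub-exponentiality yields upper tail bounds only, and no Gaussian lower-tail estimate is available. In fact no argument can close this gap under the stated hypotheses: taking $(\ve{\varepsilon}_N)_i \sim \mathcal{N}(0, 1/N)$ satisfies every assumption of the theorem, yet then any failure requires either a noise segment-average of size $\Theta(1)$ (probability $\exp(-\Theta(N^2))$) or a partial-sum excursion of size $\Theta(\lambda_N) = \Theta(N^{c_1})$ (probability $\exp(-\Theta(N^{2 c_1}))$), so $(-\ln P_N)/N^{2 c_1 - 1} \to \infty$ and no constant $C$ works; a lower bound on $P_N$ would need an additional non-degeneracy assumption such as variances bounded away from zero. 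Second, even granting unit-variance Gaussian noise, the event you exhibit is not a failure event: by the optimality conditions \eqref{eq:optcond}, $|z_t| = \lambda$ is necessary but not sufficient for a change point, and the constant level of $m_t$ on a segment (hence the drift of $z_t$ there) is itself a free parameter, so a partial sum merely reaching the level $\lambda$ can be absorbed without creating any spurious change point. Forcing a failure requires something like an oscillation of the noise partial sums, inside a single interval of $(K_N^\varepsilon)^C$, exceeding $2\lambda$ after subtracting the best linear drift --- an event of the same exponential order, but one that needs its own argument. In short: keep your first part (it coincides with the paper's proof), read the displayed inequality as the upper bound on $P_N$ that this part delivers, and drop or substantially rework the last sentence.
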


This corollary implies that the choice of $\lambda$ ultimately determines the rate of convergence of the change points of the FLSA estimate to the true change points of $\ve{m}_N^o$.

\begin{remm}
The results of Sections~\ref{subsec:inconsistency} and \ref{subsec:consistency} basically show that the FLSA fails to detect the change points of the underlying signal only in the presence of ``staircases''. This phenomenon may be detected by looking at the dual variables, as the examples in the Introduction suggest, since a staircase forces the dual variables $z_t$ to remain close to the boundaries $\pm \lambda$. Therefore, by observing $z_t$ it is possible to distinguish in a first stage between legitimate change points and those which may be mere staircase artifacts.
\end{remm}

%
%

\section{Extensions}
\label{sec:3}

\subsection{Mean and variance filtering}
Consider the signal $\{y_t\}$ which satisfies $y_t \sim \mathcal{N}(m_t, \sigma_t^2)$, where both $\{m_t\}$ and $\sigma_t$ are (unknown) piece-wise constant sequences. Assume that the measurements $\ve{Y}_N := [y_1 \; \cdots y_N]^T$ are available, and we are interested in estimating $m_1, \dots, m_N$ and $\sigma_1, \dots, \sigma_N$.

To solve this problem, first notice that the model $y_t \sim \mathcal{N}(m_t, \sigma_t^2)$ is a \emph{standard exponential family with canonical parameters} $\mu_t := m_t / \sigma_t^2$ and $\eta_t := -1 / 2\sigma_t^2$, where $\mu_t \in \mathbb{R}$ and $\eta_t \in \mathbb{R}^{-}$ \citep{Brown-86}[Example~1.2]. This means that the log-likelihood of $\{\mu_1, \dots, \mu_N, \eta_1, \dots, \eta_N\}$  given $\ve{Y}_N$ is
\begin{align*}
&l(\mu_1, \ldots, \mu_N, \eta_1, \ldots, \eta_N) \\%
&\qquad \qquad = \ln \left\{ \frac{1}{(2\pi)^{N/2} \prod\nolimits_{t = 1}^N \sigma_t }\exp \left(-\sum\limits_{t = 1}^N \frac{(y_t - m_t)^2}{2 \sigma_t^2} \right) \right\} \\
&\qquad \qquad = - \frac{N}{2} \ln(2 \pi) - \sum\limits_{t = 1}^N \ln(\sigma_t) - \sum\limits_{t = 1}^N \frac{(y_t - m_t)^2}{2\sigma_t^2} \\
&\qquad \qquad = - \frac{N}{2} \ln \pi + \frac{1}{2} \sum\limits_{t = 1}^N \ln(-\eta_t) + \sum\limits_{t = 1}^N \left[ \eta_t y_t^2 + \mu_t y_t + \frac{\mu_t^2}{4\eta_t} \right] \\
&\qquad \qquad = - \frac{N}{2} \ln \pi + \frac{1}{2} \sum\limits_{t = 1}^N \ln (-\eta_t)  + \sum\limits_{t = 1}^N \frac{\mu_t^2}{4 \eta_t} + \sum\limits_{t = 1}^N (\eta _t y_t^2 + \mu_t y_t).
\end{align*}
Moreover, by \citep{Brown-86}[Theorem~1.13] it follows that $l$ is strictly concave on $\{(\mu_1, \ldots,$ $\mu_N, \eta_1, \ldots, \eta_N):\mu_t \in \mathbb{R}, \eta_t \in \mathbb{R}^-, t = 1, \ldots, N\} $. In order to impose the prior knowledge on the piece-wise constant character of $\{m_t\}$ and $\{\sigma_t\}$, we propose (inspired by \citep{Kim-Koh-Boyd-Gorinevsky-09}) an estimator based on the solution of the following optimization problem:
\begin{align} \label{eq:1}
\begin{array}{cl}
\min\limits_{\scriptstyle \mu_1, \ldots, \mu_N, \eta_1, \ldots , \eta_N} & - \displaystyle \sum\limits_{t = 1}^N \left( \frac{1}{2} \ln(-\eta_t) + \frac{\mu_t^2}{4 \eta_t} +\eta_t y_t^2 + \mu_t y_t \right) \\ &\qquad\qquad
+ \displaystyle \sum\limits_{t = 2}^N \left( \lambda_1 |\mu _t - \mu_{t - 1}| + \lambda_2 |\eta_t - \eta_{t - 1}| \right) \\
\text{s.t.} & \eta_t < 0, \quad t = 1, \ldots, N.
\end{array}
\end{align}

Let us consider now the variance-only case, \ie where $m_1 = \dots = m_N = 0$. Under this assumption, \eqref{eq:1} can be written as
\begin{align} \label{eq:2}
\begin{array}{cl}
\min\limits_{\eta_1, \ldots , \eta_N} &  - \frac{1}{2} \sum\limits_{t = 1}^N \ln(-\eta_t) - \sum\limits_{t = 1}^N \eta_t y_t^2 + \lambda \sum\limits_{t = 2}^N \left| \eta_t - \eta_{t - 1} \right| \\
\text{s.t.} & \eta_t < 0, \quad t = 1, \ldots, N,
\end{array}
\end{align}
where we have dropped the subscript of $\lambda_2$ to simplify the notation.


%
%
%

The KKT conditions of the optimal solution of \eqref{eq:2} are given next.

\begin{lem}[KKT conditions] \label{lem:1}
The KKT conditions of \eqref{eq:2} are
\begin{align} \label{eq:4}
\ve{y} - \ve{\sigma} = \ve{A} \tilde{\ve{z}},\quad \ve{\sigma} > 0
\end{align}
where $\ve{y} := [y_1^2 \; \cdots \; y_N^2]^T$, $\ve{\sigma} := [-1/2 \eta_1 \; \cdots \; -1 / 2 \eta_N]^T = [\sigma_1^2 \; \cdots \; \sigma_N^2]^T$, and
\begin{align} \label{eq:5}
&\ve{A} := \left[ \begin{array}{cccc}
1  &            &           & 0  \\
-1 &1          &           &     \\
    & \ddots & \ddots &    \\
    &            & -1       & 1  \\
0  &            &           & -1
\end{array} \right], \quad
\tilde{\ve{z}} := \left[ \begin{array}{c}
\tilde{z}_2 \\
\vdots \\
\tilde{z}_N
\end{array} \right], \\
&\tilde{z}_t \left\{ \begin{array}{ll}
= \lambda \sgn (\eta_t - \eta_{t - 1}), & \text{if } \eta_t \ne \eta_{t - 1} \\
 \, \in \lambda [-1, 1], & \text{otherwise.}
\end{array} \right. \nonumber
\end{align}
\end{lem}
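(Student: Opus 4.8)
The plan is to mirror the Lagrangian derivation of the mean-filtering optimality conditions in Section~\ref{subsec:optim_cond}; essentially the only new ingredient is the gradient of the log-likelihood term. First I would record that \eqref{eq:2} is a convex program: $\eta_t\mapsto-\tfrac12\ln(-\eta_t)$ is convex on $\{\eta_t<0\}$, the term $-\eta_t y_t^2$ is linear, and the total-variation penalty is convex; strict convexity of the smooth part is inherited from the strict concavity of the log-likelihood already noted via \citep{Brown-86}[Theorem~1.13]. The key preliminary observation is that the constraints $\eta_t<0$ are \emph{inactive} at the optimum: since $-\tfrac12\ln(-\eta_t)\to+\infty$ as $\eta_t\uparrow0$, the logarithmic barrier forces the minimiser into the interior of the feasible set. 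Consequently no multipliers are needed for these inequalities, the positivity $\ve\sigma>0$ in \eqref{eq:4} holds automatically through $\sigma_t^2=-1/(2\eta_t)>0$, and the KKT system collapses to a single stationarity (subgradient) condition.

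Next I would split the penalty by introducing $w_t=\eta_t-\eta_{t-1}$ for $t=2,\dots,N$, with multipliers $\tilde z_t$, and form the Lagrangian
\begin{align*}
\tilde{\L}(\ve\eta,\ve w,\tilde{\ve z})
=-\tfrac12\sum_{t=1}^N\ln(-\eta_t)-\sum_{t=1}^N\eta_t y_t^2
+\lambda\sum_{t=2}^N|w_t|+\sum_{t=2}^N\tilde z_t\,(\eta_t-\eta_{t-1}-w_t),
\end{align*}
in exact analogy with \eqref{eq:lagr}. The one computation to perform is the gradient of the smooth part, for which $\tfrac{d}{d\eta_t}\bigl[-\tfrac12\ln(-\eta_t)-\eta_t y_t^2\bigr]=-\tfrac1{2\eta_t}-y_t^2=\sigma_t^2-y_t^2$; thus the $\eta_t$-stationarity equations of $\tilde{\L}$ have precisely the same shape as \eqref{eq:optm1}, with $m_t$ replaced by $\sigma_t^2$ and $y_t$ by $y_t^2$. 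Stacking the boundary equation for $t=1$, the interior equations for $2\leqslant t\leqslant N-1$, and the boundary equation for $t=N$, and then substituting the definitions of $\ve A$, $\ve y$ and $\ve\sigma$ from \eqref{eq:4}--\eqref{eq:5}, yields the matrix identity \eqref{eq:4}.

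Finally I would characterise the multipliers through the subgradient of $\tilde{\L}$ in $w_t$, namely $0\in\lambda\,\Sgn(w_t)-\tilde z_t$, which together with the constraint $w_t=\eta_t-\eta_{t-1}$ gives $\tilde z_t=\lambda\sgn(\eta_t-\eta_{t-1})$ when $\eta_t\neq\eta_{t-1}$ and $\tilde z_t\in\lambda[-1,1]$ otherwise, i.e.\ the second part of \eqref{eq:5}. Since the objective is strictly convex on the open feasible region, these stationarity conditions are simultaneously necessary and sufficient, so the displayed system characterises the unique optimiser. I expect the main obstacle to be organisational rather than conceptual: one must (i) justify cleanly that the barrier keeps every $\eta_t$ strictly negative, so that subdifferential calculus applies with no extra multipliers for the inequalities, and (ii) match the signs of the two nonzero entries in each row of the bidiagonal $\ve A$ against the multipliers $\tilde z_t,\tilde z_{t+1}$ entering consecutive stationarity equations when collapsing the scalar conditions into \eqref{eq:4}.
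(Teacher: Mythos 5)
Your proof is correct in substance but follows a genuinely different route from the paper's. The paper does not reuse the nonsmooth Lagrangian of Section~\ref{subsec:optim_cond}: it first rewrites \eqref{eq:2} in epigraph form, introducing slack variables $\tau_t \geqslant |\eta_t - \eta_{t-1}|$ and two families of nonnegative multipliers $\mu_t^1,\mu_t^2$ for the linear inequalities $\pm(\eta_t-\eta_{t-1})\leqslant\tau_t$, writes the fully smooth KKT system with complementary slackness, eliminates $\mu_t^2=\lambda-\mu_t^1$, and finally re-centers via $\tilde{\mu}_t^1=2\mu_t^1-\lambda$; the case analysis on complementary slackness is what produces $\tilde{\mu}_t^1=\lambda\sgn(\eta_t-\eta_{t-1})$ there. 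Your route --- splitting with $w_t=\eta_t-\eta_{t-1}$, using $\partial|w_t|=\Sgn(w_t)$ directly, and computing $\tfrac{d}{d\eta_t}\bigl[-\tfrac12\ln(-\eta_t)-\eta_t y_t^2\bigr]=\sigma_t^2-y_t^2$ --- reaches the same system in one step and makes the analogy with \eqref{eq:optm1} transparent; what the paper's longer route buys is that it never invokes subdifferential calculus, only classical smooth KKT theory with complementary slackness. Both arguments treat $\eta_t<0$ identically (the log-barrier keeps the optimum in the interior, so these constraints carry no multipliers). One caveat you should flag rather than paper over: carried out carefully, your stationarity equations (and, for that matter, the paper's own final display in its proof) stack into $\ve{\sigma}-\ve{y}=\ve{A}\tilde{\ve{z}}$ with $\tilde{z}_t=\lambda\sgn(\eta_t-\eta_{t-1})$, which is the \emph{opposite} sign to \eqref{eq:4} as printed; the lemma statement contains a sign typo (either the left-hand side of \eqref{eq:4} or the sign convention in \eqref{eq:5} must be flipped), so your assertion that the stacked equations ``yield the matrix identity \eqref{eq:4}'' inherits this inconsistency instead of resolving it. Since $\eta\mapsto\sigma^2=-1/(2\eta)$ is increasing, $\sgn(\eta_t-\eta_{t-1})=\sgn(\sigma_t^2-\sigma_{t-1}^2)$, and the corrected system is exactly the FLSA optimality system \eqref{eq:8}--\eqref{eq:9} with $m_t\to\sigma_t^2$ and $y_t\to y_t^2$, which is precisely what Lemma~\ref{lem:2} needs.
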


An important observation from Lemma~\ref{lem:1} is that the KKT conditions for the solution of problem \eqref{eq:2} coincide with those of the so-called fused lasso (or, more precisely, the FLSA) \citep{Friedman-Hastie-Hoffling-Tibshirani-07}. This is formally established in the following lemma, originally proved in \cite{BW_Asilomar}.

\begin{lem}[Relation to fused lasso] \label{lem:2}
The solution of problem \eqref{eq:2} coincides with the FLSA, given by the solution of the optimization problem:
\begin{align} \label{eq:6}
\begin{array}{cl}
\min\limits_{\sigma_1^2, \ldots, \sigma_N^2} & \displaystyle \frac{1}{2} \displaystyle \sum\limits_{t = 1}^N [y_t^2 - \sigma_t^2]^2 + \lambda \sum\limits_{t = 2}^N \left| \sigma_t^2 - \sigma_{t - 1}^2 \right|.
\end{array}
\end{align}
\end{lem}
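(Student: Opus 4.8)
The plan is to exploit Lemma~\ref{lem:1}, which already expresses the KKT conditions of problem~\eqref{eq:2} in a form mirroring the FLSA optimality conditions~\eqref{eq:8}--\eqref{eq:9}, and to show that these conditions become \emph{identical} to those of~\eqref{eq:6} once the reparametrization $\sigma_t^2 = -1/(2\eta_t)$ is accounted for. First I would observe that the map $\eta_t \mapsto \sigma_t^2 = -1/(2\eta_t)$ is a strictly increasing bijection from $\mathbb{R}^-$ onto $\mathbb{R}^+$, so it carries the feasible set of~\eqref{eq:2} onto that of~\eqref{eq:6}, and in particular $\sgn(\sigma_t^2 - \sigma_{t-1}^2) = \sgn(\eta_t - \eta_{t-1})$, with $\sigma_t^2 = \sigma_{t-1}^2 \iff \eta_t = \eta_{t-1}$. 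Next I would note that both~\eqref{eq:2} and~\eqref{eq:6} are strictly convex — \eqref{eq:2} because its smooth part is separable with each term having positive second derivative $1/(2\eta_t^2)$, and~\eqref{eq:6} because its quadratic loss $\frac{1}{2}\|\ve{y} - \ve{\sigma}\|_2^2$ is strictly convex — so each has a \emph{unique} global minimizer, for which the KKT conditions are both necessary and sufficient. It therefore suffices to prove that the two KKT systems coincide.

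The crux is a direct comparison of the two stationarity conditions. Differentiating the smooth part of~\eqref{eq:2} with respect to $\eta_t$ gives $-1/(2\eta_t) - y_t^2 = \sigma_t^2 - y_t^2$, which is \emph{exactly} the gradient of the quadratic loss in~\eqref{eq:6} with respect to $\sigma_t^2$. Consequently the stationarity condition of~\eqref{eq:2} reads

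\begin{align*}
\sigma_t^2 - y_t^2 + \lambda\,\Sgn(\eta_t - \eta_{t-1}) - \lambda\,\Sgn(\eta_{t+1} - \eta_t) \ni 0
\end{align*}

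(with the usual boundary modifications at $t = 1, N$), while that of~\eqref{eq:6} is the same condition with $\eta$ replaced by $\sigma^2$ inside the $\Sgn$ terms. By the monotonicity established above, $\Sgn(\eta_t - \eta_{t-1}) = \Sgn(\sigma_t^2 - \sigma_{t-1}^2)$ as (possibly set-valued) subdifferentials — including the tie case, where both reduce to $[-1,1]$ — so the two inclusions are literally the same. Reassembling these into matrix form recovers precisely~\eqref{eq:4}--\eqref{eq:5}, now recognized as the FLSA optimality conditions~\eqref{eq:8}--\eqref{eq:9} for the data vector $[y_1^2 \; \cdots \; y_N^2]^T$ and signal $[\sigma_1^2 \; \cdots \; \sigma_N^2]^T$.

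Finally I would conclude as follows: let $\ve{\eta}^\star$ be the unique solution of~\eqref{eq:2} and set $(\sigma_t^\star)^2 := -1/(2\eta_t^\star)$. By Lemma~\ref{lem:1} the pair $(\ve{\eta}^\star, \ve{\sigma}^\star)$ satisfies~\eqref{eq:4}--\eqref{eq:5}; by the equivalence of the stationarity conditions just shown, $\ve{\sigma}^\star$ satisfies the KKT conditions of~\eqref{eq:6}, and since~\eqref{eq:6} is strictly convex these are sufficient for global optimality, so $\ve{\sigma}^\star$ is its unique minimizer. Hence the two solutions coincide. I expect the main obstacle to be conceptual rather than computational: the two objectives are genuinely \emph{different} functions (substituting $\eta_t = -1/(2\sigma_t^2)$ into~\eqref{eq:2} does \emph{not} reproduce~\eqref{eq:6}), so the equivalence cannot come from a cost-preserving change of variables and must be argued entirely at the level of first-order conditions. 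The delicate points are the exact cancellation that makes the log-likelihood gradient equal $\sigma_t^2 - y_t^2$, and the careful treatment of the set-valued subdifferential at ties, where it is precisely the monotone reparametrization that guarantees the two subdifferentials agree.
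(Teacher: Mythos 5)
Your proof is correct, and it rests on the same backbone as the paper's — matching the KKT system of \eqref{eq:2} from Lemma~\ref{lem:1} with the FLSA optimality conditions via the monotone reparametrization $\sigma_t^2 = -1/(2\eta_t)$ — but it runs the argument in the opposite direction, and this changes what has to be proved. The paper compares Lemma~\ref{lem:1} with the FLSA conditions (citing Rinaldo's equation (2.1) and Lemma A.1) and argues from \eqref{eq:6} back to \eqref{eq:2}; since \eqref{eq:2} carries the implicit constraint $\ve{\sigma} > 0$ while \eqref{eq:6} is unconstrained, the paper must then separately prove that the FLSA minimizer is componentwise positive, which it does by a truncation argument (replacing $\sigma_t$ by $\max\{\sigma_t,0\}$ strictly decreases the cost, a contradiction). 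You argue from \eqref{eq:2} to \eqref{eq:6}: the image of the solution of \eqref{eq:2} under the strictly increasing bijection is a point at which zero lies in the subdifferential of the convex objective \eqref{eq:6}, hence by sufficiency of stationarity and strict convexity it is \emph{the} FLSA solution; positivity of the FLSA solution then comes out as a corollary rather than needing a separate lemma, and your identification of the two subgradient systems (same dual multipliers, ties handled by the set-valued $\Sgn$) is exactly the content of \eqref{eq:4}--\eqref{eq:5} versus \eqref{eq:8}--\eqref{eq:9}. The trade-off is the one loose end in your write-up: you invoke strict convexity of \eqref{eq:2} to get a ``unique global minimizer,'' but strict convexity gives uniqueness, not existence — the domain $\{\eta_t < 0\}$ is open, and if some $y_t = 0$ the objective is actually unbounded below. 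You should add one sentence noting that when all $y_t \neq 0$ (which holds almost surely for continuous noise) each term $-\tfrac{1}{2}\ln(-\eta_t) - \eta_t y_t^2$ tends to $+\infty$ at both ends of $\mathbb{R}^-$, so the objective is coercive on its domain and a minimizer exists. The paper's direction gets existence of a solution to \eqref{eq:2} for free from the (obviously existing) FLSA solution plus positivity; yours gets positivity for free but must pay for existence.
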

The conclusion is that the theory on FLSA directly applies to the corresponding variance segmentation and estimation problem. The corresponding multivariate covariance matrix problems is more difficult to analyze and is outside the scope of the paper.
From a practical point of view it may not be good to square the measurements $y_t^2$, since it amplifies noise and outliers. A solution is to use the more robust Huber penalty function instead of the least squares cost, while still leading to a convex optimization problem.

\subsection{Trend Filtering}
Another question is how to apply the theory of Section  \ref{sec:2} to the trend filtering problem discussed in \cite{Kim-Koh-Boyd-Gorinevsky-09}. Here the mean values should be piece-wise linear. An simple approach would be to apply the FLSA to
$$
\Delta y(t)=y(t)-y(t-1)
$$
which would be piece-wise constant. Hence we would expect that the staircase issue would arise if the slopes of consecutive linear segments are increasing (or decreasing). However, the $\ell_1$ filtering algorithm is a bit more involved: 
\begin{align}
\begin{array}{cl}
\min\limits_{\{m_t\}_{t=1}^N,\{w_t\}_{t=2}^N} &\; \displaystyle \frac{1}{2}\sum_{t=1}^N [y_t-m_t]^2+\lambda \sum_{t=2}^{N-1} |w_t|\\
\text{s.t.} &\;  w_t=m_{t+1}-2m_t+m_{t-1}, \quad t = 2, \dots, N-1.
\end{array}
\label{eq:trend}
\end{align}

 The KKT optimality conditions can be derived as follows.
Let
\begin{align*}
{\L}(\ve{m},\ve{w},\ve{z})=\frac{1}{2}\sum_{t=1}^N [y_t-m_t]^2+\lambda \sum_{t=2}^{N-1} |w_t|+ \sum_{t=2}^{N-1}z_t(
m_{t+1}-2m_{t}+m_{t-1}-w_{t})
\end{align*}
with respect to $m_t$ to obtain
\begin{align} 
-(y_1-m_1)+z_2 &= 0, \nonumber\\
-(y_2-m_2)-2z_2+z_3 &= 0, \nonumber\\
-(y_t-m_t)+z_{t+1}-2z_{t}+z(t-1) &= 0, \quad t=3,\ldots, N-2,\nonumber\\
-(y_{N-1}-m_{N-1})+z_{N-1}-2z_{N-2} &= 0,  \nonumber\\
-(y_N-m_N)+z_N &= 0. \nonumber
\end{align}
The solution of these equations is just the double sum
$$
z_t=\sum^{t-2}_{j=3}\sum_{i=1}^{j-1} [m_i-y_i]
$$
with proper initial and end constraints given by \eqref{eq:optm1}. The sub-gradient with respect to $w_t$ is exactly the same as for FLSA.
Hence we have an integrated random walk with endpoints at
$
z_t=\lambda \Sgn(m_{t+1}-2m_{t-1}+m_{t-1}),\quad t=2, \ldots, N-1.
$
To conclude:
\begin{align} \label{eq:optcondtrend}
& |  z_{t}|\leqslant \lambda,\; t=2,\ldots , N-1,\nonumber\\
& |  z_{t}|<\lambda\;\mbox{(constant)}\quad \Rightarrow\: m_{t+1}-2m_t+m_{t-1}=0 \\
&| z_{t_k}|=\lambda \;\mbox{(transition)} \quad \Rightarrow\;
 \sgn(m_{t_{k}+1}-2m_{t_{k}}+m_{t_{k}-1}) = \sgn(z_{t_k}). \nonumber
\end{align}
The properties of the corresponding estimates can in principle be analyzed from the bias term of $z_t$. This will, however, be the topic of future research.

\section{Summary} \label{sec:4}

In this paper, the change point detection properties of the fused lasso have been studied.
In contrast to previous results in the literature, which establish the impossibility of the fused lasso to exactly determine the true change points of a piece-wise signal, our analysis has focused on the \emph{approximate} detection of such change points, by defining the concept of $\varepsilon$-sign consistency. As a result, we have shown that the $l_1$ regularization trick of the fused lasso works or fails in detecting the true change points under well defined conditions, based on the intuition obtained from the Lagrangian dual of the FLSA. It is important to notice, however, that the FLSA is $\ell_2$ consistent under milder conditions (given a suitable choice of its regularization parameter).

\appendix

\section{Proofs} \label{app:A}

\subsection{Proof of Lemma~\ref{lem:3}}
The first two equations in \eqref{eq:10} can be obtained by adding the first $k$ ($1 \leqslant k \leqslant N - 1$) components of \eqref{eq:4}. By adding all the components of \eqref{eq:8} we arrive at the third equation of \eqref{eq:10}. The converse can be established by subtracting consecutive components of \eqref{eq:10}. This concludes the proof.	\qed

\subsection{Proof of Lemma~\ref{lem:4}}
From the knowledge of the $t_k$'s and $s_{t_k}$'s (taking $s_{t_M} = s_N := 0$), the cost of problem \eqref{eq:7} can be written as
\begin{align*}
f = \sum\limits_{k = 1}^M \left[ \frac{1}{2} \sum\limits_{t = t_{k - 1}}^{t_k - 1} y_t^2 - x_{t_{k - 1}} \sum\limits_{t = t_{k - 1}}^{t_k - 1} y_t  + \frac{t_k - t_{k - 1}}{2} x_{t_{k - 1}}^2 + \lambda s_k (x_{t_k} - x_{t_{k - 1}}) \right].
\end{align*}
Therefore, by differentiating $f$ with respect to the $x_{t_k}$'s and setting the derivatives to zero, we obtain
\begin{align*}
(t_1 - 1) x_1 - \sum\limits_{t = 1}^{t_1 - 1} y_t - \lambda s_1 &= 0 \\
(t_{k + 1} - t_k) x_{t_k} - \sum\limits_{t = t_k}^{t_{k + 1} - 1} y_t - \lambda s_{k + 1} + \lambda s_k &= 0, \quad k = 1, \ldots, M - 1,
\end{align*}
or
\begin{align*}
x_1 &= \frac{1}{t_1 - 1} \sum\limits_{t = 1}^{t_1 - 1} y_t + \frac{1}{t_1 - 1} \lambda s_1 \\
x_{t_k} &= \frac{1}{t_{k + 1} - t_k} \sum\limits_{t = t_k}^{t_{k + 1} - 1} y_t + \frac{1}{t_{k + 1} - t_k} \lambda (s_{k + 1} - s_k), \quad k = 1, \ldots, M - 1.
\end{align*}
This concludes the proof.	\qed

\subsection{Proof of Lemma~\ref{lem:6}}
By performing the change of variables suggested in the statement of the lemma, \eqref{eq:11} can be put in the form
\begin{align} \label{eq:37}
\min\limits_{x_1, \ve{x'}} \; \frac{1}{2} \left\| \ve{y} - x_1 \ve{1}_N - \left[ \begin{array}{c}
0_{1,N - 1} \\
\ve{A}_{N - 1}
\end{array} \right] \ve{x'} \right\|_2^2 + \lambda \| \ve{x'} \|_1.
\end{align}
Since the cost function in \eqref{eq:37} is quadratic in $x_1$, it can be simplified by explicitly minimizing this cost with respect to $x_1$. To this end, notice that
\begin{align*}
&\left\| \ve{y} - x_1 \ve{1}_N - \left[ \begin{array}{c}
0_{1,N - 1}      \\
\ve{A}_{N - 1}
\end{array} \right] \ve{x'} \right\|_2^2 \\
&= \left( \ve{y} - x_1 \ve{1}_N - \left[ \begin{array}{c}
0_{1,N - 1}      \\
\ve{A}_{N - 1}
\end{array} \right] \ve{x'} \right)^T \left( \ve{y} - x_1 \ve{1}_N - \left[ \begin{array}{c}
0_{1,N - 1}      \\
\ve{A}_{N - 1}
\end{array} \right] \ve{x'} \right) \\
&= \left( \ve{y} - \left[ \begin{array}{c}
0_{1,N - 1} \\
\ve{A}_{N - 1}
\end{array} \right] \ve{x'} \right)^T \left[ \ve{I} - \frac{1}{N} \ve{1}_{N,N} \right] \left( \ve{y} - \left[ \begin{array}{c}
0_{1,N - 1}      \\
\ve{A}_{N - 1}
\end{array} \right] \ve{x'} \right) \\
&\qquad + N \left( x_1 - \frac{1}{N} \ve{1}_{1,N} \left[ \ve{y} - \left[ \begin{array}{c}
0_{1,N - 1} \\
\ve{A}_{N - 1}
\end{array} \right] \ve{x'} \right] \right)^2,
\end{align*}
which shows that \eqref{eq:37} can be replaced by
\begin{align} \label{eq:38}
\min\limits_{\ve{x'}} \; \frac{1}{2} \left( \ve{y} - \left[ \begin{array}{c}
0_{1,N - 1} \\
\ve{A}_{N - 1}
\end{array} \right] \ve{x'} \right)^T \left[ \ve{I} - \frac{1}{N} \ve{1}_{N,N} \right] \left( \ve{y} - \left[ \begin{array}{c}
0_{1,N - 1}      \\
\ve{A}_{N - 1}
\end{array} \right] \ve{x'} \right) + \lambda \|\ve{x'}\|_1,
\end{align}
and
\begin{align*}
x_1 = \frac{1}{N} \ve{1}_{1,N} \left[ \ve{y} - \left[ \begin{array}{c}
0_{1,N - 1}      \\
\ve{A}_{N - 1}
\end{array} \right] \ve{x'} \right] = \frac{1}{N} \sum\limits_{t = 1}^N y_t  - \frac{1}{N} \sum\limits_{t = 1}^{N - 1} \sum\limits_{k = 1}^t x'_k.
\end{align*}
Furthermore, since $\ve{I} - N^{ - 1} \ve{1}_{N,N}$ is idempotent, and
\begin{multline*}
\left[ \ve{I} - \frac{1}{N} \ve{1}_{N,N} \right] \left( \ve{y} - \left[ \begin{array}{c}
0_{1,N - 1}      \\
\ve{A}_{N - 1}
\end{array} \right] \ve{x'} \right) \\
= \ve{y} - \left( N^{-1} \sum\nolimits_{t = 1}^N y_t \right) \ve{1}_{N,1} - \left[ \ve{I} - \frac{1}{N} \ve{1}_{N,N} \right] \left[ \begin{array}{c}
0_{1,N - 1}      \\
\ve{A}_{N - 1}
\end{array} \right] \ve{x'} = \ve{\tilde{y}} - \ve{\tilde{A} x'},
\end{multline*}
where we have used the notation in the statement of the theorem, we have that \eqref{eq:38} is equal to \eqref{eq:12}. This concludes the proof. \qed

\subsection{Proof of Lemma~\ref{lem:7}}
First notice that $\ve{C}$ is symmetric by construction, which establishes the first equality in \eqref{eq:13}. Now, let $i \leqslant k$. Then, by Lemma~\eqref{eq:6}, we have that
\begin{align*}
C_{ik} %
&= \sum\limits_{l = 1}^N A_{li} A_{lk} \\
&= \sum\limits_{l = 1}^i \frac{i - N}{N} \frac{k - N}{N} + \sum\limits_{l = i + 1}^k \frac{i}{N} \frac{k - N}{N} + \sum\limits_{l = k + 1}^N \frac{i}{N}\frac{k}{N} \\
&= \frac{i (i - N) (k - N) + (k - i) i (k - N) + (N - k) i k}{N^2} \\
&= \frac{i (N - k)}{N}.
\end{align*}
This proves the Lemma. \qed

\subsection{Proof of Lemma~\ref{lem:8}}
To simplify the proof, let us extend $\ve{X}$ to $\ve{\tilde{X}} := \ve{C}_{:,K} \ve{C}_{K,K}^{-1} \in \mathbb{R}^{n \times |K|}$ (where we have use Matlab$^\circledR$'s notation). This is equivalent to stating that $\ve{C}_{K,K} \ve{\tilde{X}}^T = \ve{C}_{K,:}$ (due to the symmetry of $\ve{C}$). We will show that for every $i \in \{1, \ldots, n\}$, $k \in \{1, \ldots, |K|\}$,
\begin{align*}
\ve{\tilde{X}}_{i,k} = \left\{ \begin{array}{ll}
0, & i \leqslant K(k - 1) \\
\displaystyle \frac{i - K(k - 1)}{K(k) - K(k - 1)},   & K(k - 1) \leqslant i \leqslant K(k) \\
\displaystyle \frac{K(k + 1) - i}{K(k + 1) - K(k)}, & K(k) < i \leqslant K(k + 1) \\
0,                                                  & i > K(k + 1).
\end{array} \right.
\end{align*}
To this end, first notice that if $i = K(\tilde{k})$ for some $\tilde{k} \in \{1, \ldots, |K|\}$, then the $i$-th row of $\ve{X}$ equals $\ve{e}_{\tilde{k}}^T$ (the $\tilde{k}$-th unit row vector in $\mathbb{R}^{|K|}$). This is so because $\ve{C}_{K,K} \ve{e}_{\tilde{k}} = \ve{C}_{K,\tilde{k}}$, which corresponds to the $i$-th column of the equation $\ve{C}_{K,K} \ve{\tilde{X}}^T = \ve{C}_{K,:}$. To conclude the proof, it is enough to show that if $i$ lies between, say, $K(\tilde{k})$ and $K(\tilde{k} + 1)$ for some $\tilde{k} \in \{0, \ldots, |K|\}$, the $i$-th row of $\ve{X}$ is a linear interpolation of the rows $\ve{X}_{K(\tilde{k}), :}$ and $\ve{X}_{K(\tilde{k} + 1), :}$, or, equivalently, that $\ve{X}_{i,:}$ depends affinely on $i$ between $K(\tilde{k})$ and $K(\tilde{k} + 1)$. This follows directly from the equation $\ve{C}_{K,K} (\ve{\tilde{X}}_{i,:})^T = \ve{C}_{K,i}$, since for $K(\tilde{k}) \leqslant i \leqslant K(\tilde{k} + 1)$ we have, by Lemma~\ref{lem:1},
\begin{align*}
\ve{C}_{k,i} = \left\{ \begin{array}{ll}
\vspace{2mm} \displaystyle \ve{C}_{k, \tilde{k}} + \frac{k}{N} [K(\tilde{k}) - i], & k \leqslant \tilde{k} \\
\displaystyle \ve{C}_{k, \tilde{k}} + \frac{(N - k)}{N} [i - K(\tilde{k})], & k > \tilde{k}.
\end{array} \right.
\end{align*}
Therefore, since the right hand side of $\ve{C}_{K,K} (\ve{\tilde{X}}_{i,:})^T = \ve{C}_{K,i}$ depends affinely on $i$ for $K(\tilde{k}) \leqslant i \leqslant K(\tilde{k} + 1)$, so does $(\ve{\tilde{X}}_{i,:})^T$. This concludes the proof.	\qed

\subsection{Proof of Lemma~\ref{lem:10}}
Let us assume that condition~\eqref{eq:15} does not hold for a particular $K$, and pick $\ve{x}_0 \in \mathbb{R}^n$ as in the statement of the theorem, \ie $\sgn(\ve{x}_0)_K = \ve{s}$ and $(\ve{x}_0)_{\{1, \ldots, n\} \backslash K} = 0$. Consider the subdifferential of the cost function of \eqref{eq:14}:
\begin{align*}
\partial \left[ \frac{1}{2}\| \ve{y} - \ve{Ax} \|_2^2 + \lambda \| \ve{x} \|_1 \right] = \ve{A}^T (\ve{y} - \ve{Ax}) + \lambda \Sgn(\ve{x}),
\end{align*}
where ``$\Sgn$'' is a set-valued version of $\sgn$, applied component-wisely: $\Sgn(x) = \{1\}$ if $x > 0$, $\Sgn(x) = \{-1\}$ if $x < 0$  and $\Sgn(0) = [-1, 1]$. Now, $\ve{x}$ is an optimal solution of \eqref{eq:14} iff
\begin{align*}
\ve{0} \in \ve{A}^T (\ve{y} - \ve{Ax}) + \lambda \Sgn(\ve{x}),
\end{align*}
\ie
\begin{align*}
\ve{C}(\ve{x}_0 - \ve{x}) + \ve{A}^T \ve{\varepsilon} \in -\lambda \Sgn(\ve{x}).
\end{align*}
Let us assume that $\sgn(\ve{x}) = \sgn(\ve{x}_0)$. This implies that
\begin{align*}
\ve{C}_{K^C,K}(\ve{x}_0 - \ve{x})_K + (\ve{A}^T \ve{\varepsilon})_{K^C} &= -\lambda \ve{w} \\
\ve{C}_{K,K}(\ve{x}_0 - \ve{x})_K + (\ve{A}^T \ve{\varepsilon})_K &= -\lambda \ve{s},
\end{align*}
where $\ve{w} \in [-1, 1]^{n - |K|}$ is arbitrary. Combining these equations we obtain
\begin{align} \label{eq:40}
\ve{C}_{K^C,K} \ve{C}_{K,K}^{-1} \ve{s} = \frac{1}{\lambda} \left[ (\ve{A}^T \ve{\varepsilon})_{K^C} - \ve{C}_{K^C,K} \ve{C}_{K,K}^{-1}(\ve{A}^T \ve{\varepsilon})_K \right] + \ve{w}.
\end{align}
Let $i \in K^C$ be such that $|(\ve{C}_{K^C,K} \ve{C}_{K,K}^{-1} \ve{s})_i| \geqslant 1$. Since $\delta := \min_{\ve{\alpha} \in \mathbb{R}^N \backslash \{0\}}$ $\min \left\{ P[\ve{\alpha}^T \ve{\varepsilon} > 0], P[\ve{\alpha}^T \ve{\varepsilon} < 0] \right\} > 0$, with probability at least $\delta$ we have that
\begin{align*}
\sgn \left[ (\ve{A}^T \ve{\varepsilon})_{K^C} - \ve{C}_{K^C,K} \ve{C}_{K,K}^{-1}(\ve{A}^T\ve{\varepsilon})_K \right]_i = - \sgn(\ve{C}_{K^C,K} \ve{C}_{K,K}^{-1} \ve{s})_i.
\end{align*}
Under this event, condition~\eqref{eq:40} does not hold, which contradicts the assumption that $\sgn(\ve{x}) = \sgn(\ve{x}_0)$ (since $|w_i| \leqslant 1$). This concludes the proof. \qed

\subsection{Proof of Lemma~\ref{lem:inconsistency2}}
We will establish this result by embedding the random walk $s_t$ into a Brownian motion process. To this end, first notice that this probability is independent of the values of $\mu$ and $\sigma^2$, so we may suppose without loss of generality that $\mu = 0$ and $\sigma^2 = 1/N$. Now, let $W$ be a standard Brownian motion process~\citep{Karlin-66}. Since $\{s_1, \dots, s_N\}$ has the same joint distribution as $\{W(1/N), \ldots, W(1)\}$, it follows that
\begin{align*}
Q %
&= P\left\{W\left( \frac{t}{N} \right) - \frac{t}{N} W(1) \geqslant 0 \;\forall t \in \{1, \dots, \lfloor \varepsilon N \rfloor\} \cup \{N - \lfloor \varepsilon N \rfloor + 1, \ldots, N\}\right\} \\
&\geqslant P\{W(t) - t W(1) \geqslant 0\;\forall t \in [1/N,\lfloor \varepsilon N \rfloor/N] \cup [1 - \lfloor \varepsilon N \rfloor/N + 1/N, 1]\}.
\end{align*}
The process $B(t) := W(t) - tW(1)$, $t \in [0,1]$, is a Brownian bridge \citep{Karlin-Taylor-81}[eq.~(9.31)], and an alternative representation for such a process is $B(t) = (1 - t) W(t / (1 - t))$ \citep{Karlin-Taylor-81}[eq.~(9.29)]. This gives
\begin{align*}
Q %
&\geqslant P\left\{ (1 - t)W\left( \frac{t}{1 - t} \right) \geqslant 0 \; \forall t \in \left[\frac{1}{N}, \frac{\lfloor \varepsilon N \rfloor}{N}\right] \cup \left[1 - \frac{\lfloor \varepsilon N \rfloor}{N} + \frac{1}{N}, 1 \right] \right\} \\
&= P\left\{ W(t) \geqslant 0 \; \forall t \in \left[ \frac{1}{N - 1},\frac{\varepsilon'}{1 - \varepsilon'} \right] \cup \left[ \frac{1 - \varepsilon'}{\varepsilon'}, N - 1 \right] \right\},
\end{align*}
where $\lfloor \varepsilon N \rfloor / N =: \varepsilon'$. To compute this last probability, we appeal to \citep{Karlin-66}[pp.~278], which gives
\begin{align*}
&P\left\{ W(t) \geqslant 0 \; \forall t \in [a,b] \cup \left[ \frac{1}{b}, \frac{1}{a} \right] \right\} \\
&= \int\limits_0^\infty  dx_a \int\limits_0^\infty  dx_b \int\limits_0^\infty  dx_{1/b} \int\limits_0^\infty dx_{1/a} P\{ W(a) \in [x_a, x_a + dx_a)\}  \\
& \quad \times P\{ W(b) \in [x_b, x_b + dx_b),\;W(t) \geqslant 0 \; \forall t \in [a,b] | W(a) \in [x_a, x_a + dx_a)\} \\
&\quad \times P\{ W(1/b) \in [x_{1/b}, x_{1/b} + dx_{1/b}) | W(b) \in [x_b, x_b + dx_b)\}  \\
&\quad \times P\left\{ \left. W(1/a) \in [x_{1/a}, x_{1/a} + dx_{1/a}),\;W(t) \geqslant 0 \; \forall t \in \left[ \frac{1}{b}, \frac{1}{a} \right] \right| \right. \\
&\qquad \qquad \qquad \qquad \qquad \qquad \qquad \qquad \qquad W(1/b) \in [x_{1/b}, x_{1/b} + dx_{1/b}) \bigg\} \\
&= \int\limits_0^\infty  dx_a \int\limits_0^\infty  dx_b \int\limits_0^\infty  dx_{1/b} \int\limits_0^\infty  dx_{1/a} \frac{1}{\sqrt{2 \pi a}} \frac{1}{\sqrt{2 \pi (b - a)}} \frac{1}{\sqrt{2 \pi (b^{ - 1} - b)}} \frac{1}{\sqrt{2 \pi (a^{-1} - b^{-1})}} \\
&\quad \times \exp \left( -\frac{x_a^2}{2 a} \right) \left[ \exp \left( - \frac{(x_a - x_b)^2}{2(b - a)} \right) - \exp \left( -\frac{(x_a + x_b)^2}{2 (b - a)} \right) \right] \\
&\quad \times \exp \left( -\frac{(x_{1/b} - x_b)^2}{2(b^{-1} - b)} \right)\left[ \exp \left( -\frac{(x_{1/b} - x_{1/a})^2}{2 (a^{-1} - b^{-1})} \right) - \exp \left( -\frac{(x_{1/b} + x_{1/a})^2}{2(a^{-1} - b^{-1})} \right) \right] \\
&= P_1 - P_2 - P_3 + P_4,
\end{align*}
where
\begin{multline*}
P_1 := \frac{1}{(2 \pi)^2 \sqrt{a (b - a)(b^{-1} - b)(a^{-1} - b^{-1})}} \int\limits_0^\infty  dx_a \int\limits_0^\infty dx_b \int\limits_0^\infty dx_{1/b} \int\limits_0^\infty dx_{1/a} \\
\exp \left( -\frac{x_a^2}{2a} - \frac{(x_{1/b} - x_b)^2}{2(b^{-1} - b)} - \frac{(x_a - x_b)^2}{2(b - a)} - \frac{(x_{1/b} - x_{1/a})^2}{2(a^{-1} - b^{-1})} \right),
\end{multline*}
\begin{multline*}
P_2 := \frac{1}{(2 \pi)^2 \sqrt{a (b - a)(b^{-1} - b)(a^{-1} - b^{-1})}} \int\limits_0^\infty  dx_a \int\limits_0^\infty dx_b \int\limits_0^\infty dx_{1/b} \int\limits_0^\infty dx_{1/a} \\
\exp \left( -\frac{x_a^2}{2a} - \frac{(x_{1/b} - x_b)^2}{2(b^{-1} - b)} - \frac{(x_a + x_b)^2}{2(b - a)} - \frac{(x_{1/b} - x_{1/a})^2}{2(a^{-1} - b^{-1})} \right),
\end{multline*}
\begin{multline*}
P_3 := \frac{1}{(2 \pi)^2 \sqrt{a (b - a)(b^{-1} - b)(a^{-1} - b^{-1})}} \int\limits_0^\infty  dx_a \int\limits_0^\infty dx_b \int\limits_0^\infty dx_{1/b} \int\limits_0^\infty dx_{1/a} \\
\exp \left( -\frac{x_a^2}{2a} - \frac{(x_{1/b} - x_b)^2}{2(b^{-1} - b)} - \frac{(x_a - x_b)^2}{2(b - a)} - \frac{(x_{1/b} + x_{1/a})^2}{2(a^{-1} - b^{-1})} \right),
\end{multline*}
\begin{multline*}
P_4 := \frac{1}{(2 \pi)^2 \sqrt{a (b - a)(b^{-1} - b)(a^{-1} - b^{-1})}} \int\limits_0^\infty  dx_a \int\limits_0^\infty dx_b \int\limits_0^\infty dx_{1/b} \int\limits_0^\infty dx_{1/a} \\
\exp \left( -\frac{x_a^2}{2a} - \frac{(x_{1/b} - x_b)^2}{2(b^{-1} - b)} - \frac{(x_a + x_b)^2}{2(b - a)} - \frac{(x_{1/b} + x_{1/a})^2}{2(a^{-1} - b^{-1})} \right).
\end{multline*}
Now, notice that, using Lemma~\ref{lem:inconsistence_A} of Appendix~\ref{app:B},
 \begin{align*}
&\frac{1}{\sqrt {2 \pi (a^{-1} - b^{-1})}} \int\limits_0^\infty  dx_{1/a} \exp \left( - \frac{(x_{1/b} \mp x_{1/a})^2}{2(a^{-1} - b^{-1})} \right) \\
&\qquad \qquad \qquad \qquad = \frac{1}{\sqrt{2\pi (a^{-1} - b^{-1}})} \int\limits_{\mp \frac{x_{1/b}}{\sqrt{a^{-1} - b^{-1}}}}^\infty dx_{1/a} \exp \left( - \frac{x_{1/a}^2}{2} \right) \\
&\qquad \qquad \qquad \qquad = \frac{1}{2} \pm \frac{x_{1/b}}{\sqrt{2 \pi (a^{-1} - b^{-1})}} + O\left( \frac{x_{1/b}^3}{(a^{-1} - b^{-1})^{3/2}} \right)
\end{align*}
and
\begin{align*}
&\frac{1}{2\pi \sqrt{a (a - b)}} \int\limits_0^\infty  dx_a \exp \left( -\frac{x_a^2}{2a} - \frac{(x_a \mp x_b)^2}{2(b - a)} \right) \\
&\qquad = \frac{1}{2 \pi \sqrt{a (b - a)}} \int\limits_0^\infty dx_a \exp \left( - \frac{x_a^2}{2a} - \frac{x_a^2 \mp 2 x_a x_b + x_b^2}{2 (b - a)} \right) \\
&\qquad = \frac{1}{2 \pi \sqrt{a (b - a)}} \exp \left(  - \frac{x_b^2}{2b} \right) \int\limits_0^\infty dx_a \exp \left( -\frac{b}{2 a (b - a)}{\left( x_a \mp \frac{a}{b} x_b \right)^2} \right) \\
&\qquad = \frac{1}{\sqrt{2 \pi b}} \exp \left( - \frac{x_b^2}{2 b} \right) \frac{1}{\sqrt{2 \pi}} \int\limits_{\mp \sqrt{\frac{a}{b (b - a)}} x_b}^\infty dx_a \exp \left( - \frac{x_a^2}{2} \right) \\
&\qquad = \frac{1}{\sqrt {2 \pi b}} \exp \left( -\frac{x_b^2}{2b} \right) \left[ \frac{1}{2} \pm \sqrt{\frac{a}{2 \pi b (b - a)}} x_b + O\left( \frac{a^{3/2}}{[b (b - a)]^{3/2}} x_b^3 \right) \right].
\end{align*}
Therefore,
\begin{align*}
P_{1,2,3,4} &= \frac{1}{2 \pi \sqrt{b(b^{-1} - b)}} \int\limits_0^\infty dx_b \int\limits_0^\infty dx_{1/b} \exp \left( -\frac{x_b^2}{2b} - \frac{(x_{1/b} - x_b)^2}{2 (b^{-1} - b)} \right) \\
&\qquad \qquad \times \left[ \frac{1}{2} \pm \frac{x_{1/b}}{\sqrt{2 \pi (a^{-1} - b^{-1})}} + O\left( \frac{x_{1/b}^3}{(a^{-1} - b^{-1})^{3/2}} \right) \right] \\
&\qquad \qquad \times \left[ \frac{1}{2} \pm \sqrt{\frac{a}{2 \pi b (b - a)}} x_b + O\left( \frac{a^{3/2}}{[b (b - a)]^{3/2}} x_b^3 \right) \right].
\end{align*}
Performing the integration with respect to $x_{1/b}$ gives
\begin{align*}
&\frac{1}{\sqrt{2 \pi (b^{-1} - b)}} \int\limits_0^\infty dx_{1/b} \exp \left( -\frac{(x_{1/b} - x_b)^2}{2(b^{-1} - b)} \right) \\
&\qquad \qquad \qquad \qquad \qquad \times \left[ \frac{1}{2} \pm \frac{x_{1/b}}{\sqrt{2 \pi (a^{-1} - b^{-1})}} + O\left( \frac{x_{1/b}^3}{(a^{-1} - b^{-1})^{3/2}} \right) \right] \\
&= \frac{1}{\sqrt {8 \pi}} \int\limits_{-\frac{x_b}{\sqrt{b^{-1} - b}}}^\infty dx_{1/b} \exp \left( -\frac{x_{1/b}^2}{2} \right) \\
&\quad \pm \frac{1}{2\pi \sqrt{a^{-1} - b^{-1}}} \int\limits_{-\frac{x_b}{\sqrt {b^{-1} - b}}}^\infty dx_{1/b} (x_{1/b} \sqrt{b^{-1} - b} + x_b) \exp \left( -\frac{x_{1/b}^2}{2} \right) + O(a^{3/2}) \\
&= \frac{1}{2} \Phi \left( \frac{x_b}{\sqrt{b^{-1} - b}} \right) + O(a^{3/2}) \\
&\quad \pm \left[ \frac{1}{2 \pi}\sqrt{\frac{b^{- } - b}{a^{-1} - b^{-1}}} \exp \left( -\frac{x_b^2}{2(b^{-1} - b)} \right) + \frac{x_b}{\sqrt{2 \pi (a^{-1} - b^{-1})}} \Phi \left( \frac{x_b}{\sqrt{b^{-1} - b}} \right) \right].
\end{align*}
Hence,
\begin{align*}
&P_{1,2,3,4} \\
&= \frac{1}{\sqrt{2 \pi b}} \int\limits_0^\infty dx_b \exp \left( -\frac{x_b^2}{2b} \right) \left\{ \frac{1}{2} \Phi \left( \frac{x_b}{\sqrt{b^{-1} - b}} \right) + O(a^{3/2}) \right. \\
&\quad \left. \pm \left[ \frac{1}{2 \pi} \sqrt{\frac{b^{-1} - b}{a^{-1} - b^{-1}}} \exp \left( -\frac{x_b^2}{2(b^{-1} - b)} \right) + \frac{x_b}{\sqrt{2 \pi (a^{-1} - b^{-1})}} \Phi \left( \frac{x_b}{\sqrt{b^{-1} - b}} \right) \right] \right\} \\
&\qquad \times \left\{ \frac{1}{2} \pm \sqrt{\frac{a}{2 \pi b (b - a)}} x_b + O\left( \frac{a^{3/2}}{[b (b - a)]^{3/2}} x_b^3 \right) \right\} \\
&= \frac{1}{4} \frac{1}{\sqrt{2\pi b}} \int\limits_0^\infty dx_b \exp \left( - \frac{x_b^2}{2b} \right) \Phi \left( \frac{x_b}{\sqrt{b^{-1} - b}} \right) \\
&\pm \frac{1}{\sqrt{2 \pi b}} \int\limits_0^\infty dx_b \exp \left( -\frac{x_b^2}{2b} \right) \frac{1}{2} \Phi \left( \frac{x_b}{\sqrt{b^{-1} - b}} \right) \sqrt{\frac{a}{2 \pi b (b - a)}} x_b \\
&\pm \frac{1}{\sqrt{8 \pi b}} \int\limits_0^\infty dx_b \exp \left( -\frac{x_b^2}{2b} \right) \left[ \frac{1}{2 \pi} \sqrt{\frac{b^{-1} - b}{a^{-1} - b^{-1}}} \exp \left( -\frac{x_b^2}{2 (b^{-1} - b)} \right) \right. \\
&\qquad \qquad \qquad \qquad \qquad \qquad \qquad \qquad \left. + \frac{x_b}{\sqrt{2 \pi (a^{-1} - b^{-1})}} \Phi \left( \frac{x_b}{\sqrt{b^{-1} - b}} \right) \right] \\
&\pm \frac{1}{\sqrt{2 \pi b}} \int\limits_0^\infty dx_b \exp \left( -\frac{x_b^2}{2b} \right) \sqrt{\frac{a}{2 \pi b (b - a)}} x_b \\
&\quad \times \left[ \frac{1}{2 \pi} \sqrt{\frac{b^{-1} - b}{a^{-1} - b^{-1}}} \exp \left( -\frac{x_b^2}{2 (b^{-1} - b)} \right) + + \frac{x_b}{\sqrt{2 \pi (a^{-1} - b^{-1})}} \Phi \left( \frac{x_b}{\sqrt{b^{-1} - b}} \right) \right] \\
&+ O(a^{3/2}).
\end{align*}
The first three terms of this last expression for $P_{1,2,3,4}$ will cancel out when summed up in $P\{ W(t) \geqslant 0 \; \forall t \in [a,b] \cup [b^{-1}, a^{-1}]\}$, while the fourth term will appear four times. Therefore,
\begin{align*}
&P\left\{ W(t) \geqslant 0\; \forall t \in [a,b] \cup \left[ \frac{1}{b},\frac{1}{a} \right] \right\} \\
&= O(a^{3/2}) + \frac{4}{\sqrt{2 \pi b}} \int\limits_0^\infty dx_b \exp \left( -\frac{x_b^2}{2b} \right) \sqrt{\frac{a}{2 \pi b (b - a)}} x_b \\
& \quad \times \left[ \frac{1}{2 \pi} \sqrt{\frac{b^{-1} - b}{a^{-1} - b^{-1}}} \exp \left( -\frac{x_b^2}{2 (b^{-1} - b)} \right) + \frac{x_b}{\sqrt{2 \pi (a^{-1} - b^{-1})}} \Phi \left( \frac{x_b}{\sqrt{b^{-1} - b}} \right) \right] \\
&= \frac{2}{\pi b} \sqrt{\frac{a}{b - a}} \int\limits_0^\infty dx_b \left[ \frac{1}{2 \pi} \sqrt{\frac{b^{-1} - b}{a^{-1} - b^{-1}}} x_b \exp \left( -\frac{x_b^2}{2 b (1 - b^2)} \right) + \right. \\
&\qquad \qquad \qquad \qquad \quad + \left. \frac{x_b^2}{\sqrt{2 \pi (a^{-1} - b^{-1})}} \exp \left( -\frac{x_b^2}{2 b} \right) \Phi \left( \frac{x_b}{\sqrt{b^{-1} - b}} \right) \right] + O(a^{3/2}) \\
&= \frac{a}{\pi^2 b} \sqrt{\frac{1}{b - a}} \sqrt{\frac{b^{-1} - b}{1 - a b^{-1}}} \int\limits_0^\infty dx_b x_b \exp \left( -\frac{x_b^2}{2 b (1 - b^2)} \right) + \\
&\qquad + \frac{\sqrt{2} a}{\pi^{3/2} \sqrt{b} (b - a)} \int\limits_0^\infty dx_b x_b^2\exp \left( - \frac{x_b^2}{2 b} \right) \Phi \left( \frac{x_b}{\sqrt{b^{-1} - b}} \right) + O(a^{3/2}) \\
&= \frac{a}{\pi^2 b} \frac{(1 - b^2)^{3/2}}{1 - a b^{-1}} + \frac{\sqrt{2} a}{\pi^{3/2} \sqrt{b} (b - a)} \int\limits_0^\infty dx_b x_b^2 \exp \left( -\frac{x_b^2}{2 b} \right) \Phi \left( \frac{x_b}{\sqrt{b^{-1} - b}} \right) + O(a^{3/2}).
\end{align*}
Then, using Lemma~\ref{lem:inconsistence_B} of Appendix~\ref{app:B}, we obtain
%
%
\begin{align*}
&P\left\{ W(t) \geqslant 0 \; \forall t \in [a,b] \cup \left[ \frac{1}{b},\frac{1}{a} \right] \right\} \\
&= \frac{a}{\pi^2 b} \frac{(1 - b^2)^{3/2}}{1 - a b^{-1}} + \frac{a}{\pi^2 (1 - a b^{-1})} \left[ \pi  - \arctan \left( \frac{\sqrt{1 - b^2}}{b} \right) + b \sqrt{1 - b^2} \right] + O(a^{3/2}) \\
&= \frac{a}{b} \left[ \frac{1}{\pi^2} + o(a) + O(b) \right].
\end{align*}
This result gives us the desired lower bound for $Q$, by letting $a = 1/N$ and $b = \varepsilon'$, since $|\varepsilon' - \varepsilon| < 1/N$. \qed

\section{Technical definitions and lemmas} \label{app:B}

\begin{defi}[Sub-exponential random variables, \citep{Vershynin-12}]
A random variable $x$ is said to be \emph{sub-exponential} if any of the following three conditions is met:
\begin{itemize}
\item[(a)] $P\{|x| > t\} \leqslant \exp(1 - t/K_1)$, for all $t \geqslant 0$,

\item[(b)] $(E\{|x|^p\})^{1/p} \leqslant K_2 p$, for all $p \geqslant 1$,

\item[(c)] $E\{\exp(x/K_3)\} \leqslant e$,
\end{itemize}
where $K_1, K_2, K_3 > 0$ are arbitrary constants. In this case, the \emph{sub-exponential norm} of $x$ is defined as $\|x\|_{\psi_1} := \sup _{p \geqslant 1} p^{-1} (E\{|x|^p\})^{1/p}$.
\end{defi}

\begin{remm}
The term ``sub-exponential distribution'' has unfortunately another standard and almost opposite interpretation in probability (in particular, in queueing theory) than the one given here: a sub-exponential distribution is also a class of heavy-tailed distributions (\ie those distributions $F$ whose moment generating function $M_F(t) := \int_{-\infty}^\infty e^{t x}dF(x)$ is infinite for every $t > 0$). Our definition comes from the theory of random matrices \citep{Vershynin-12}.	
\end{remm}

\begin{remm}
Notice that the class of sub-exponential random variables is reasonably large, and it includes for instance all Gaussian, Bernoulli, exponential, chi-square and bounded random variables.
\end{remm}

\begin{lem}[Perturbation of the cumulative normal distribution function] \label{lem:inconsistence_A}
For $x \in \mathbb{R}$,
\begin{align*}
\frac{1}{\sqrt{2 \pi}} \int\limits_x^\infty \exp \left( -\frac{t^2}{2} \right) dt = \frac{1}{2} - \frac{1}{\sqrt{2 \pi}} x + \frac{1}{\sqrt{72 \pi}} x^3 + O(x^5).
\end{align*}
\end{lem}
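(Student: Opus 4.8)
The plan is to recognize the left-hand side as the complementary standard-normal cumulative distribution function, $\bar\Phi(x) := \frac{1}{\sqrt{2\pi}}\int_x^\infty e^{-t^2/2}\,dt$, and to produce its Maclaurin expansion. Since $\bar\Phi(x) = \frac{1}{2} - \frac{1}{\sqrt{2\pi}}\int_0^x e^{-t^2/2}\,dt$, the whole problem reduces to expanding the elementary integral $\int_0^x e^{-t^2/2}\,dt$ about $x = 0$. I would carry this out by substituting the (everywhere convergent) power series $e^{-t^2/2} = \sum_{k=0}^\infty \frac{(-1)^k}{2^k k!} t^{2k} = 1 - \tfrac{t^2}{2} + \tfrac{t^4}{8} - \cdots$ into the integrand and integrating term by term, which is justified by uniform convergence on any bounded interval containing $x$.

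Term-by-term integration gives $\int_0^x e^{-t^2/2}\,dt = x - \frac{x^3}{6} + O(x^5)$, so that
\begin{align*}
\bar\Phi(x) = \frac{1}{2} - \frac{1}{\sqrt{2\pi}}\left( x - \frac{x^3}{6} + O(x^5) \right) = \frac{1}{2} - \frac{1}{\sqrt{2\pi}}\,x + \frac{1}{6\sqrt{2\pi}}\,x^3 + O(x^5).
\end{align*}
The only genuine bookkeeping step is then the coefficient identity: since $\sqrt{72\pi} = 6\sqrt{2\pi}$, one has $\frac{1}{6\sqrt{2\pi}} = \frac{1}{\sqrt{72\pi}}$, which matches the claimed form exactly. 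Equivalently, one could invoke Taylor's theorem directly, computing $\bar\Phi(0) = \tfrac12$, $\bar\Phi'(x) = -\tfrac{1}{\sqrt{2\pi}}e^{-x^2/2}$, $\bar\Phi''(x) = \tfrac{x}{\sqrt{2\pi}}e^{-x^2/2}$, and $\bar\Phi'''(x) = \tfrac{1}{\sqrt{2\pi}}(1-x^2)e^{-x^2/2}$, whence $\bar\Phi'(0) = -\tfrac{1}{\sqrt{2\pi}}$, $\bar\Phi''(0) = 0$, and $\bar\Phi'''(0) = \tfrac{1}{\sqrt{2\pi}}$, reproducing the same three coefficients.

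The one point worth stating explicitly, to justify that the remainder is $O(x^5)$ rather than merely $O(x^4)$, is that $\bar\Phi(x) - \tfrac12 = -\frac{1}{\sqrt{2\pi}}\int_0^x e^{-t^2/2}\,dt$ is an \emph{odd} function of $x$ (the integrand is even, so the integral is odd). Consequently every even-order coefficient in its Maclaurin series vanishes, and in particular the $x^4$ term is automatically absent; the first neglected term is of order $x^5$. This is the substantive observation, and there is no real analytic obstacle beyond it: the result is a routine power-series expansion, and the entire difficulty is the trivial algebraic verification $6\sqrt{2\pi} = \sqrt{72\pi}$ together with the parity argument controlling the error order.
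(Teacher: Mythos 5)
Your proposal is correct and follows essentially the same route as the paper's proof: split the integral at $0$, expand $e^{-t^2/2}$ in its Maclaurin series, integrate term by term (so the $O(t^4)$ integrand term immediately yields an $O(x^5)$ remainder), and identify $6\sqrt{2\pi} = \sqrt{72\pi}$. Your added parity observation and the Taylor-derivative alternative are fine but not needed beyond what the term-by-term integration already gives.
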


\begin{proof}
\begin{align*}
\frac{1}{\sqrt{2 \pi}} \int\limits_x^\infty \exp \left( -\frac{t^2}{2} \right) dt %
&= \frac{1}{\sqrt{2 \pi}} \int\limits_0^\infty \exp \left( -\frac{t^2}{2} \right) dt - \frac{1}{\sqrt{2 \pi}} \int\limits_0^x \exp \left( -\frac{t^2}{2} \right) dt \\
&= \frac{1}{2} - \frac{1}{\sqrt{2 \pi}} \int\limits_0^x \left[ 1 - \frac{t^2}{2} + O(t^4) \right] dt \\
&= \frac{1}{2} - \frac{1}{\sqrt{2 \pi}} \left[ x - \frac{x^3}{6} + O(x^5) \right] \\
&= \frac{1}{2} - \frac{1}{\sqrt{2 \pi}} x + \frac{1}{\sqrt{72 \pi}}{x^3} + O(x^5).
\end{align*}
\end{proof}

\begin{lem}[Integral] \label{lem:inconsistence_B}
Let $b \in (0,1)$, and $\Phi$ be the cumulative standard normal distribution function. Then,
\begin{align*}
\int\limits_0^\infty dx x^2 \exp \left( -\frac{x^2}{2 b} \right) \Phi \left( \frac{x}{\sqrt{b^{-1} - b}} \right) = \frac{b^{3/2}}{\sqrt{2 \pi}} \left[ \pi  - \arctan\left( \frac{\sqrt{1 - b^2}}{b} \right) + b \sqrt{1 - b^2} \right].
\end{align*}
\end{lem}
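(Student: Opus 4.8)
The plan is to evaluate the integral by differentiation under the integral sign (the Feynman trick), introducing an auxiliary parameter in the argument of $\Phi$. Write $c := \sqrt{b^{-1} - b}$, so that the target integral is $I(t^*)$ evaluated at $t^* = 1/c = \sqrt{b/(1 - b^2)}$, where
\begin{equation*}
I(t) := \int_0^\infty x^2 \exp\!\left( -\frac{x^2}{2b} \right) \Phi(tx)\, dx.
\end{equation*}
First I would differentiate under the integral sign. Since $\frac{d}{dt}\Phi(tx) = x\,\phi(tx) = \frac{x}{\sqrt{2\pi}} e^{-t^2 x^2/2}$ (with $\phi$ the standard normal density), and the resulting integrand is dominated by an integrable function locally uniformly in $t$, so the interchange is justified by dominated convergence, I obtain
\begin{equation*}
I'(t) = \frac{1}{\sqrt{2\pi}} \int_0^\infty x^3 \exp\!\left( -\frac{x^2}{2}\Big( \frac{1}{b} + t^2 \Big) \right) dx = \frac{2}{\sqrt{2\pi}\,(b^{-1} + t^2)^2},
\end{equation*}
where the last equality uses the elementary Gaussian moment $\int_0^\infty x^3 e^{-\alpha x^2/2}\, dx = 2/\alpha^2$.

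Next I would pin down the boundary value at $t = 0$. Since $\Phi(0) = 1/2$,
\begin{equation*}
I(0) = \frac{1}{2} \int_0^\infty x^2 e^{-x^2/(2b)}\, dx = \frac{\sqrt{2\pi}}{4}\, b^{3/2} = \frac{b^{3/2}}{\sqrt{2\pi}} \cdot \frac{\pi}{2}.
\end{equation*}
Then I recover $I(t^*)$ via the fundamental theorem of calculus, $I(t^*) = I(0) + \int_0^{t^*} I'(s)\, ds$, reducing the problem to the standard antiderivative
\begin{equation*}
\int_0^{T} \frac{ds}{(a^2 + s^2)^2} = \frac{1}{2 a^3} \left[ \arctan \frac{T}{a} + \frac{a T}{a^2 + T^2} \right],
\end{equation*}
with $a = b^{-1/2}$ and $T = t^*$.

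The last step is the algebraic and trigonometric simplification. With $a = b^{-1/2}$ and $T = \sqrt{b/(1 - b^2)}$ one computes $T/a = b/\sqrt{1 - b^2}$, $a^2 + T^2 = 1/(b(1 - b^2))$, hence $aT/(a^2 + T^2) = b\sqrt{1 - b^2}$, while $1/(2a^3) = b^{3/2}/2$. Multiplying by the prefactor $2/\sqrt{2\pi}$ gives
\begin{equation*}
\int_0^{t^*} I'(s)\, ds = \frac{b^{3/2}}{\sqrt{2\pi}} \left[ \arctan \frac{b}{\sqrt{1 - b^2}} + b\sqrt{1 - b^2} \right].
\end{equation*}
The one nonroutine point is that the arctangent produced here has argument $b/\sqrt{1 - b^2}$, whereas the claimed answer features $\sqrt{1 - b^2}/b$; these are reconciled by the complementary-angle identity $\arctan x + \arctan(1/x) = \pi/2$ for $x > 0$, which turns $\arctan(b/\sqrt{1 - b^2})$ into $\pi/2 - \arctan(\sqrt{1 - b^2}/b)$. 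Adding the boundary term $I(0) = (b^{3/2}/\sqrt{2\pi})(\pi/2)$ then combines the two factors of $\pi/2$ into $\pi$, yielding exactly
\begin{equation*}
I(t^*) = \frac{b^{3/2}}{\sqrt{2\pi}} \left[ \pi - \arctan \frac{\sqrt{1 - b^2}}{b} + b\sqrt{1 - b^2} \right].
\end{equation*}
I expect the bookkeeping in this final simplification, and in particular recognizing the complementary-angle identity, to be the only delicate part; everything else reduces to a direct Gaussian-moment computation and an evaluation of a standard rational antiderivative.
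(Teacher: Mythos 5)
Your proof is correct, but it takes a genuinely different route from the paper. The paper unfolds $\Phi$ into its defining integral, rescales $x \mapsto \sqrt{b}\,x$ to reduce everything to a standard bivariate Gaussian integrated over the wedge $\{(x,y): x > 0,\ y < bx/\sqrt{1-b^2}\}$, and then passes to polar coordinates: the answer factors as a radial moment $\int_0^\infty r^3 e^{-r^2/2}\,dr = 2$ times the angular integral $\int_{-\pi/2}^{\arctan(b/\sqrt{1-b^2})} \cos^2\theta\,d\theta$, which is where the $\arctan$ and the $b\sqrt{1-b^2}$ term (via $\tfrac12\sin 2\theta$) come from. You instead keep everything one-dimensional: differentiating $I(t) = \int_0^\infty x^2 e^{-x^2/(2b)}\Phi(tx)\,dx$ under the integral sign collapses $\Phi$ to a Gaussian density, the resulting $\int_0^\infty x^3 e^{-\alpha x^2/2}\,dx = 2/\alpha^2$ is elementary, and the $\arctan$ emerges from the rational antiderivative $\int_0^T (a^2+s^2)^{-2}\,ds$. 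Your dominated-convergence justification for the interchange is adequate (the $t$-derivative of the integrand is bounded by $x^3 e^{-x^2/(2b)}/\sqrt{2\pi}$ uniformly in $t$), your boundary value $I(0) = \tfrac{\sqrt{2\pi}}{4}b^{3/2}$ is right, and the algebra with $a = b^{-1/2}$, $T = \sqrt{b/(1-b^2)}$ checks out. Both arguments converge on the same final step, the complementary-angle identity $\arctan(b/\sqrt{1-b^2}) = \pi/2 - \arctan(\sqrt{1-b^2}/b)$; the paper's geometric picture makes the provenance of the $\arctan$ (the opening angle of a wedge) transparent, while your parameter-differentiation argument avoids the two-dimensional change of variables entirely and rests only on single-variable calculus facts.
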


\begin{proof}
\begin{align*}
&\int\limits_0^\infty dx x^2 \exp \left( -\frac{x^2}{2 b} \right) \Phi \left( \frac{x}{\sqrt{b^{-1} - b}} \right) \\
&\quad = \frac{1}{\sqrt{2 \pi}} \int\limits_0^\infty dx \int\limits_{-\infty}^{\frac{x}{\sqrt{b^{-1} - b}}} dy x^2 \exp \left( -\frac{x^2}{2 b} - \frac{y^2}{2} \right) \\
&\quad = \frac{b^{3/2}}{\sqrt{2 \pi}} \int\limits_0^\infty dx \int\limits_{-\infty}^{\frac{b x}{\sqrt{1 - b^2}}} dy x^2 \exp \left( -\frac{x^2 + y^2}{2} \right) \\
&\quad = \frac{b^{3/2}}{\sqrt{2 \pi}} \int\limits_0^\infty dr \int\limits_{-\pi / 2}^{\arctan(b / \sqrt{1 - b^2})} d\theta \,r r^2 \cos^2(\theta) \exp \left( -\frac{r^2}{2} \right) \\
&\quad = \frac{b^{3/2}}{\sqrt{2 \pi}} \left[ \int\limits_{-\pi / 2}^{\arctan(b/\sqrt{1 - b^2})} d\theta \frac{1 + \cos(2 \theta)}{2} \right] \cdot 2 \int\limits_0^\infty du \,u \exp(-u) \\
&\quad = \frac{b^{3/2}}{\sqrt{2 \pi}} \left[ \theta + \frac{1}{2} \sin(2 \theta) \right]_{-\pi / 2}^{\arctan(b / \sqrt{1 - b^2})} \left[ \left. - u e^{-u} \right|_{u = 0}^\infty + \int\limits_0^\infty du \exp(-u) \right] \\
&\quad = \frac{b^{3/2}}{\sqrt{2 \pi}} \left[ \arctan(b / \sqrt{1 - b^2}) + \frac{\pi}{2} \right. \\
&\qquad \qquad \qquad \left. + \sin(\arctan(b / \sqrt{1 - b^2})) \cos(\arctan(b / \sqrt{1 - b^2})) \right] \\
&\quad = \frac{b^{3/2}}{\sqrt{2 \pi}} \left[ \arctan(b / \sqrt{1 - b^2}) + \frac{\pi}{2} + b \sqrt{1 - b^2} \right] \\
&\quad = \frac{b^{3/2}}{\sqrt{2 \pi}} \left[ \pi - \arctan(\sqrt{1 - b^2} / b) + b\sqrt{1 - b^2} \right].
\end{align*}
\end{proof}

\begin{lem}[Bound for weighted sub-exponential sums] \label{lem:B1}
Let $\ve{\varepsilon} \in \mathbb{R}^N$ be a vector of independent zero mean sub-exponential random variables, with $\kappa := \max_{1 \leqslant i \leqslant N} \|\varepsilon_i\|_{\psi_1}$, and let $\ve{\alpha} \in \mathbb{R}^N$ be a deterministic vector. Then, for every $x \geqslant 0$,
\begin{align*}
P\left\{ |\ve{\alpha}^T \ve{\varepsilon}| \geqslant x \right\} \leqslant 2 \exp \left[ -c \min \left( \frac{x^2}{\kappa^2 \| \ve{\alpha} \|_2^2}, \frac{x}{\kappa \| \ve{\alpha} \|_\infty} \right) \right],
\end{align*}
where $c > 0$ is an absolute constant.
\end{lem}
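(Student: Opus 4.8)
The plan is to establish this Bernstein-type tail bound by the classical exponential-moment (Chernoff) method, handling the two tails $\{\ve{\alpha}^T\ve{\varepsilon} \geq x\}$ and $\{\ve{\alpha}^T\ve{\varepsilon} \leq -x\}$ separately and combining them by a union bound to produce the prefactor $2$. For the upper tail I would write, for any $s > 0$,
$$P\{\ve{\alpha}^T\ve{\varepsilon} \geq x\} \leq e^{-sx}\, E\!\left[\exp(s\,\ve{\alpha}^T\ve{\varepsilon})\right] = e^{-sx}\prod_{i=1}^N E\!\left[\exp(s\alpha_i\varepsilon_i)\right],$$
the factorization using independence of the components. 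The lower tail is identical after replacing $\ve{\varepsilon}$ by $-\ve{\varepsilon}$, which is again a zero-mean vector of sub-exponential variables with the same $\psi_1$ norms.

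The key ingredient is a moment-generating-function bound for a single centered sub-exponential variable: if $E[\varepsilon_i] = 0$ and $\|\varepsilon_i\|_{\psi_1} = K_i \leq \kappa$, then there are absolute constants $C, c_0 > 0$ such that $E[\exp(s\varepsilon_i)] \leq \exp(C \kappa^2 s^2)$ whenever $|s| \leq c_0/\kappa$. I would prove this directly from characterization~(b) of the definition of sub-exponentiality: expand $E[\exp(s\varepsilon_i)] = 1 + s E[\varepsilon_i] + \sum_{p \geq 2} s^p E[\varepsilon_i^p]/p!$, observe that the zero-mean assumption kills the linear term, bound $|E[\varepsilon_i^p]| \leq E|\varepsilon_i|^p \leq (\kappa p)^p$, and use $p! \geq (p/e)^p$ to dominate the remaining series by the geometric sum $\sum_{p\geq 2}(e|s|\kappa)^p$, which converges and is $O(s^2\kappa^2)$ once $e|s|\kappa$ is bounded away from $1$. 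Applying this to each factor with $s\alpha_i$ in place of $s$ --- valid for all $i$ provided $|s| \leq c_0/(\kappa\|\ve{\alpha}\|_\infty)$ --- and multiplying gives
$$P\{\ve{\alpha}^T\ve{\varepsilon} \geq x\} \leq \exp\!\left(-sx + C\kappa^2\|\ve{\alpha}\|_2^2\, s^2\right), \qquad 0 \leq s \leq \frac{c_0}{\kappa\|\ve{\alpha}\|_\infty}.$$

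It then remains to optimize the exponent over the admissible range of $s$, which produces the two regimes in the statement. The unconstrained minimizer is $s^\star = x/(2C\kappa^2\|\ve{\alpha}\|_2^2)$; if $s^\star$ respects the constraint one recovers the sub-Gaussian term $\exp(-x^2/(4C\kappa^2\|\ve{\alpha}\|_2^2))$, whereas if $s^\star$ exceeds $c_0/(\kappa\|\ve{\alpha}\|_\infty)$ one instead sets $s$ to the boundary value, yielding the linear term $\exp(-c' x/(\kappa\|\ve{\alpha}\|_\infty))$. Taking the minimum of the two exponents and absorbing constants gives the claimed $\exp[-c\min(x^2/(\kappa^2\|\ve{\alpha}\|_2^2), x/(\kappa\|\ve{\alpha}\|_\infty))]$, and the union over the two tails supplies the factor $2$. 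The main obstacle is the single-variable MGF bound: the Chernoff argument and the two-regime optimization are routine, but one must carefully track that centering is what allows quadratic (rather than merely linear) control of the log-MGF near the origin, and that the local nature of this bound --- valid only for $|s| \lesssim 1/\kappa$ --- is exactly what forces the crossover to the exponential tail for large $x$.
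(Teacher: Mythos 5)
Your proof is correct. The paper gives no argument of its own for this lemma---its ``proof'' is a one-line citation to Proposition~5.16 of Vershynin's notes on non-asymptotic random matrix analysis---and your derivation (Chernoff bound with factorization by independence, the local MGF estimate $E[\exp(s\varepsilon_i)] \leqslant \exp(C\kappa^2 s^2)$ for $|s| \leqslant c_0/\kappa$ obtained from the moment characterization of the $\psi_1$ norm, and the two-regime optimization over $s$ producing the sub-Gaussian and exponential branches of the bound) is precisely the standard proof of that cited proposition, so you have supplied in full the details the paper delegates to the reference.
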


\begin{proof}
See \citep{Vershynin-12}[Proposition~5.16].
\end{proof}

\begin{lem}[Min-max probability bound for sub-exponential sums] \label{lem:B2}
Let $\ve{\varepsilon} \in \mathbb{R}^N$ be a vector of independent zero mean sub-exponential continuous random variables, with $\kappa := \max_{1 \leqslant i \leqslant N} \|\varepsilon_i\|_{\psi_1}$. Then, for every $\lambda > 0$ and $0 < \delta < 1$:
\begin{multline*}
P\left\{ \min\limits_{1 \leqslant t_1 \leqslant \delta N} \max\limits_{t_1 \leqslant t_2 \leqslant N} \frac{1}{t_2 - t_1 + 1} \sum\limits_{t = t_1}^{t_2} \varepsilon_t  > \lambda \right\} \\
\leqslant 2 (1 - \delta) N \exp \left[  -(1 - \delta ) N c \min \left( \frac{\lambda^2}{\kappa^2}, \frac{\lambda}{\kappa} \right) \right],
\end{multline*}
where $c > 0$ is an absolute constant.
\end{lem}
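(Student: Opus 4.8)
The plan is to reduce the nested extremal event to a union of one-sided tail events for the sample mean of a single block \emph{anchored at the left endpoint} $t=1$, and then to invoke the sub-exponential concentration inequality of Lemma~\ref{lem:B1}. First I would restate the event in logical form. Writing $A(t_1,t_2):=\frac{1}{t_2-t_1+1}\sum_{t=t_1}^{t_2}\varepsilon_t$, the inequality $\min_{1\le t_1\le\delta N}\max_{t_1\le t_2\le N}A(t_1,t_2)>\lambda$ holds if and only if for \emph{every} admissible left endpoint $t_1\in\{1,\dots,\lfloor\delta N\rfloor\}$ there exists a right endpoint $t_2\ge t_1$ with $A(t_1,t_2)>\lambda$; call this event $E$.

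The crucial step, which I expect to be the only non-routine part, is a greedy concatenation that converts this ``for all $t_1$, there exists $t_2$'' statement into the existence of one \emph{long} block starting at $t=1$ whose mean already exceeds $\lambda$. Set $a_1=1$; on $E$ there is $b_1\ge a_1$ with $A(a_1,b_1)>\lambda$. If $b_1<\lfloor\delta N\rfloor$, set $a_2=b_1+1\le\lfloor\delta N\rfloor$ and repeat, obtaining $b_2\ge a_2$ with $A(a_2,b_2)>\lambda$, and so on. Since the $a_j$ strictly increase, the procedure stops at the first index $k$ with $b_k\ge\lfloor\delta N\rfloor$, having partitioned $\{1,\dots,b_k\}$ into consecutive blocks each of mean $>\lambda$. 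Because a concatenation of blocks each with mean $>\lambda$ has overall mean $>\lambda$ (the total sum exceeds $\lambda$ times the total length), I conclude that on $E$ there is an index $b$ with $\lfloor\delta N\rfloor\le b\le N$ and $\frac1b\sum_{t=1}^b\varepsilon_t>\lambda$. Hence
\[
E\ \subseteq\ \bigcup_{b=\lfloor\delta N\rfloor}^{N}\Bigl\{\,\tfrac1b\sum_{t=1}^b\varepsilon_t>\lambda\,\Bigr\}.
\]

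Finally I would close the estimate by a union bound followed by Lemma~\ref{lem:B1}. For each $b$ take the weight vector $\alpha=b^{-1}\mathbf 1_b$ supported on $\{1,\dots,b\}$, so that $\alpha^T\varepsilon=\frac1b\sum_{t=1}^b\varepsilon_t$ and $\|\alpha\|_2^2=\|\alpha\|_\infty=1/b$; Lemma~\ref{lem:B1} then gives $P\{\frac1b\sum_{t=1}^b\varepsilon_t>\lambda\}\le 2\exp[-c\,b\min(\lambda^2/\kappa^2,\lambda/\kappa)]$. Every surviving block has length at least $\lfloor\delta N\rfloor$, so each of the at most $(1-\delta)N+1$ terms is bounded by $2\exp[-c\lfloor\delta N\rfloor\min(\lambda^2/\kappa^2,\lambda/\kappa)]$, and summing assembles into the claimed product of a $2(1-\delta)N$ prefactor and an exponential whose rate is governed by the guaranteed block length. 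The reduction to an anchored block is exactly what lets the outer $\min$-over-$t_1$ manifest as a lower bound on the length of the offending block, which in turn supplies the exponent; the tail step is then a direct application of Lemma~\ref{lem:B1}, and the continuity assumption on $\ve{\varepsilon}$ only serves to make the maximizing endpoints almost surely well defined.
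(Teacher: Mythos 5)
Your reduction is correct and genuinely different from the paper's, and it is arguably more elementary: the paper reverses time ($\tilde\varepsilon_t := \varepsilon_{N+1-t}$), recasts the complementary event as the statement that the drifted random walk $\sum_{k \leqslant t}(\tilde\varepsilon_k - \lambda)$ attains its minimum among the last $\delta N$ indices, and then decomposes over the location of that minimizer in the style of fluctuation theory (\cf \eqref{eq:42}), bounding each term by a single anchored-block tail $P\{\frac{1}{N-t}\sum_{k=1}^{N-t}\varepsilon_k > \lambda\}$ via Lemma~\ref{lem:B1}. Your greedy concatenation reaches the same family of anchored-block events $\{\frac{1}{b}\sum_{t=1}^{b}\varepsilon_t > \lambda\}$, $b \geqslant \lfloor\delta N\rfloor$, directly, with no time reversal and no argmin decomposition; the union bound and the application of Lemma~\ref{lem:B1} are then identical to the paper's. (The continuity assumption, which the paper uses to exclude ties in the argmin, plays no role in your argument.)

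The genuine issue is your last sentence. Writing $\mu := \min(\lambda^2/\kappa^2, \lambda/\kappa)$, what your argument proves is
\begin{align*}
P \;\leqslant\; 2\,\bigl[N - \lfloor\delta N\rfloor + 1\bigr]\,\exp\bigl[-c\,\lfloor\delta N\rfloor\,\mu\bigr],
\end{align*}
i.e.\ the exponential rate is governed by $\delta N$ (the guaranteed block length), \emph{not} by the $(1-\delta)N$ appearing in the lemma; for $\delta < 1/2$ this is strictly weaker than the stated bound, so it does not ``assemble into the claimed product.'' This mismatch, however, is not a defect of your approach: the stated bound cannot be proved because it is false as written. The paper's own proof makes exactly the same leap in its final display, bounding $\sum_{t=\delta N}^{N-1} 2\exp[-ct\mu]$ by $2(1-\delta)N\exp[-(1-\delta)Nc\mu]$ even though the largest summand is the one with $t = \delta N$; the correct conclusion there is $2(1-\delta)N\exp[-c\,\delta N\,\mu]$, which is precisely your bound. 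One can also see that the stated exponent has the wrong monotonicity in $\delta$: for Gaussian noise, $P \geqslant P\{\varepsilon_t > \lambda \;\forall t \leqslant \delta N\} = p^{\delta N}$ with $p := P\{\varepsilon_1 > \lambda\} > 0$, and for $\delta$ small enough this exceeds $2(1-\delta)N\exp[-(1-\delta)Nc\mu]$ for all large $N$, whatever the absolute constant $c$. Note finally that Corollary~\ref{cor:B1} --- the statement actually invoked in the proof of Theorem~\ref{thm:4} --- carries the exponent $c\delta N/(1-\delta)$, consistent with what you (and the paper's proof, once its last inequality is repaired) actually establish. In short: your proof is sound, and it is the lemma's statement that should be corrected to have $\delta N$ in the exponent.
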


\begin{proof}
Let
\begin{align*}
Q := P\left\{ \min\limits_{1 \leqslant t_1 \leqslant \delta N} \max\limits_{t_1 \leqslant t_2 \leqslant N} \frac{1}{t_2 - t_1 + 1} \sum\limits_{t = t_1}^{t_2} \varepsilon_t < \lambda \right\}.
\end{align*}
Our goal is to obtain an upper bound for $1 - Q$. To this end, notice that
\begin{align} \label{eq:41}
&Q \\
&= P\left\{ \exists 1 \leqslant t_1 \leqslant \delta N \text{ s.t. } \max\limits_{t_1 \leqslant t_2 \leqslant N} \frac{1}{t_2 - t_1 + 1} \sum\limits_{t = t_1}^{t_2} \varepsilon_t < \lambda \right\} \nonumber \\
&= P\left\{ \exists 0 \leqslant t_1 \leqslant \delta N - 1 \text{ s.t. } \forall t_1 < t_2 \leqslant N, \sum\limits_{t = t_1 + 1}^{t_2} \varepsilon_t  < \lambda (t_2 - t_1) \right\} \nonumber \\
&= P\left\{ \exists 0 \leqslant t_1 \leqslant \delta N - 1 \text{ s.t. } \forall t_1 < t_2 \leqslant N, \sum\limits_{t = 1}^{t_2} \varepsilon_t < \lambda (t_2 - t_1) + \sum\limits_{t = 1}^{t_1} \varepsilon_t \right\} \nonumber \\
&= P\left\{ \exists (1 - \delta) N + 1 \leqslant \tilde{t}_1 \leqslant N \text{ s.t. } \forall 0 \leqslant \tilde{t}_2 < \tilde{t}_1, \sum\limits_{t = 1}^{\tilde{t}_1} (\tilde{\varepsilon}_t - \lambda) < \sum\limits_{t = 1}^{\tilde{t}_2} (\tilde{\varepsilon}_t - \lambda) \right\} \nonumber \\
&= P\left\{ \exists (1 - \delta) N + 1 \leqslant \tilde{t}_1 \leqslant N \text{ s.t. } \sum\limits_{t = 1}^{\tilde{t}_1} (\tilde{\varepsilon}_t - \lambda) < \min\limits_{0 \leqslant \tilde{t}_2 \leqslant \tilde{t}_1 - 1} \sum\limits_{t = 1}^{\tilde{t}_2} (\tilde{\varepsilon}_t - \lambda) \right\}, \nonumber
\end{align}
where $\tilde{\ve{\varepsilon}} \in \mathbb{R}^N$ is a random vector given by $\tilde{\varepsilon}_t := \varepsilon_{N + 1 - t}$. Furthermore, in the last line of \eqref{eq:41} we can restrict the range of $\tilde{t}_2$ to $\{0, \ldots, (1 - \delta) N\}$, since if for some $(1 - \delta) N + 1 \leqslant \tilde{t}_1 \leqslant N$ the minimizing $\tilde{t}_2 = \tilde{t}_2^*$ is larger than $(1 - \delta) N$, then the inequality obviously holds by taking   $\tilde{t}_2 = \tilde{t}_2^*$ and $\tilde{t}_2 = (1 - \delta) N$. Conversely, if
\begin{align*}
\sum\limits_{t = 1}^{\tilde{t}_1} (\tilde{\varepsilon}_t - \lambda) < \min\limits_{0 \leqslant \tilde{t}_2 \leqslant (1 - \delta) N} \sum\limits_{t = 1}^{\tilde{t}_2} (\tilde{\varepsilon}_t - \lambda),
\end{align*}
for some $\tilde{t}_1 = \tilde{t}_1^* \in \{(1 - \delta) N + 1, \ldots, N\}$, then it also holds that
\begin{align*}
\min\limits_{(1 - \delta) N + 1 \leqslant \tilde{t}_1 \leqslant N} \displaystyle \sum\limits_{t = 1}^{\tilde{t}_1} (\tilde{\varepsilon}_t - \lambda) <  \min\limits_{0 \leqslant \tilde{t}_2 \leqslant \tilde{t}_1 - 1} \displaystyle \sum\limits_{t = 1}^{\tilde{t}_2} (\tilde{\varepsilon}_t - \lambda).
\end{align*}
Therefore,
\begin{align*}
Q &= P\left\{ \exists (1 - \delta) N + 1 \leqslant \tilde{t}_1 \leqslant N \text{ s.t. } \sum\limits_{t = 1}^{\tilde{t}_1} (\tilde {\varepsilon}_t - \lambda) < \min\limits_{0 \leqslant \tilde{t}_2 \leqslant (1 - \delta) N} \sum\limits_{t = 1}^{\tilde{t}_2} (\tilde {\varepsilon}_t - \lambda) \right\} \\
&= P\left\{ \min\limits_{(1 - \delta) N + 1 \leqslant \tilde{t}_1 \leqslant N} \sum\limits_{t = 1}^{\tilde{t}_1} (\tilde{\varepsilon}_t - \lambda)  < \min\limits_{0 \leqslant \tilde{t}_2 \leqslant (1 - \delta) N} \sum\limits_{t = 1}^{\tilde{t}_2} (\tilde{\varepsilon}_t - \lambda) \right\}.
\end{align*}
This last expression shows that $Q$ is the probability that the minimum of the random walk $\left\{ \sum\nolimits_{k = 1}^t (\tilde{\varepsilon}_k - \lambda) \right\}_t$ lies in $\{(1 - \delta) N + 1, \ldots, N\}$ (since the $\varepsilon_t$'s have continuous distributions, the probability that such random walk attains its minimum at more than one time instant is zero). This quantity can be computed in principle by using techniques from fluctuation theory (see, \eg \citep{Feller-62}). In particular, if we denote $\sum\nolimits_{k = a}^b (\tilde{\varepsilon}_k - \lambda)$ by $S(a, b)$, then, inspired by \citep{Spitzer-56}[equation~(5.3)], we have that
\begin{align} \label{eq:42}
1 - Q &= P\left\{ \min\limits_{(1 - \delta) N + 1 \leqslant \tilde{t}_1 \leqslant N} S(1, \tilde{t}_1) > \min\limits_{0 \leqslant \tilde{t}_2 \leqslant (1 - \delta) N} S(1, \tilde{t}_2) \right\} \\
&= \sum\limits_{t = 1}^{(1 - \delta) N} P\left\{ S(1,t) = \min\limits_{0 \leqslant \tilde{t} \leqslant N} S(1, \tilde{t}) \right\} \nonumber \\
&= \sum\limits_{t = 1}^{(1 - \delta) N} P\left\{ S(1,t) < \min\limits_{0 \leqslant \tilde{t} \leqslant t - 1} S(1, \tilde{t}) \text{ and } S(1,t) < \min\limits_{t + 1 \leqslant \tilde{t} \leqslant N} S(1, \tilde{t}) \right\} \nonumber \\
&= \sum\limits_{t = 1}^{(1 - \delta) N} P\left\{ \max\limits_{0 \leqslant \tilde{t} \leqslant t - 1} S(\tilde{t} + 1, t) < 0 \text{ and } \min\limits_{t + 1 \leqslant \tilde{t} \leqslant N} S(t + 1, \tilde{t}) > 0 \right\} \nonumber \\
&\leqslant \sum\limits_{t = 1}^{(1 - \delta) N} P\left\{ \min\limits_{t + 1 \leqslant \tilde{t} \leqslant N} S(t + 1, \tilde{t}) > 0 \right\}, \nonumber
\end{align}
where in the fourth equality we have used the additive property of $S$, namely, that $S(a,b) + S(b + 1,c) = S(a,c)$ for all $a \leqslant b \leqslant c$. Notice that, for all $t \in \mathbb{N}$,
\begin{align*}
P\left\{ \min\limits_{t + 1 \leqslant \tilde{t} \leqslant N} S(t + 1, \tilde{t}) > 0 \right\} %
&= P\left\{ \min\limits_{t + 1 \leqslant \tilde{t} \leqslant N} \sum\limits_{k = t + 1}^{\tilde{t}} (\tilde{\varepsilon}_k - \lambda)  > 0 \right\} \\
&= P\left\{ \sum\limits_{k = t + 1}^{\tilde{t}} (\tilde{\varepsilon}_k - \lambda) > 0, \; \forall t + 1 \leqslant \tilde{t} \leqslant N \right\} \\
&\leqslant P\left\{ \sum\limits_{k = t + 1}^N (\tilde{\varepsilon}_k - \lambda)  > 0 \right\} \\
&= P\left\{ \frac{1}{N - t} \sum\limits_{k = t + 1}^N \tilde{\varepsilon}_k > \lambda \right\} \\
&= P\left\{ \frac{1}{N - t} \sum\limits_{k = 1}^{N - t} \varepsilon_k > \lambda \right\}.
\end{align*}
Therefore, by Lemma~\ref{lem:B1} (taking $\alpha = [t^{-1} \; \cdots \; t^{-1}]^T \in \mathbb{R}^t$),
\begin{align*}
1 - Q %
&\leqslant \sum\limits_{t = \delta N}^{N - 1} P\left\{ \frac{1}{t} \sum\limits_{k = 1}^t \varepsilon_k > \lambda \right\} \\
&\leqslant \sum\limits_{t = \delta N}^{N - 1} 2 \exp \left[ -c t \min \left( \frac{\lambda^2}{\kappa^2}, \frac{\lambda}{\kappa} \right) \right] \\
&\leqslant 2 (1 - \delta) N \exp \left[ -(1 - \delta) N c \min \left( \frac{\lambda^2}{\kappa^2}, \frac{\lambda}{\kappa} \right) \right].
\end{align*}
This concludes the proof.
\end{proof}

\begin{cor}[Min-max probability bound for centered sub-exponential sums] \label{cor:B1}
Let $\ve{\varepsilon} \in \mathbb{R}^N$ be a vector of independent zero mean sub-exponential continuous random variables, with $\kappa := \max_{1 \leqslant i \leqslant N} \|\varepsilon_i\|_{\psi_1}$. Then, for every $\lambda > 0$ and $0 < \delta \leqslant 1/2$:
\begin{multline*}
P\left\{  \min\limits_{1 \leqslant t_1 \leqslant \delta N} \max\limits_{t_1 \leqslant
t_2 \leqslant N} \frac{1}{t_2 - t_1 + 1} \sum\limits_{t = t_1}^{t_2} (\varepsilon_t - \bar{\varepsilon}_{t_1}) > \lambda \right\} \\
\leqslant 2 [(1 - \delta) N + 1] \exp \left[ -\frac{c \delta N}{1 - \delta} \min \left( \frac{\lambda^2}{\kappa^2}, \frac{\lambda}{\kappa} \right) \right],
\end{multline*}
where $\bar{\varepsilon}_{t_1} := (N - t_1 + 1)^{-1} \sum\nolimits_{t = t_1}^N \varepsilon_t$, and $c > 0$ is an absolute constant.
\end{cor}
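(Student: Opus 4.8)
The plan is to reduce the centered min--max quantity to the uncentered bound of Lemma~\ref{lem:B2}, by peeling off the data-dependent recentering term $\bar{\varepsilon}_{t_1}$. Since $\bar{\varepsilon}_{t_1}$ does not depend on $t_2$, for each fixed $t_1$ one has
\begin{align*}
\max_{t_1 \leqslant t_2 \leqslant N} \frac{1}{t_2 - t_1 + 1} \sum_{t = t_1}^{t_2} (\varepsilon_t - \bar{\varepsilon}_{t_1}) = Y_{t_1} - \bar{\varepsilon}_{t_1}, \qquad Y_{t_1} := \max_{t_1 \leqslant t_2 \leqslant N} \frac{1}{t_2 - t_1 + 1} \sum_{t = t_1}^{t_2} \varepsilon_t,
\end{align*}
so the event whose probability must be bounded is $\{ Y_{t_1} - \bar{\varepsilon}_{t_1} > \lambda \text{ for all } 1 \leqslant t_1 \leqslant \delta N \}$.

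I would then split on the ``good'' event $G := \{ \bar{\varepsilon}_{t_1} \geqslant -\lambda/2 \text{ for all } 1 \leqslant t_1 \leqslant \delta N \}$. On $G$ the target event forces $Y_{t_1} > \lambda + \bar{\varepsilon}_{t_1} \geqslant \lambda/2$ for every $t_1$, hence $\min_{t_1} Y_{t_1} > \lambda/2$, which is precisely the event controlled by Lemma~\ref{lem:B2} with $\lambda$ replaced by $\lambda/2$. This gives
\begin{align*}
P\left\{ Y_{t_1} - \bar{\varepsilon}_{t_1} > \lambda \ \text{for all } 1 \leqslant t_1 \leqslant \delta N \right\} \leqslant P\left\{ \min_{1 \leqslant t_1 \leqslant \delta N} Y_{t_1} > \frac{\lambda}{2} \right\} + P(G^c),
\end{align*}
and the first term is bounded directly by Lemma~\ref{lem:B2}.

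For the second term I would use a union bound over $t_1$ together with Lemma~\ref{lem:B1}. For each $t_1$, $\bar{\varepsilon}_{t_1}$ is a normalized sum of the $N - t_1 + 1 \geqslant (1 - \delta) N$ independent, zero-mean, sub-exponential variables $\varepsilon_{t_1}, \dots, \varepsilon_N$; taking $\ve{\alpha}$ with entries $(N - t_1 + 1)^{-1}$ on $\{ t_1, \dots, N \}$ (so that $\| \ve{\alpha} \|_2^2 = \| \ve{\alpha} \|_\infty = (N - t_1 + 1)^{-1}$), Lemma~\ref{lem:B1} yields $P\{ \bar{\varepsilon}_{t_1} < -\lambda/2 \} \leqslant 2 \exp[ -\tfrac{c}{4} (1 - \delta) N \min( \lambda^2/\kappa^2, \lambda/\kappa ) ]$, and summing over the at most $\delta N$ admissible $t_1$ controls $P(G^c)$. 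Combining the two contributions and invoking the hypothesis $\delta \leqslant 1/2$ (so $1/2 \leqslant 1 - \delta < 1$), each of the two exponents is of order $N \min( \lambda^2/\kappa^2, \lambda/\kappa )$ and is therefore bounded below by an absolute constant times $\tfrac{\delta N}{1 - \delta} \min( \lambda^2/\kappa^2, \lambda/\kappa )$; this consolidates everything into the single rate $\exp[ -c\, \tfrac{\delta N}{1 - \delta} \min( \lambda^2/\kappa^2, \lambda/\kappa ) ]$ of the statement, the polynomial prefactor $(1 - \delta) N + 1$ coming from the union bounds.

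The main obstacle, and the reason one cannot simply substitute a shifted noise vector into Lemma~\ref{lem:B2}, is that the recentering term $\bar{\varepsilon}_{t_1}$ is strongly correlated with $Y_{t_1}$ (both are built from the same noise sample), so it does not act as a deterministic shift; indeed, a single unusually negative $\bar{\varepsilon}_{t_1}$ could by itself make the centered maximum exceed $\lambda$ even when $Y_{t_1}$ is moderate. The good/bad decomposition is exactly what decouples these two effects, at the cost of a union bound, and it is the restriction $\delta \leqslant 1/2$ that keeps the ``bad'' exponent of order $(1 - \delta) N$ comparable to the ``good'' exponent delivered by Lemma~\ref{lem:B2}, so that both collapse to the claimed rate $\tfrac{\delta N}{1 - \delta}$.
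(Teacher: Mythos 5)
Your proof is correct, but it takes a genuinely different route from the paper's. The paper does not invoke Lemma~\ref{lem:B2} as a black box: it re-runs that lemma's argument (time reversal plus a union bound over the location of the minimum of the walk $\sum_k (\tilde{\varepsilon}_k - \lambda)$) with the centered increments, and at the last step recognizes each centered average $\frac{1}{t}\sum_{k=1}^{t}(\varepsilon_k - \bar{\varepsilon})$ as a \emph{single} deterministic linear functional $\ve{\alpha}_t^T \ve{\varepsilon}$, where $\ve{\alpha}_t$ has $t$ entries equal to $t^{-1}-N^{-1}$ and $N-t$ entries equal to $-N^{-1}$. The correlation between the running average and the recentering term --- precisely the obstacle you identify --- is therefore handled \emph{inside} Lemma~\ref{lem:B1}, through $\|\ve{\alpha}_t\|_2^2 = t^{-1}-N^{-1}$ and $\|\ve{\alpha}_t\|_\infty = \max(t^{-1}-N^{-1},\,N^{-1})$; evaluating these at the worst case $t = \delta N$ is exactly what produces the stated exponent $\delta N/(1-\delta)$ and prefactor $2[(1-\delta)N+1]$, and is where the paper uses $\delta \leqslant 1/2$. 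Your good/bad split at level $\lambda/2$ buys modularity (Lemma~\ref{lem:B2} is reused untouched, with no fluctuation-theoretic bookkeeping repeated), and for small $\delta$ it even yields a \emph{stronger} exponent, of order $(1-\delta)N$ rather than $\delta N/(1-\delta)$. What it costs is constants: the $\lambda/2$ split loses a factor of $4$ in the quadratic branch of the $\min$, and your two union bounds add up to a prefactor $2(1-\delta)N + 2\delta N = 2N$ rather than $2[(1-\delta)N+1]$. Since $2N \leqslant 4[(1-\delta)N+1]$ when $\delta \leqslant 1/2$, you can still land on the literal statement by shrinking the absolute constant $c$ (absorb the factor $\leqslant 4$ into the exponential when the exponent exceeds a fixed threshold, and note that otherwise the right-hand side exceeds $1$ and the bound is vacuous); this consolidation is routine, but it is the one step you gesture at (``consolidates everything into the single rate'') rather than carry out, and it deserves an explicit sentence.
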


\begin{proof}
The proof of Lemma~\ref{lem:B2} carries over until \eqref{eq:42}, which changes to
\begin{align*}
1 - Q %
&= P\left\{ \min\limits_{(1 - \delta) N + 1 \leqslant \tilde{t}_1 \leqslant N} S(1, \tilde{t}_1) > \min\limits_{0 \leqslant \tilde{t}_2 \leqslant (1 - \delta) N} S(1, \tilde{t}_2) \right\} \\
&\leqslant \sum\limits_{t = 0}^{(1 - \delta) N} P\left\{ S(1,t) < \min\limits_{(1 - \delta) N + 1 \leqslant \tilde{t}\leqslant N} S(1,\tilde{t}) \right\} \\
&\leqslant \sum\limits_{t = 0}^{(1 - \delta) N} P\left\{ \min\limits_{(1 - \delta) N + 1 \leqslant \tilde{t} \leqslant N} S(t + 1,\tilde{t}) > 0 \right\} \\
&\leqslant \sum\limits_{t = 0}^{(1 - \delta) N} P\left\{ \min\limits_{(1 - \delta) N + 1 \leqslant \tilde{t} \leqslant N} \sum\limits_{k = t + 1}^{\tilde{t}} (\tilde{\varepsilon}_k - \tilde{\bar{\varepsilon}}_{\tilde{t}} - \lambda) > 0 \right\} \\
&= \sum\limits_{t = 0}^{(1 - \delta) N} P\left\{ \sum\limits_{k = t + 1}^{\tilde{t}} (\tilde{\varepsilon}_k - \tilde{\bar{\varepsilon}}_{\tilde{t}} - \lambda) > 0, \; \forall (1 - \delta) N + 1 \leqslant \tilde{t} \leqslant N \right\} \\
&\leqslant \sum\limits_{t = 0}^{(1 - \delta) N} P\left\{ \frac{1}{N - t} \sum\limits_{k = t + 1}^N (\tilde{\varepsilon}_k - \tilde{\bar{\varepsilon}}_N) > \lambda \right\} \\
&= \sum\limits_{t = 0}^{(1 - \delta) N} P\left\{ \frac{1}{N - t} \sum\limits_{k = 1}^{N - t} (\varepsilon_k - \tilde{\bar{\varepsilon}}_N) > \lambda \right\} \\
&= \sum\limits_{t = \delta N}^N P\left\{ \frac{1}{t} \sum\limits_{k = 1}^t (\varepsilon_k - \tilde{\bar{\varepsilon}}_N) > \lambda \right\},
\end{align*}
where $\tilde{\varepsilon}_t := \varepsilon_{N + 1 - t}$, $\tilde{\bar{\varepsilon}}_{\tilde{t}} := \tilde t^{-1} \sum\nolimits_{t = 1}^{\tilde{t}} \tilde{\varepsilon}_t$, and $S(a, b) := \sum\nolimits_{k = a}^b (\tilde{\varepsilon}_k - \lambda)$. The last expression can also be written as
\begin{align*}
1 - Q \leqslant \sum\limits_{t = \delta N}^N P\left\{ \ve{\alpha}_t^T \ve{\varepsilon} > \lambda \right\},
\end{align*}
with $\ve{\alpha}_t^T = [(t^{-1} - N^{-1}) \; \cdots \; (t^{-1} - N^{-1}) \;\ -N^{-1} \; \cdots \; -N^{-1}] \in \mathbb{R}^N$. Therefore, by Lemma~\ref{lem:B1} and the assumption that $\delta \leqslant 1 / 2$,
\begin{align*}
1 - Q %
&\leqslant \sum\limits_{t = \delta N}^N 2\exp \left[ -c \min \left( \frac{\lambda^2}{\kappa^2 \|\ve{\alpha}_t\|_2^2}, \frac{\lambda}{\kappa \|\ve{\alpha}_t\|_\infty} \right) \right] \\
&= 2 \sum\limits_{t = \delta N}^N \exp \left[ -c \min \left( \frac{\lambda^2}{\kappa^2 \left( \frac{1}{t} - \frac{1}{N} \right)},\frac{\lambda }{\kappa \max \left[ \left( \frac{1}{t} - \frac{1}{N} \right),\frac{1}{N} \right]} \right) \right] \\
&= 2 (N - \delta N + 1) \exp \left[ -c \min \left( \frac{\lambda^2}{\kappa^2 \left( \frac{1}{\delta N} - \frac{1}{N} \right)}, \frac{\lambda}{\kappa \max \left[ \left( \frac{1}{\delta N} - \frac{1}{N} \right), \frac{1}{N} \right]} \right) \right] \\
&= 2 [(1 - \delta) N + 1] \exp \left[ -\frac{c \delta N}{1 - \delta} \min \left( \frac{\lambda^2}{\kappa^2}, \frac{\lambda}{\kappa} \right) \right].
\end{align*}
This concludes the proof.
\end{proof}

\subsection{Proof of Lemma~\ref{lem:1}}
Problem~\eqref{eq:2} can be expressed as
\begin{align} \label{eq:30}
\begin{array}{cl}
\min\limits_{\begin{subarray}{l}
\eta_1, \ldots, \eta_N \\
\tau_2, \ldots, \tau_N
\end{subarray}} & \displaystyle -\frac{1}{2} \sum\limits_{t = 1}^N \ln(-\eta_t) - \sum\limits_{t = 1}^N \eta_t y_t^2 + \lambda \sum\limits_{t = 2}^N \tau_t \\
\text{s.t.} & \eta_t < 0, \quad t = 1, \ldots, N \\
               & \eta_t - \eta_{t - 1} \leqslant \tau_t, \quad t = 2, \ldots, N \\
               & \eta_{t - 1} - \eta_t \leqslant \tau_t, \quad t = 2, \ldots, N.
\end{array}
\end{align}
The Lagrangian function of \eqref{eq:30} is
\begin{align*}
&L(\eta_1, \ldots, \eta_N, \tau_2, \ldots, \tau_N, \mu_2^1, \ldots,\mu_N^1, \mu _2^2, \ldots, \mu_N^2) \\
&= \sum\limits_{t = 1}^N \left[ -\eta_t y_t^2 - \frac{1}{2} \ln(-\eta_t) \right] + \sum\limits_{t = 2}^N [\lambda \tau_t + \mu_t^1(\eta_t - \eta_{t - 1} - \tau_t) + \mu_t^2(\eta_{t - 1} - \eta_t - \tau_t)].
\end{align*}
Notice that we have not included Lagrange multipliers associated with the constraints $\eta_t < 0$, since the optimal solutions of \eqref{eq:30} cannot satisfy $\eta_t = 0$ (otherwise the cost would be infinite, due to the logarithms $\ln(-\eta_t)$). Therefore, the KKT conditions associated with \eqref{eq:30} are
\begin{align} \label{eq:31}
-y_1^2 - \frac{1}{2 \eta_1} - \mu_2^1 + \mu_2^2 &= 0 \nonumber \\
-y_t^2 - \frac{1}{2 \eta_t} + \mu_t^1 - \mu_{t + 1}^1 - \mu_t^2 + \mu_{t + 1}^2 &= 0, \quad t = 2, \ldots, N - 1 \nonumber \\
-y_N^2 - \frac{1}{2 \eta_N} + \mu_N^1 - \mu_N^2 &= 0 \nonumber \\
\lambda - \mu_t^1 - \mu_t^2 &= 0, \quad t = 2, \ldots, N \nonumber \\
\eta_t &< 0, \quad t = 1, \ldots, N \nonumber \\
\eta_t - \eta_{t - 1} &\leqslant \tau_t, \quad t = 2, \ldots, N \\
\eta_{t - 1} - \eta_t &\leqslant \tau_t, \quad t = 2, \ldots, N \nonumber \\
\mu_t^1 &\geqslant 0, \quad t = 2, \ldots, N \nonumber \\
\mu_t^2 &\geqslant 0, \quad t = 2, \ldots, N \nonumber \\
\mu_t^1(\eta_t - \eta_{t - 1} - \tau_t) &= 0, \quad t = 2, \ldots, N \nonumber \\
\mu_t^2 (\eta_{t - 1} - \eta_t - \tau_t) &= 0, \quad t = 2, \ldots, N. \nonumber
\end{align}
From the fourth set of conditions, we have that $\mu_t^2 = \lambda - \mu_t^1$ for $t = 2, \ldots, N$. In addition, from \eqref{eq:30} it can be seen that equality has to be achieved for each $\tau_t$ either for the sixth or seventh set of conditions in \eqref{eq:31}. Hence, \eqref{eq:31} can be simplified to
\begin{align} \label{eq:32}
-y_1^2 - \frac{1}{{2{\eta _1}}} + \lambda  - 2\mu _2^1 &= 0 \nonumber \\
- y_t^2 - \frac{1}{{2{\eta _t}}} + 2\mu _t^1 - 2\mu _{t + 1}^1 &= 0,\quad t = 2, \ldots ,N - 1 \nonumber \\
- y_N^2 - \frac{1}{{2{\eta _N}}} + 2\mu _N^1 - \lambda &= 0 \nonumber \\
{\eta _t} &< 0,\quad t = 1, \ldots ,N \\
\left| {{\eta _t} - {\eta _{t - 1}}} \right| &= {\tau _t},\quad t = 2, \ldots ,N \nonumber \\
\mu _t^1 &\in [0,\lambda ],\quad t = 2, \ldots ,N \nonumber \\
\mu _t^1({\eta _t} - {\eta _{t - 1}} - {\tau _t}) &= 0,\quad t = 2, \ldots ,N  \nonumber \\
(\lambda  - \mu _t^1)({\eta _{t - 1}} - {\eta _t} - {\tau _t}) &= 0,\quad t = 2, \ldots ,N. \nonumber
\end{align}
Now, from the last three sets of conditions in \eqref{eq:32} it follows that
\begin{align} \label{eq:33}
\mu_t^1 & \left\{ \begin{array}{ll}
= 0, & \text{if  } \eta_t < \eta_{t - 1} \\
= \lambda, & \text{if  } \eta_t > \eta_{t - 1} \\
\in [0, \lambda], & \text{if  } \eta_t = \eta_{t - 1}.
\end{array} \right.
\end{align}
If we let $\tilde{\mu}_t^1 = 2 \mu_t^1 - \lambda$ for all $t = 2, \dots, N$, conditions \eqref{eq:32} can be posed as
\begin{align*}
-y_1^2 - \frac{1}{2 \eta_1} &= \tilde{\mu}_2^1 \\
-y_t^2 - \frac{1}{2 \eta_t} &= \tilde{\mu}_{t + 1}^1 - \tilde{\mu}_t^1, \quad t = 2, \ldots, N - 1 \\
-y_N^2 - \frac{1}{2 \eta_N} &= -\tilde{\mu}_N^1 \\
\eta_t &< 0, \quad t = 1, \ldots, N,
\end{align*}
subject to \eqref{eq:33}. These conditions correspond to \eqref{eq:4}, which concludes the proof. \qed

\subsection{Proof of Lemma~\ref{lem:2}}
This lemma follows by comparing Lemma~\ref{lem:1} with \citep{Rinaldo-09}[equation (2.1) + Lemma A.1]. However, according to Lemma~\ref{lem:1}, the solution of problem~\eqref{eq:2} is required to satisfy $\ve{\sigma} > 0$, hence we need to show that the solution of \eqref{eq:6} necessarily satisfies $\ve{\sigma} > 0$. To this end, let us assume the opposite, \ie let $S := \{t \in \{2, \ldots, N\}:\;\sigma_t^2 \ne \sigma_{t - 1}^2\} \ne \emptyset$. Consider the vector   $\sigma^+ \in \mathbb{R}^N$ given by $(\sigma^+)_t = \max\{\sigma_t, 0\}$ for $t = 1, \dots, N$. It is easy to see that the cost in \eqref{eq:6} for $\ve{\sigma}^+$ is strictly lower than for $\ve{\sigma}$, which contradicts the optimality of the latter. This concludes the proof. \qed


\bibliographystyle{acmtrans-ims}
\bibliography{cristian,refs_bo}

\end{document}